\pdfoutput=1
\documentclass{amsart}
\usepackage{todonotes}
\usepackage[T1]{fontenc}
\usepackage[utf8]{inputenc}
\usepackage{lmodern}
\usepackage{tikz-cd}
\usepackage{quiver}
\usepackage{amssymb,amsmath,amsthm,mathrsfs,mathtools}
\usepackage[dvipsnames]{xcolor}
\usepackage[unicode,colorlinks]{hyperref}
\hypersetup{
  linkcolor=BrickRed,
  citecolor=Green,
  urlcolor=NavyBlue
}
\usepackage[nameinlink,capitalise,noabbrev]{cleveref}

\newcommand{\RadId}{\mathrm{RadId}}
\newcommand{\fp}{\mathfrak{p}}
\newcommand{\Frm}{\mathbf{Frm}}

\newcommand{\pt}{\mathrm{pt}}

\newcommand{\Nak}{\mathbf{Nak}}
\newcommand{\bbZ}{\mathbb{Z}}

\DeclareMathOperator{\Spec}{Spec}
\DeclareMathOperator{\Hom}{Hom}

\newcommand{\RadFg}{\mathrm{RadFg}}

\newcommand{\op}{\mathrm{op}}

\AddToHook{env/Thm/begin}{     \crefalias{equation}{Thm}}
\AddToHook{env/Exa/begin}{     \crefalias{equation}{Exa}}
\AddToHook{env/Prop/begin}{    \crefalias{equation}{Prop}}
\AddToHook{env/Lem/begin}{     \crefalias{equation}{Lem}}
\AddToHook{env/Cor/begin}{     \crefalias{equation}{Cor}}
\AddToHook{env/Conj/begin}{    \crefalias{equation}{Conj}}
\AddToHook{env/Rem/begin}{     \crefalias{equation}{Rem}}
\AddToHook{env/Def/begin}{     \crefalias{equation}{Def}}

\numberwithin{equation}{section}
\swapnumbers

\newtheorem{Thm}[equation]{Theorem}
\newtheorem*{Thm*}{Theorem}
\newtheorem{Prop}[equation]{Proposition}
\newtheorem{Lem}[equation]{Lemma}
\newtheorem{Cor}[equation]{Corollary}

\theoremstyle{remark}
\newtheorem{Def}[equation]{Definition}
\newtheorem{Not}[equation]{Notation}
\newtheorem{Exa}[equation]{Example}
\newtheorem{Rem}[equation]{Remark}

\Crefname{Thm}{Theorem}{Theorems}
\Crefname{Prop}{Proposition}{Propositions}
\Crefname{Def}{Definition}{Definitions}
\Crefname{Lem}{Lemma}{Lemmas}
\Crefname{Cor}{Corollary}{Corollaries}
\Crefname{Rem}{Remark}{Remarks}
\Crefname{Exa}{Example}{Examples}

\title[The Nakaoka spectrum, point-free]{A point-free approach to the Nakaoka spectrum of a Tambara functor}
\date{\today}
\author{Drew Heard }

\address{Drew Heard, Department of Mathematical Sciences, Norwegian University of Science and Technology, Trondheim}
\email{drew.k.heard@ntnu.no}
\urladdr{\href{https://folk.ntnu.no/drewkh/}{https://folk.ntnu.no/drewkh/}}
\begin{document}

\begin{abstract}
For $G$ a finite group and $T$ a $G$-Tambara functor, we construct the frame
$\RadId_G(T)$ of radical Tambara ideals and show that its points are the Nakaoka primes. We show that this frame is spatial and coherent,  and deduce that the Nakaoka spectrum is a spectral space, recovering a recent result of Chan and Spitz.
\end{abstract}

\maketitle

\tableofcontents

\section{Introduction}\label{sec:intro}

The Zariski spectrum of a commutative ring is one of the foundational
constructions of algebraic geometry. An elegant approach, due to Joyal
\cite{Joyal1975ChevalleyTarski}, constructs the spectrum entirely within the framework of point-free topology. To do so, one builds the frame $\RadId(R)$ of radical
ideals, observes that its points are exactly the prime ideals, and deduces
spatiality and spectrality from the lattice-theoretic structure. The purpose of
this paper is to carry out the analogous construction for the Nakaoka spectrum of
a $G$-Tambara functor.

Tambara functors were introduced by Tambara \cite{Tambara1993Multiplicative} as
the natural algebraic structure encoding the interaction of restriction,
transfer, and norm maps in equivariant algebra. They play the role of commutative
rings in $G$-equivariant homotopy theory; just as $\pi_0$ of a commutative ring
spectrum is a commutative ring, $\pi_0$ of a genuine $G$-commutative ring
spectrum is a Tambara functor
\cite{HillMazur2019Equivariant,strickland2012tambarafunctors}. 

The development of an ``equivariant commutative algebra'' of Tambara functors,
including their ideal theory, prime spectra, and associated geometry, is a
subject of active current interest
\cite{Nakaoka2012Ideals,chan2025tambaraaffineline,ChanSpitz2026RadicalsNilpotents}.
One motivation for a point-free approach is the prospect of an enhanced Balmer
spectrum for categories of $G$-spectra that incorporates norm functors and hence
Tambara structure; the frame-theoretic methods developed here may serve as an
algebraic model for such a construction.

The prime spectrum of a Tambara functor was introduced by Nakaoka
\cite{Nakaoka2012Ideals}. A proper Tambara ideal $P\trianglelefteq T$ is \emph{prime}
if $IJ\subseteq P$ implies $I\subseteq P$ or $J\subseteq P$ for all ideals
$I,J$, and the Nakaoka spectrum $\Spec_{\Nak}(T)$ is the set of primes equipped
with a Zariski-type topology. 

Our main results are as follows.

\begin{Thm*}[\Cref{thm:points-primes,thm:spatiality,cor:frame-opens}]
Let $T$ be a $G$-Tambara functor. The frame $\RadId_G(T)$ of radical Tambara
ideals is spatial, and there is a canonical homeomorphism
\[
\pt\bigl(\RadId_G(T)\bigr)\cong\Spec_{\Nak}(T).
\]
In particular, $\RadId_G(T)\cong\Omega(\Spec_{\Nak}(T))$.
\end{Thm*}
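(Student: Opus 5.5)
The plan is to follow Joyal's strategy, working throughout with Tambara ideals in Nakaoka's sense, their products $IJ$, and the radical $\sqrt{I}$. First I will define a \emph{radical} Tambara ideal as one equal to the intersection of the Nakaoka primes containing it. Equivalently, and preferably via an internal characterisation in the style of Chan--Spitz, it is an ideal $I$ such that $J^n\subseteq I$ implies $J\subseteq I$ for every ideal $J$. The set $\RadId_G(T)$ is closed under arbitrary intersections, so it is a complete lattice. The meet is intersection and the join is $\bigvee I_\alpha=\sqrt{\sum I_\alpha}$.

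To show $\RadId_G(T)$ is a frame, I must check the infinite distributive law
\[
I\cap \sqrt{\textstyle\sum J_\alpha}=\sqrt{\textstyle\sum (I\cap J_\alpha)}.
\]
The key identity is $\sqrt{I}\cap\sqrt{J}=\sqrt{IJ}=\sqrt{I\cap J}$, and it reduces to $IJ\subseteq I\cap J$ together with $(I\cap J)^2\subseteq IJ$. Distributivity then follows from
\[
I\cdot \textstyle\sum J_\alpha=\sum IJ_\alpha ,
\]
using that the ideal product distributes over sums of ideals. I would verify this with Nakaoka's explicit description of the product ideal, checking that norms of sums land in the right ideal via the Tambara reciprocity formula. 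This is the main technical obstacle: norms are not additive, so one must check that $N(a+b)$ decomposes into transfers of products of norms and restrictions. I expect it is handled by Nakaoka's lemma describing the ideal generated by a product.

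Next come points. A point is a completely prime filter, or equivalently a prime element $P\neq T$ of the frame, meaning $I\cap J\subseteq P$ implies $I\subseteq P$ or $J\subseteq P$. A Nakaoka prime is radical, and it is frame-prime because $IJ\subseteq I\cap J$. Conversely, a frame-prime radical $P$ is Nakaoka prime: if $IJ\subseteq P$ then $\sqrt{I}\cap\sqrt{J}=\sqrt{IJ}\subseteq P$, so $I\subseteq P$ or $J\subseteq P$. The bijection identifies the basic opens $\{P: I\not\subseteq P\}$ of the point space with the Zariski opens $D(I)$, which gives the homeomorphism.

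Spatiality amounts to showing that every radical ideal is an intersection of frame-prime ones. If radicality is defined via primes, this is immediate. If it is defined internally, I need a Krull-type lemma: if $J\not\subseteq \sqrt{I}$, the ideals containing $I$ and avoiding all $J^n$ form an inductive family, and Zorn gives a maximal one, which is prime. Here I will use that finitely generated ideals are compact, since $T$ is finite over the finitely many orbits $G/H$. With spatiality in hand, $\RadId_G(T)\cong\Omega(\pt\RadId_G(T))\cong\Omega(\Spec_{\Nak}(T))$.
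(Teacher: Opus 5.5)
Your proposal is correct, but it is organised differently from the paper's proof.

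\textbf{The paper's route.} The paper leans on Chan--Spitz (\Cref{prop:levelwise-radical}) at every step:
\begin{itemize}
\item the levelwise formula for $\sqrt{I}$ gives $\sqrt{IJ}=\sqrt{I}\cap\sqrt{J}$ by reduction to \Cref{lem:ring-radical-product};
\item the same formula gives frame distributivity by reduction to the frames $\RadId(T(G/H))$;
\item the formula $\sqrt{J}=\bigcap_{P\supseteq J}P$ gives spatiality;
\item meet-prime $\Rightarrow$ Nakaoka-prime goes through the $\mathcal{Q}$-criterion (\Cref{thm:Q-characterization-prime}) applied to principal ideals.
\end{itemize}

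\textbf{Your route.} You never use the levelwise description. You get $\sqrt{IJ}=\sqrt{I}\cap\sqrt{J}$ from primality. You get distributivity by transcribing the ring proof via $I\cdot\sum J_\alpha=\sum IJ_\alpha$. You prove the meet-prime direction directly for arbitrary ideals $I,J$, and you get spatiality from a Krull--Zorn argument.

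\textbf{Comparison.} Your route shows that this theorem needs only the intersection-of-primes half of Chan--Spitz, and that half is the classical Zorn argument. The paper's route is shorter once the citation is granted, and it reuses the same levelwise reduction for compactness in \Cref{lem:radfg-compact}. Note that the Krull lemma is not optional for you. Even if you define radical ideals as intersections of primes, you still have to identify your $\RadId_G(T)$ with the one in the statement, which is defined via Nakaoka's radical. That identification is exactly the claim $\sqrt{I}=\bigcap_{P\supseteq I}P$.

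\textbf{Details to tighten.}
\begin{itemize}
\item Run Zorn with a principal ideal $\langle x\rangle_H$, for some $x\in J(G/H)\setminus\sqrt{I}(G/H)$, rather than with $J$ itself. The chain argument needs $\langle x\rangle^n$ to be finitely generated. This holds by \Cref{cor:fg-product}: there are finitely many generalized products because $G$ is finite, not because of any finiteness of $T$. By contrast, $J^n$ need not be finitely generated.
\item The identity $I\cdot\sum J_\alpha=\sum IJ_\alpha$ is formal from \Cref{def:tambara-product} once levelwise sums are known to be Tambara ideals (\Cref{rem:generated-ideal}). No separate reciprocity computation is needed.
\item The inclusions $IJ\subseteq I\cap J$ and $(I\cap J)^2\subseteq IJ$ only give $\sqrt{IJ}=\sqrt{I\cap J}$. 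For $\sqrt{I}\cap\sqrt{J}\subseteq\sqrt{IJ}$ you need either primes (a prime containing $IJ$ contains $I$ or $J$) or $\langle x\rangle^m\langle x\rangle^n=\langle x\rangle^{m+n}$.
\item Check that the topology given by the sets $D(I)$ agrees with the one generated by the $D_H(x)$ of \Cref{def:nakaoka-topology}. This follows from $D_H(x)=D(\langle x\rangle_H)$ and $D(I)=\bigcup_{H\le G}\bigcup_{x\in I(G/H)}D_H(x)$.
\end{itemize}
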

Moreover, the frame $\RadId_G(T)$ carries enough finiteness to guarantee
spectrality.
\begin{Thm*}[\Cref{thm:coherent,thm:spectrality}]
The frame $\RadId_G(T)$ is coherent, and hence the Nakaoka spectrum $\Spec_{\Nak}(T)$ is a spectral space.
\end{Thm*}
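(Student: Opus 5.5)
The plan is to follow Joyal's strategy for commutative rings, adapted to Tambara ideals. The first task is to pin down the compact elements of $\RadId_G(T)$.

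For a finite collection of elements $a_i\in T(G/H_i)$, let $\sqrt{\langle a_1,\dots,a_n\rangle}$ be the radical of the Tambara ideal they generate. I will show that these finitely generated radical ideals are exactly the compact elements of $\RadId_G(T)$. There are two reasons this should work:
\begin{itemize}
\item Joins in $\RadId_G(T)$ are computed as $\bigvee_\alpha I_\alpha=\sqrt{\sum_\alpha I_\alpha}$.
\item Membership of $x$ in $\sqrt{J}$ should be witnessed by finitely many elements of $J$. The radical is presumably built by iterating norms, restrictions, transfers and powers, and any single witness uses only finitely many generators.
\end{itemize}
Since every radical ideal is the directed join of the finitely generated radicals below it, the frame is algebraic.

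The real content of coherence is that the compact elements form a sublattice. The top element is $T$ itself, which is $\langle 1\rangle$ with $1\in T(G/G)$, so it is compact. For binary meets, I would prove the identity $\sqrt{I}\cap\sqrt{J}=\sqrt{IJ}$, where $IJ$ is the Tambara product ideal. This identity is exactly what makes $\RadId_G(T)$ a frame in the first place, so I may assume it was established when the frame was constructed.

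It then remains to show that the product of two finitely generated Tambara ideals is again finitely generated. This is the step I expect to be the main obstacle. The Tambara ideal generated by a set involves restrictions, transfers and, crucially, norms, and the product ideal $IJ$ is generated levelwise by products of elements at common levels. My approach would be as follows:
\begin{itemize}
\item Restrict generators to all subgroups. This gives finitely many elements, since $G$ is finite.
\item Form all pairwise products $\mathrm{res}(a_i)\cdot\mathrm{res}(b_j)$ at every level $G/K$, together with their conjugates.
\item Show that these finitely many elements generate an ideal $K_0$ with $\sqrt{K_0}=\sqrt{IJ}$.
\end{itemize}
For the last point, a general element of $IJ$ has the form $\mathrm{tr}(x)\cdot\mathrm{tr}(y)$, or involves norms of sums. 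Frobenius reciprocity $\mathrm{tr}(x)\,y=\mathrm{tr}(x\,\mathrm{res}\,y)$ reduces transfers of products to transfers of the listed generators. Norms are handled by working up to radical: $\mathrm{nm}(a)\in\langle a\rangle$, and the Tambara formulas for norms of sums expand into transfers of products of conjugates. So only finitely many restriction/conjugation data at finitely many subgroups are needed. If the direct generation argument proves messy, the fallback is to verify only $\sqrt{IJ}\subseteq\sqrt{K_0}$ elementwise, testing against the Nakaoka primes via the points theorem. I would avoid that if possible to keep the argument point-free.

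Once coherence is established, spectrality follows formally. A coherent frame is spatial by the coherent form of the Stone/Joyal representation, which uses the prime ideal theorem. Its space of points is spectral, with the compact opens corresponding to the compact elements, by Hochster–Joyal duality between coherent frames and spectral spaces. Combining this with the identification $\pt(\RadId_G(T))\cong\Spec_{\Nak}(T)$ from \Cref{thm:points-primes}, I conclude that $\Spec_{\Nak}(T)$ is spectral. Its quasi-compact opens are the basic opens $D(a_1,\dots,a_n)$, and these are closed under finite intersection by the meet computation above.
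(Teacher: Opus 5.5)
\textbf{The gap is in the product step, the one you flagged as the main obstacle.} Your candidate $K_0$ is generated by products of restrictions (and conjugates) of the $a_i$ and $b_j$ to common subgroups. This $K_0$ does not satisfy $\sqrt{K_0}=\sqrt{IJ}$, and neither Frobenius reciprocity nor ``working up to radical'' repairs it. The reason is that $IJ$ contains products in which one generator is \emph{normed up} to a level where the other generator lives, and such a product need not be a radical consequence of products of restrictions.

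Here is a concrete counterexample. Take $G=C_2$, $T=A_{C_2}$ the Burnside Tambara functor, $a=t-2\in A(C_2)$ and $b=1\in T(C_2/e)=\mathbb{Z}$.
\begin{enumerate}
\item The only common level is $C_2/e$, where $r^{C_2}_e(a)\cdot b=2-2=0$. Hence $K_0=0$, and $\sqrt{K_0}=\mathfrak{N}(A_{C_2})=0$, since both $A(C_2)$ and $\mathbb{Z}$ are reduced.
\item On the other hand, $N^{C_2}_e(1)=1$ gives $\langle b\rangle=T$. So $\langle a\rangle\langle b\rangle=\langle a\rangle\ni t-2\neq 0$.
\end{enumerate}
The generator you lose is the generalized product $a\cdot N^{C_2}_e(b)$. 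Your fallback of testing against Nakaoka primes cannot rescue this, because the inclusion $\sqrt{IJ}\subseteq\sqrt{K_0}$ is simply false.

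\textbf{The missing ingredient is Nakaoka's criterion} (\Cref{prop:product-criterion-Q}). It says $\langle a\rangle\langle b\rangle$ is generated by the generalized products $\mu\cdot\nu\in T(G/L)$, where $\mu,\nu$ are multiplicative translates $N^L_{gKg^{-1}}\bigl(c_{g,K}(r^H_K(-))\bigr)$. These translates involve norms as well as restrictions and conjugations. There are only finitely many of them because $G$ is finite (\Cref{cor:product-generators,rem:finitely-many-translates}). With this, $IJ=\sum_{i,j}\langle x_i\rangle\langle y_j\rangle$ is finitely generated on the nose (\Cref{cor:fg-product}). Then $\sqrt{I}\cap\sqrt{J}=\sqrt{IJ}$ gives closure of compacts under binary meets, as you intended.

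\textbf{Your compactness step also needs a precise input.} As written it rests on a guess about how the radical is built. What you need is \Cref{prop:levelwise-radical} (Chan--Spitz): $x\in\sqrt{\sum_\lambda I_\lambda}(G/H)$ if and only if $x^n\in\sum_\lambda I_\lambda(G/H)$ for some $n$. An element of a levelwise sum lies in a finite subsum, which gives compactness.

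The rest of your outline matches the paper: generation by the $\sqrt{\langle x\rangle_H}$, compactness of $T=\sqrt{\langle 1\rangle_G}$, and the passage to spectrality via \Cref{prop:coherent-spectral,thm:points-primes}.
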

Spectrality of $\Spec_{\Nak}(T)$ was independently established by Chan and
Spitz \cite{ChanSpitz2026RadicalsNilpotents} by studying radicals and
nilpotents. Our approach is complementary and proceeds through the
lattice-theoretic structure of $\RadId_G(T)$ and gives, as a by-product, the
explicit identification of the compact opens with radical finitely generated
Tambara ideals (\Cref{rem:compact-opens}). We note, however, that we crucially
use a result of Chan and Spitz \cite[Corollary~4.7]{ChanSpitz2026RadicalsNilpotents},
namely that the radical of a Tambara ideal can be computed levelwise.

As further applications of the frame-theoretic perspective we
develop some commutative algebra of Tambara functors, for example, nilradicals, reductions, coprime decompositions, and irreducibility. 

\subsection*{Conventions}
Throughout, $G$ denotes a finite group. By ``ring'' we mean commutative ring with unit. We write $\sqrt{I}$ for the radical of an ideal (in a ring or in a Tambara functor, as context dictates).

\section{Frames and point-free topology}\label{sec:frames}

We begin with a review of point-free topology. Nothing in this section is new;
detailed treatments can be found in
\cite{Johnstone1986Stone,PicadoPultr2012Frames}.

\begin{Def}\label{def:frame}
A \emph{frame} is a complete lattice $A$ in which finite meets distribute over
arbitrary joins: for all $a\in A$ and $\{b_i\}_{i\in I}\subseteq A$,
\[
a\wedge \Big(\bigvee_{i\in I} b_i\Big)
=\bigvee_{i\in I}(a\wedge b_i).
\]
We write $\top$ and $\bot$ for the top and bottom elements.
\end{Def}

\begin{Def}\label{def:frame-hom}
A \emph{frame homomorphism} $\varphi\colon A\to B$ is a map preserving all joins
and finite meets, i.e., 
\[
\varphi\Big(\bigvee_{i\in I} a_i\Big)=\bigvee_{i\in I}\varphi(a_i),
\qquad
\varphi(a\wedge b)=\varphi(a)\wedge\varphi(b),
\qquad
\varphi(\top)=\top.
\]
Frames and frame homomorphisms form a category $\Frm$.
\end{Def}

\begin{Exa}\label{exa:opens-frame}
For a topological space $X$, the poset $\Omega(X)$ of open subsets is a frame
with joins given by unions and meets by intersections. A continuous map
$f\colon X\to Y$ induces a frame homomorphism
$f^{-1}\colon \Omega(Y)\to \Omega(X)$, so $\Omega$ defines a functor
$\mathbf{Top}\to\Frm^{\op}$.
\end{Exa}

\begin{Def}\label{def:pt-frame}
Let $2=\{0<1\}$ be the two-element frame. A \emph{point} of a frame $A$ is a
frame homomorphism $p\colon A\to 2$. The \emph{spectrum} (or \emph{point space})
of $A$ is
\[
\pt(A) \coloneqq \Hom_{\Frm}(A,2),
\]
topologised by the basis of opens
$U(a)\coloneqq\{p\in \pt(A)\mid p(a)=1\}$, $a\in A$.
\end{Def}

\begin{Def}\label{def:meet-prime}
Let $L$ be a lattice with top element $\top$. An element $q\neq \top$ is
\emph{meet-prime} if whenever $l\wedge m\le q$ we have $l\le q$ or $m\le q$.
\end{Def}

\begin{Rem}\label{rem:points-meet-prime}
There is a bijection between points of a frame $A$ and meet-prime elements
of $A$. Given a meet-prime $q\neq\top$, the map $p_q\colon A\to 2$ defined by
\[
p_q(a)=
\begin{cases}0 &\text{if } a\le q,\\
1 & \text{otherwise,}
\end{cases}
\]
is a point. Conversely, given a point $p$, the element
$q_p\coloneqq\bigvee\{a\in A\mid p(a)=0\}$ is meet-prime, and these
constructions are mutually inverse.
\end{Rem}

\begin{Rem}\label{rem:adjunction-omega-pt}
There is an adjunction $\Omega:\mathbf{Top}\rightleftarrows \Frm^{\op}:\pt$ with
$\Omega$ left adjoint to $\pt$
\cite[Theorem~4.6.3]{PicadoPultr2012Frames}. The unit at a space $X$ is the
continuous map $\eta_X\colon X\to \pt(\Omega(X))$, $x\mapsto p_x$, where
$p_x(U)=1$ if and only if $x\in U$.
\end{Rem}

\begin{Def}\label{def:spatial-frame}
The assignment $a\mapsto U(a)$ defines a canonical frame homomorphism
$\iota_A\colon A\to \Omega(\pt(A))$. A frame $A$ is \emph{spatial} if
$\iota_A$ is an isomorphism.
\end{Def}

\begin{Prop}\label{prop:spatial-separation}
A frame $A$ is spatial if and only if whenever $m\not\le n$ in $A$, there exists
a point $p\in \pt(A)$ with $p(m)=1$ and $p(n)=0$.
\end{Prop}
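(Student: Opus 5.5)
The plan is to reduce spatiality to the injectivity and order-reflection of $\iota_A$. The point is that $\iota_A$ is always a surjective frame homomorphism onto $\Omega(\pt(A))$: by \Cref{def:pt-frame} the sets $U(a)$ form a basis, and in fact every open is of this form. Indeed, $U$ preserves arbitrary joins and finite meets, because each point $p$ does: $p(\bigvee a_i)=1$ iff $p(a_i)=1$ for some $i$, and $p(a\wedge b)=1$ iff $p(a)=1$ and $p(b)=1$. Hence a union of basic opens $\bigcup_i U(a_i)$ equals $U(\bigvee_i a_i)$, and every open set is $U(a)$ for some $a$. So $\iota_A$ is a surjective frame homomorphism, and it is an isomorphism if and only if it is injective. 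For a frame homomorphism, injectivity is equivalent to order-reflection: if $U(m)\subseteq U(n)$ implies $m\le n$, then $\iota_A$ is injective, and conversely injectivity gives $m\le n$ from $U(m\wedge n)=U(m)\cap U(n)=U(m)$, since then $m\wedge n=m$.

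For the forward direction, suppose $\iota_A$ is an isomorphism and $m\not\le n$. Then $U(m)\not\subseteq U(n)$, for otherwise order-reflection would give $m\le n$. So there is a point $p$ with $p(m)=1$ and $p(n)=0$.

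For the converse, assume the separation property. Suppose $U(m)\subseteq U(n)$, and suppose toward a contradiction that $m\not\le n$. The separation property gives a point $p\in U(m)\setminus U(n)$, which is impossible. Hence $\iota_A$ reflects order, so it is injective, and together with surjectivity it is an isomorphism of frames. A bijective frame homomorphism is an isomorphism because its inverse automatically preserves all joins and meets: order-reflection makes the inverse monotone, so it is an order isomorphism.

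There is no real obstacle here. The only step requiring care is the observation that $U$ preserves arbitrary joins, so that every open of $\pt(A)$ is literally of the form $U(a)$, which gives surjectivity. Everything else is formal. Equivalently, one could phrase the separation in terms of meet-primes via \Cref{rem:points-meet-prime}: the condition becomes the existence of a meet-prime $q$ with $n\le q$ and $m\not\le q$. This is the form later used for $\RadId_G(T)$, but it is not needed for the proof.
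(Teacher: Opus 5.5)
Your proposal is correct and follows the paper's proof. Like the paper, you get surjectivity of $\iota_A$ from $\bigcup_i U(a_i)=U(\bigvee_i a_i)$, reduce injectivity to order-reflection via $U(m)\cap U(n)=U(m\wedge n)$, and let the separation property supply exactly that order-reflection; your extra remarks (that $U$ preserves joins and meets because each point does, and that a bijective frame homomorphism has a monotone inverse) only spell out details the paper leaves implicit.
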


\begin{proof}
The map $\iota_A$ is always surjective, since every open in $\pt(A)$ is a union
of basic opens, and for any family $\{a_i\}_{i\in I}$ we have
\[
\bigcup_{i\in I} U(a_i)=U\Bigl(\bigvee_{i\in I} a_i\Bigr)
=\iota_A\Bigl(\bigvee_{i\in I} a_i\Bigr).
\]

Suppose first that $A$ is spatial, so $\iota_A$ is injective. If $m\not\le n$,
then $U(m)\not\subseteq U(n)$, as otherwise
\[
U(m)\subseteq U(n)\implies U(m)=U(m)\cap U(n)=U(m\wedge n),
\]
and injectivity of $\iota_A$ would imply $m=m\wedge n\le n$, a contradiction.
Thus there exists a point $p\in U(m)\setminus U(n)$, i.e., $p(m)=1$ and
$p(n)=0$.

Conversely, assume that whenever $m\not\le n$ there exists a point $p$ with
$p(m)=1$ and $p(n)=0$. To show that $\iota_A$ is injective, it suffices to show
that it reflects order. If $U(m)\subseteq U(n)$ and $m\not\le n$, then by
assumption there exists $p\in U(m)\setminus U(n)$, a contradiction. Hence
$U(m)\subseteq U(n)$ implies $m\le n$. Since $\iota_A$ is always
order-preserving, it is order-reflecting and therefore injective.
\end{proof}

\begin{Rem}\label{rem:sober-spatial}
A space $X$ is \emph{sober} if the unit $X\to \pt(\Omega(X))$ is a
homeomorphism. The adjunction of \Cref{rem:adjunction-omega-pt} restricts to an
(anti-)equivalence between sober spaces and spatial frames. Sober spaces have a purely
topological description, namely, they are those spaces for which every nonempty
irreducible closed subset is the closure of a unique point.
\end{Rem}

\begin{Def}\label{def:compact-element}
An element $a\in A$ is \emph{compact} if whenever
$a\le \bigvee_{i\in I} b_i$ there exists a finite $J\subseteq I$ with
$a\le \bigvee_{j\in J} b_j$. A frame $A$ is \emph{coherent} if it is generated
under arbitrary joins by its compact elements, the compact elements are closed
under finite meets, and $\top$ is compact.
\end{Def}

\begin{Prop}\label{prop:coherent-spectral}
A coherent frame is spatial, and its point space is spectral in the sense of
Hochster. Conversely, the frame of opens of any spectral space is coherent. This
gives a dual equivalence between coherent frames and spectral spaces.
\end{Prop}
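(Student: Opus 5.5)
The plan is to pass through distributive lattices and prime filters, as in Johnstone's \emph{Stone Spaces}. First, let $A$ be a coherent frame and $K(A)$ its set of compact elements. We will check that $K(A)$ is a bounded sublattice: $\bot$ is compact, finite joins of compacts are compact, and finite meets are compact by hypothesis. The key claim is that $A$ is isomorphic to the frame $\mathrm{Idl}(K(A))$ of lattice ideals, via $a\mapsto\{k\in K(A)\mid k\le a\}$ and $I\mapsto\bigvee I$. The two maps are inverse for the following reasons. Every $a$ is a join of compacts by the generation hypothesis. If $k\le\bigvee I$ with $k$ compact, then $k$ lies below a finite join of elements of $I$, and that join is in $I$.

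Spatiality will follow from \Cref{prop:spatial-separation}. Suppose $m\not\le n$. Since $m$ is a join of compacts, there is a compact $k\le m$ with $k\not\le n$. Let $I=\{c\in K(A)\mid c\le n\}$, which is a lattice ideal, and let $F=\{c\in K(A)\mid c\ge k\}$, which is a filter disjoint from $I$. By Zorn's lemma, enlarge $I$ to an ideal $P$ of $K(A)$ that is maximal among ideals disjoint from $F$. The standard distributivity argument shows $P$ is prime: if $a\wedge b\in P$ but $a,b\notin P$, then both $P\vee{\downarrow}a$ and $P\vee{\downarrow}b$ meet $F$. This gives $k\le p\vee a$ and $k\le p'\vee b$ with $p,p'\in P$, so $k\le (p\vee p')\vee(a\wedge b)\in P$, a contradiction. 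Then $q=\bigvee P$ is meet-prime in $A$. To see this, note that meets in $\mathrm{Idl}(K(A))$ are intersections. So if $x\wedge y\le q$ but $x,y\not\le q$, choose compacts $a\le x$ and $b\le y$ outside $P$; then $a\wedge b\in P$, contradicting primality of $P$. Finally $q\neq\top$ because $k\not\le q$, using $k\notin P$ and the isomorphism. By \Cref{rem:points-meet-prime}, $p_q$ is a point with $p_q(m)=1$ and $p_q(n)\le p_q(q)=0$.

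Next we show $\pt(A)$ is spectral. Points correspond to prime filters of $K(A)$, and the sets $U(k)$ for $k\in K(A)$ form a basis. Each $U(k)$ is compact: a cover $U(k)\subseteq\bigcup U(a_i)$ gives $k\le\bigvee a_i$ by spatiality, hence a finite subcover by compactness of $k$. Taking $k=\top$ shows $\pt(A)$ is quasi-compact, and $U(k)\cap U(k')=U(k\wedge k')$ shows the compact opens are closed under finite intersection. Conversely, any compact open $U(a)$ is a finite union of basic $U(k)$'s, so it equals $U(k)$ for a compact $k$. Sobriety holds because $\pt$ of any frame is sober. Hence $\pt(A)$ is spectral in Hochster's sense.

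For the converse, let $X$ be spectral. The compact opens form a basis closed under finite intersections, $X$ itself is compact, and the compact elements of $\Omega(X)$ are exactly the compact open subsets. So $\Omega(X)$ is coherent. The duality then comes from restricting the equivalence of \Cref{rem:sober-spatial}, because spectral spaces are sober and coherent frames are spatial. The main obstacle is the prime-ideal-separation step, that is, producing a meet-prime element from the Zorn maximal ideal. It relies essentially on the identification $A\cong\mathrm{Idl}(K(A))$, in particular on compact elements being closed under meets.
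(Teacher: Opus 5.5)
Your proof is correct and is essentially the standard argument that the paper does not reproduce but simply cites from \cite[II.3.4]{Johnstone1986Stone}. That argument represents a coherent frame as $\mathrm{Idl}(K(A))$, uses the prime ideal theorem (your Zorn's lemma step) to separate elements by meet-prime elements, reads off spectrality of $\pt(A)$, and restricts the sober--spatial duality; your write-up is a faithful self-contained version of it.
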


\begin{proof}
See \cite[II.3.4]{Johnstone1986Stone}.
\end{proof}

\section{The Zariski spectrum of a commutative ring}\label{sec:ring-zariski}

Throughout this section $R$ denotes a commutative ring. We construct the Zariski
spectrum by building the frame of radical ideals, identifying its points with
prime ideals, and deducing spatiality and spectrality. There is of course nothing
new in this section; we include it only as a warm up for the more interesting
case of a Tambara functor. The ideas go back to Joyal
\cite{Joyal1975ChevalleyTarski}; see also \cite{Johnstone1986Stone} or
Section~1.3 of \cite{KockPitsch2017Hochster}.

\begin{Not}\label{not:radid-ring}
Let $\RadId(R)$ denote the set of radical ideals of $R$, ordered by inclusion.
\end{Not}

\begin{Lem}\label{lem:ring-radical-product}
For any ideals $I,J$ of $R$,
\[
\sqrt{IJ}=\sqrt{I}\cap\sqrt{J}.
\]
In particular, if $I$ and $J$ are radical then $\sqrt{IJ}=I\cap J$.
\end{Lem}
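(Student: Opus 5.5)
We prove the two inclusions $\sqrt{IJ}\subseteq\sqrt{I}\cap\sqrt{J}$ and $\sqrt{I}\cap\sqrt{J}\subseteq\sqrt{IJ}$ directly from the elementwise definition $\sqrt{K}=\{x\in R\mid x^n\in K \text{ for some } n\ge 1\}$. We use only two standard facts: $\sqrt{-}$ is monotone, and $IJ\subseteq I\cap J$.

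For the first inclusion, note that $IJ\subseteq I$ and $IJ\subseteq J$, since each generator $ab$ with $a\in I$, $b\in J$ lies in both ideals. Monotonicity of the radical then gives $\sqrt{IJ}\subseteq\sqrt{I}$ and $\sqrt{IJ}\subseteq\sqrt{J}$, hence $\sqrt{IJ}\subseteq\sqrt{I}\cap\sqrt{J}$.

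For the reverse inclusion, take $x\in\sqrt{I}\cap\sqrt{J}$. Then there are $n,m\ge 1$ with $x^n\in I$ and $x^m\in J$. By commutativity, $x^{n+m}=x^n x^m$ is a product of an element of $I$ and an element of $J$, so it lies in $IJ$. Hence $x\in\sqrt{IJ}$. This is the only step that uses anything beyond monotonicity, and commutativity of $R$ is exactly what makes it work. No step here is a real obstacle; the argument is routine.

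For the final sentence of the statement, suppose $I$ and $J$ are radical, so $\sqrt{I}=I$ and $\sqrt{J}=J$. Substituting into the identity just proved gives $\sqrt{IJ}=I\cap J$. This is the form used later: it shows that binary meets in $\RadId(R)$ are computed as $\sqrt{IJ}$, which will matter when checking that meet-prime radical ideals are precisely the prime ideals.
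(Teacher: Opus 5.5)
Your proof is correct and follows the paper's argument essentially verbatim: the forward inclusion uses $IJ\subseteq I\cap J$ and monotonicity of $\sqrt{-}$, and the reverse inclusion uses $x^{n+m}=x^nx^m\in IJ$. One harmless overstatement: $x^{n+m}=x^nx^m$ holds in any ring, so that step does not actually depend on commutativity.
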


\begin{proof}
Since $IJ\subseteq I\cap J$, we have
$\sqrt{IJ}\subseteq\sqrt{I\cap J}\subseteq\sqrt{I}\cap\sqrt{J}$. For the
reverse, let $x\in\sqrt{I}\cap\sqrt{J}$. Then $x^m\in I$ and $x^n\in J$ for
some $m,n\ge 1$, so $x^{m+n}\in IJ$, giving $x\in\sqrt{IJ}$.
\end{proof}

The following is well-known, although we are unsure where the result was first
proved.

\begin{Prop}\label{prop:radid-ring-frame}
$\RadId(R)$ is a frame, with meets given by intersections and joins by
$\bigvee_\lambda I_\lambda \coloneqq \sqrt{\sum_\lambda I_\lambda}$.
\end{Prop}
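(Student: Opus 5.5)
We first check that $\RadId(R)$ is a complete lattice with the stated operations, and then verify the frame distributive law; the distributive law is the only step with real content. The intersection of any family of radical ideals is again a radical ideal: if $x^n\in\bigcap_\lambda I_\lambda$ then $x^n\in I_\lambda$ for each $\lambda$, so $x\in I_\lambda$ for each $\lambda$. Hence arbitrary meets in $\RadId(R)$ are intersections, with top element $R$ and bottom element $\sqrt{0}$. In particular $\RadId(R)$ is a complete lattice, and its join of a family is the smallest radical ideal containing every $I_\lambda$. That smallest radical ideal is $\sqrt{\sum_\lambda I_\lambda}$, for two reasons. First, $\sqrt{\sum_\lambda I_\lambda}$ is a radical ideal containing each $I_\lambda$. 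Second, any radical ideal $K$ containing all $I_\lambda$ contains $\sum_\lambda I_\lambda$, and hence contains its radical, since $\sqrt{-}$ is monotone and idempotent and fixes $K$.

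For the distributive law, fix $J\in\RadId(R)$ and a family $\{I_\lambda\}$ of radical ideals. The inclusion $\bigvee_\lambda (J\cap I_\lambda)\subseteq J\cap\bigvee_\lambda I_\lambda$ holds in any lattice, so only the reverse inclusion needs proof. We will use \Cref{lem:ring-radical-product} to replace intersections by products. Since $J$ is radical, $J=\sqrt{J}$, and that lemma gives
\[
J\cap\sqrt{\textstyle\sum_\lambda I_\lambda}
=\sqrt{J}\cap\sqrt{\textstyle\sum_\lambda I_\lambda}
=\sqrt{J\cdot\textstyle\sum_\lambda I_\lambda}.
\]
Products of ideals distribute over sums, so $J\cdot\sum_\lambda I_\lambda=\sum_\lambda JI_\lambda$. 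For each $\lambda$ we have $JI_\lambda\subseteq J\cap I_\lambda$. Therefore
\[
\sqrt{\textstyle\sum_\lambda JI_\lambda}\subseteq\sqrt{\textstyle\sum_\lambda (J\cap I_\lambda)}=\bigvee_\lambda (J\cap I_\lambda),
\]
which is the required inclusion.

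The main point needing care is the passage through radicals in the distributive law. One must not try to compute $J\cap\sum_\lambda I_\lambda$ directly, because ideal intersection does not distribute over sums. The argument instead moves to products via \Cref{lem:ring-radical-product}, which is valid even though $\sum_\lambda I_\lambda$ itself need not be radical, since the lemma applies to arbitrary ideals. The remaining checks are routine: the empty join is $\sqrt{0}$, and the binary meet $\wedge$ coincides with $\cap$.
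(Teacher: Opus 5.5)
Your proof is correct and is essentially the paper's: meets are intersections, the join is $\sqrt{\sum_\lambda I_\lambda}$, and distributivity comes from \Cref{lem:ring-radical-product} together with $J\cdot\sum_\lambda I_\lambda=\sum_\lambda JI_\lambda$. The only difference is cosmetic. You get one inclusion from general lattice theory and use only the containment $JI_\lambda\subseteq J\cap I_\lambda$, whereas the paper proves the equality directly through $\sqrt{JI_\lambda}=J\cap I_\lambda$.
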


\begin{proof}
Intersections of radical ideals are radical, so $\RadId(R)$ has arbitrary meets.

Let $\{I_\lambda\}_\lambda$ be a family of radical ideals and set
$J=\sum_\lambda I_\lambda$. Then $\sqrt{J}$ is radical and contains each
$I_\lambda$, hence is an upper bound. If $K\in\RadId(R)$ is any other upper
bound, then $J\subseteq K$, so $\sqrt{J}\subseteq \sqrt{K}=K$. Thus $\sqrt{J}$
is the join.

It remains to check distributivity, i.e.,  for $K\in\RadId(R)$ and
$\{J_\lambda\}_\lambda\subseteq\RadId(R)$,
\[
K\cap \sqrt{\sum_\lambda J_\lambda} = \sqrt{\sum_\lambda (K\cap J_\lambda)}.
\]
Since $K$ is radical, \Cref{lem:ring-radical-product} gives
$K\cap \sqrt{\sum_\lambda J_\lambda}=\sqrt{K\cdot \sum_\lambda J_\lambda}$.
Now $K\cdot \sum_\lambda J_\lambda = \sum_\lambda (KJ_\lambda)$, so
$\sqrt{K\cdot \sum_\lambda J_\lambda}=\sqrt{\sum_\lambda KJ_\lambda}
= \sqrt{\sum_\lambda \sqrt{KJ_\lambda}}$.
Finally, again by \Cref{lem:ring-radical-product} (applied to $K$ and
$J_\lambda$), $\sqrt{KJ_\lambda}=K\cap J_\lambda$ for all $\lambda$, hence
$\sqrt{\sum_\lambda \sqrt{KJ_\lambda}}=\sqrt{\sum_\lambda (K\cap J_\lambda)}$.
Combining these equalities yields the distributivity law, so $\RadId(R)$ is a
frame.
\end{proof}

\begin{Thm}\label{thm:ring-points-primes}
There is a bijection $\pt(\RadId(R))\cong\Spec(R)$, under which
$U(I)$ corresponds to $ D(I)=\{\fp\in\Spec(R)\mid I\not\subseteq \fp\}$.
In particular, this bijection is a homeomorphism.
\end{Thm}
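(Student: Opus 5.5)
By \Cref{rem:points-meet-prime}, points of $\RadId(R)$ correspond bijectively to meet-prime elements of $\RadId(R)$. The plan is therefore to show that the meet-prime elements of $\RadId(R)$ are exactly the prime ideals of $R$, and then to compare the topologies.

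\textbf{Primes are meet-prime.} Every prime ideal $\fp$ is radical and proper, so it is an element of $\RadId(R)$ distinct from $\top=R$. Suppose $I,J\in\RadId(R)$ satisfy $I\cap J\subseteq\fp$. Then $IJ\subseteq I\cap J\subseteq \fp$, and primality gives $I\subseteq\fp$ or $J\subseteq\fp$.

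\textbf{Meet-primes are prime.} Let $q\neq R$ be meet-prime in $\RadId(R)$, and suppose $xy\in q$. Set $I=\sqrt{(x)}$ and $J=\sqrt{(y)}$. By \Cref{lem:ring-radical-product},
\[
I\cap J=\sqrt{(x)(y)}=\sqrt{(xy)}\subseteq \sqrt{q}=q.
\]
Meet-primality then gives $x\in I\subseteq q$ or $y\in J\subseteq q$. Since $q$ is proper, it is a prime ideal.

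Since the two correspondences are the identity on underlying ideals, this yields the bijection $\pt(\RadId(R))\cong\Spec(R)$, $p_\fp\leftrightarrow\fp$.

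\textbf{Matching the opens.} Unwinding the formula in \Cref{rem:points-meet-prime}, we have $p_\fp(I)=1$ if and only if $I\not\subseteq\fp$. Hence the bijection carries $U(I)$ onto $D(I)$.

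\textbf{Homeomorphism.} The sets $U(I)$, for $I\in\RadId(R)$, form the topology of $\pt(\RadId(R))$; they are already closed under unions, since $\bigcup U(I_\lambda)=U(\bigvee I_\lambda)$. The Zariski opens are the sets $D(\mathfrak a)$ for arbitrary ideals $\mathfrak a$. A prime contains $\mathfrak a$ if and only if it contains $\sqrt{\mathfrak a}$, so $D(\mathfrak a)=D(\sqrt{\mathfrak a})$, and it suffices to take $\mathfrak a$ radical. The two topologies therefore correspond exactly under the bijection, which is thus a homeomorphism.

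The only step with real content is showing that meet-primes are prime. It reduces to applying \Cref{lem:ring-radical-product} to principal radicals, so I expect no genuine obstacle.
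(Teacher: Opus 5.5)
Your proposal is correct and follows essentially the same route as the paper. You reduce to meet-prime elements via \Cref{rem:points-meet-prime}, get primes $\Rightarrow$ meet-prime from $IJ\subseteq I\cap J$, get meet-prime $\Rightarrow$ prime from $\sqrt{(xy)}=\sqrt{(x)}\cap\sqrt{(y)}$, and match $U(I)$ with $D(I)$; your homeomorphism step is slightly more explicit than the paper's, since you note that $\{U(I)\}$ and $\{D(I)\}$ for radical $I$ each exhaust the respective topologies.
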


\begin{proof}
By \Cref{rem:points-meet-prime}, points of $\RadId(R)$ correspond to meet-prime
elements, i.e., radical ideals $P\ne R$ such that
\begin{equation}\label{eq:ring-meet-prime}
I\cap J\subseteq P \Longrightarrow I\subseteq P\ \text{or}\ J\subseteq P
\end{equation}
for all radical ideals $I,J$.

It is well known that \eqref{eq:ring-meet-prime} is equivalent to $P$ being a
prime ideal. Indeed, if $P$ is prime and $I\cap J\subseteq P$, then
$IJ\subseteq I\cap J\subseteq P$, hence $I\subseteq P$ or $J\subseteq P$.
Conversely, suppose $P$ satisfies \eqref{eq:ring-meet-prime} and let $fg\in P$.
Then $\sqrt{(fg)}\subseteq P$, and since
$\sqrt{(fg)}=\sqrt{(f)}\cap\sqrt{(g)}$ we obtain $\sqrt{(f)}\subseteq P$ or
$\sqrt{(g)}\subseteq P$, i.e.\ $f\in P$ or $g\in P$.

Finally, by definition of the topology on $\pt(\RadId(R))$, the basic open
$U(I)$ consists of those points $p$ with $p(I)=1$, i.e.\ those meet-prime
elements $P$ with $I\not\subseteq P$. Under the identification
$P\leftrightarrow \fp$, this is exactly the Zariski basic open $D(I)$. Since the
sets $U(I)$ form a basis, the bijection is a homeomorphism.
\end{proof}

\begin{Thm}\label{thm:ring-spatial}
The frame $\RadId(R)$ is spatial. In particular, there is a canonical isomorphism
of frames $\RadId(R)\cong\Omega(\Spec(R))$.
\end{Thm}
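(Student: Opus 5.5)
We will verify the separation criterion of \Cref{prop:spatial-separation}. Let $I,J\in\RadId(R)$ with $I\not\subseteq J$. We must produce a point $p$ of $\RadId(R)$ with $p(I)=1$ and $p(J)=0$. By \Cref{thm:ring-points-primes} and \Cref{rem:points-meet-prime}, this amounts to finding a prime ideal $\fp$ with $J\subseteq\fp$ and $I\not\subseteq\fp$. Indeed, $p_\fp(J)=0$ precisely when $J\subseteq \fp$, and $p_\fp(I)=1$ precisely when $I\not\subseteq\fp$.

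To construct $\fp$, choose $f\in I\setminus J$. Since $J$ is radical, no power $f^n$ lies in $J$, so the multiplicative set $S=\{1,f,f^2,\dots\}$ is disjoint from $J$. Consider the poset of ideals $K\supseteq J$ with $K\cap S=\emptyset$, ordered by inclusion. It is nonempty, since it contains $J$, and it is closed under unions of chains. Zorn's lemma therefore gives a maximal element $\fp$.

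The standard argument shows that $\fp$ is prime. It is proper because $1\in S$. Suppose $a,b\notin\fp$. Then $\fp+(a)$ and $\fp+(b)$ strictly contain $\fp$, so by maximality each meets $S$. Write $f^m=x+ra$ and $f^n=y+sb$ with $x,y\in\fp$ and $r,s\in R$. Multiplying gives $f^{m+n}\in\fp+(ab)$. If $ab\in\fp$, this would put $f^{m+n}\in\fp$, contradicting $\fp\cap S=\emptyset$. Hence $ab\notin\fp$, and $\fp$ is prime. By construction $J\subseteq\fp$, and $f\in I\setminus\fp$, so $I\not\subseteq\fp$. The separation criterion therefore holds and $\RadId(R)$ is spatial.

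For the final clause, spatiality means that $\iota\colon\RadId(R)\to\Omega(\pt(\RadId(R)))$ is an isomorphism. Composing with the homeomorphism $\pt(\RadId(R))\cong\Spec(R)$ of \Cref{thm:ring-points-primes}, which carries $U(I)$ to $D(I)$, yields the frame isomorphism $\RadId(R)\cong\Omega(\Spec(R))$, $I\mapsto D(I)$. The only nontrivial step is the Zorn's lemma construction of a prime avoiding $S$. It is standard here, and it is exactly the step that will need a Tambara-functor analogue later in the paper.
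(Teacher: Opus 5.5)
Your proof is correct and follows the same route as the paper: both use the separation criterion to reduce the problem to finding, for some $f\in I\setminus J$, a prime containing $J$ but not $f$. The only difference is that the paper cites the standard fact $J=\bigcap_{P\supseteq J}P$ for radical $J$, while you prove it inline with the usual Zorn's lemma argument; note also that for Tambara functors the paper does not redo this Zorn step but imports the analogous fact from Chan--Spitz (\Cref{prop:levelwise-radical}).
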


\begin{proof}
By \Cref{prop:spatial-separation}, it suffices to show that if $I\not\subseteq J$
for radical ideals $I,J$, there is a prime $P$ with $I\not\subseteq P$ and
$J\subseteq P$. Choose $f\in I\setminus J$. Since $J=\sqrt{J}=\bigcap_{P\supseteq J}P$ is the intersection of
the primes containing it, some prime $P\supseteq J$ has $f\notin P$.
\end{proof}

\begin{Thm}\label{thm:ring-coherent}
The frame $\RadId(R)$ is coherent, with compact elements the radical finitely
generated ideals $\RadFg(R)$.
\end{Thm}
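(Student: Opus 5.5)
We show two things: the compact elements of $\RadId(R)$ are exactly the radicals $\sqrt{(f_1,\dots,f_n)}$ of finitely generated ideals (this is what we mean by $\RadFg(R)$), and these satisfy the three axioms of \Cref{def:compact-element}.

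\emph{Such radicals are compact.} Let $I=\sqrt{(f_1,\dots,f_n)}$ and suppose $I\subseteq\bigvee_\lambda J_\lambda=\sqrt{\sum_\lambda J_\lambda}$, with joins as in \Cref{prop:radid-ring-frame}. Each $f_i$ lies in $\sqrt{\sum_\lambda J_\lambda}$, so $f_i^{m_i}\in\sum_\lambda J_\lambda$ for some $m_i\ge1$. Since elements of a sum of ideals are finite sums, each $f_i^{m_i}$ lies in $\sum_{\lambda\in F_i}J_\lambda$ for a finite set $F_i$. Put $F=\bigcup_i F_i$. Then every $f_i$ lies in $\sqrt{\sum_{\lambda\in F}J_\lambda}$, and since this is radical it contains $I$.

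\emph{Every compact element is of this form, and compacts generate.} Any radical ideal satisfies
\[
J=\sqrt{\textstyle\sum_{f\in J}(f)}=\bigvee_{f\in J}\sqrt{(f)},
\]
so $\RadId(R)$ is generated under joins by the elements $\sqrt{(f)}$, which are compact by the previous step. If $J$ is itself compact, then $J$ is contained in a finite subjoin $\bigvee_{i=1}^n\sqrt{(f_i)}=\sqrt{(f_1,\dots,f_n)}$. The reverse containment is automatic, so $J=\sqrt{(f_1,\dots,f_n)}$.

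\emph{The top is compact.} We have $\top=R=\sqrt{(1)}$.

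\emph{Compacts are closed under finite meets.} This is the only step needing a real argument. Let $I=(f_1,\dots,f_n)$ and $J=(g_1,\dots,g_m)$. By \Cref{lem:ring-radical-product},
\[
\sqrt I\cap\sqrt J=\sqrt{IJ}.
\]
The product $IJ$ is generated by the finitely many elements $f_ig_j$, so $\sqrt{IJ}$ is again the radical of a finitely generated ideal. The empty meet is $\top$, handled above. Hence the compact elements are closed under finite meets and $\RadId(R)$ is coherent, with compact elements exactly $\RadFg(R)$.
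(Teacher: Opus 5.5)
Your proof is correct and follows essentially the same route as the paper. You show compactness of $\sqrt{(f_1,\dots,f_n)}$ by placing each $f_i^{m_i}$ in a finite subsum of $\sum_\lambda J_\lambda$, get generation from the principal radicals $\sqrt{(f)}$, obtain the converse from a finite subjoin, and prove closure under meets via $\sqrt{I}\cap\sqrt{J}=\sqrt{IJ}$ with $IJ=(f_ig_j)$; the only differences are that you cite \Cref{lem:ring-radical-product} explicitly and mention the empty meet, which the paper leaves implicit.
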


\begin{proof}
Every radical ideal is the join of the principal radical ideals it contains, so
$\RadFg(R)$ generates $\RadId(R)$ under joins. For finite meets:
$\sqrt{(f_1,\dots,f_m)}\cap\sqrt{(g_1,\dots,g_n)}=\sqrt{(f_ig_j : i,j)}$,
which is radical finitely generated. For compactness, if
$K=\sqrt{(f_1,\dots,f_n)}\subseteq\sqrt{\sum_\lambda I_\lambda}$, then for each
$i$ there exists $N_i\ge 1$ such that $f_i^{N_i}\in\sum_\lambda I_\lambda$.
Hence for each $i$ there exists a finite subset $\Lambda_i$ such that
$f_i^{N_i}\in\sum_{\lambda\in\Lambda_i} I_\lambda$. Taking the union of these
finite sets over $i=1,\dots,n$, we obtain a finite subset $\Lambda_0$ such that
$f_i\in\sqrt{\sum_{\lambda\in\Lambda_0} I_\lambda}$ for all $i$, hence
$K\subseteq\sqrt{\sum_{\lambda\in\Lambda_0} I_\lambda}$. Thus every element of
$\RadFg(R)$ is compact. In particular, the top element $R=\sqrt{(1)}$ is
compact. Hence $\RadId(R)$ is coherent.

Conversely, let $I\in\RadId(R)$ be compact. Since $I$ is radical,
\[
I=\sqrt{\sum_{f\in I}(f)}=\bigvee_{f\in I}\sqrt{(f)}.
\]
By compactness, there exist $f_1,\dots,f_n\in I$ such that
\[
I\subseteq\bigvee_{j=1}^n\sqrt{(f_j)}.
\]
Since each $\sqrt{(f_j)}\subseteq I$, the reverse inclusion is automatic, so
\[
I=\bigvee_{j=1}^n\sqrt{(f_j)}=\sqrt{(f_1,\dots,f_n)}.
\]
Hence $I\in\RadFg(R)$, and the compact elements of $\RadId(R)$ are exactly the
radical finitely generated ideals.
\end{proof}

\begin{Cor}\label{cor:ring-spectral}
For any ring $R$, the spectrum $\Spec(R)$ is a spectral space.
\end{Cor}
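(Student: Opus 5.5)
The corollary follows by chaining results already established in this section, so the plan is a short assembly rather than new work. By \Cref{thm:ring-coherent}, the frame $\RadId(R)$ is coherent. Applying \Cref{prop:coherent-spectral}, a coherent frame is spatial and its point space $\pt(\RadId(R))$ is spectral in the sense of Hochster. Note that this also re-derives \Cref{thm:ring-spatial}, although we do not need that here.

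It remains to transfer spectrality from $\pt(\RadId(R))$ to $\Spec(R)$. \Cref{thm:ring-points-primes} gives a bijection $\pt(\RadId(R))\cong\Spec(R)$ sending the basic open $U(I)$ to the Zariski open $D(I)$, and this bijection is a homeomorphism. Being spectral is a purely topological property (quasi-compact, sober, quasi-compact opens closed under finite intersection and forming a basis), so it is invariant under homeomorphism. Hence $\Spec(R)$ is spectral.

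The only point needing a moment's care is that the topology on $\Spec(R)$ used here is the usual Zariski topology. Its basic opens are $D(f)$, and $D(I)=\bigcup_{f\in I}D(f)$, so the family $\{D(I)\}$ generates the same topology as the $D(f)$. This matches the topology transported along the bijection of \Cref{thm:ring-points-primes}. There is no genuine obstacle; all the substantive work is in \Cref{thm:ring-coherent} and in the cited \Cref{prop:coherent-spectral}.
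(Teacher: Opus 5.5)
Your proposal is correct and is the same argument as the paper's proof, which combines \Cref{thm:ring-coherent}, \Cref{prop:coherent-spectral} and the homeomorphism of \Cref{thm:ring-points-primes}. Your added check, that the sets $D(I)=\bigcup_{f\in I}D(f)$ generate the usual Zariski topology, is a harmless sanity step that the paper leaves implicit.
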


\begin{proof}
This follows by combining
\Cref{prop:coherent-spectral,thm:ring-points-primes,thm:ring-coherent}.
\end{proof}

\section{Tambara functors and their ideals}\label{sec:tambara}

Fix a finite group $G$ throughout. We introduce Tambara functors and their ideal
theory. For background on Mackey functors see \cite{ThevenazWebb1995Structure};
for Tambara functors see
\cite{Tambara1993Multiplicative,strickland2012tambarafunctors,HillMazur2019Equivariant,chan2025tambaraaffineline}.

\begin{Def}\label{def:tambara-functor}
A \emph{$G$-Tambara functor} $T$ consists of:
\begin{enumerate}
\item for each subgroup $H\le G$, a commutative ring $T(G/H)$;
\item for each inclusion $K\le H$, a ring homomorphism
$r^H_K\colon T(G/H)\to T(G/K)$ called \emph{restriction};
\item for each inclusion $K\le H$, an additive map
$t^H_K\colon T(G/K)\to T(G/H)$ called \emph{transfer};
\item for each inclusion $K\le H$, a multiplicative map
$N^H_K\colon T(G/K)\to T(G/H)$ called \emph{norm};
\item for each $g\in G$ and subgroup $H\le G$, a ring isomorphism
$c_{g,H}\colon T(G/H)\xrightarrow{\cong} T(G/gHg^{-1})$ called
\emph{conjugation}.
\end{enumerate}
These data are required to satisfy the following axioms.
\begin{enumerate}
\item[{\textup{(T1)}}] (Functoriality)
For subgroups $L\le K\le H$ and $g,h\in G$,
\[
r^K_L\circ r^H_K=r^H_L,\qquad
N^H_K\circ N^K_L=N^H_L,\qquad
t^H_K\circ t^K_L=t^H_L,
\]
and $c_{g,hHh^{-1}}\circ c_{h,H}=c_{gh,H}$.

\item[\textup{(T2)}] (Conjugacy compatibility)
For $K\le H$ and $g\in G$,
\[
c_{g,K}\circ r^H_K = r^{gHg^{-1}}_{gKg^{-1}}\circ c_{g,H},
\qquad
c_{g,H}\circ N^H_K = N^{gHg^{-1}}_{gKg^{-1}}\circ c_{g,K},
\qquad
c_{g,H}\circ t^H_K = t^{gHg^{-1}}_{gKg^{-1}}\circ c_{g,K}.
\]

\item[\textup{(T3)}] (Additive double coset formula)
For subgroups $H,L\le K$,
\[
r^K_L\circ t^K_H
=
\sum_{[\gamma]\in L\backslash K/H}
t^L_{L\cap \gamma H\gamma^{-1}}
\circ r^{\gamma H\gamma^{-1}}_{L\cap \gamma H\gamma^{-1}}
\circ c_{\gamma,H}.
\]

\item[\textup{(T4)}] (Multiplicative double coset formula)
For subgroups $H,L\le K$ and $a\in T(G/H)$,
\[
r^K_L\bigl(N^K_H(a)\bigr)
=
\prod_{[\gamma]\in L\backslash K/H}
N^L_{L\cap \gamma H\gamma^{-1}}
\Bigl(
r^{\gamma H\gamma^{-1}}_{L\cap \gamma H\gamma^{-1}}
\bigl(c_{\gamma,H}(a)\bigr)
\Bigr).
\]

\item[\textup{(T5)}] (Frobenius reciprocity)
For $K\le H$, $x\in T(G/H)$, and $y\in T(G/K)$,
\[
t^H_K\bigl(r^H_K(x)\cdot y\bigr)=x\cdot t^H_K(y).
\]

\item[\textup{(T6)}] (Tambara reciprocity) The norm maps satisfy Tambara
reciprocity for sums and for transfers, as in
\cite[Theorems~2.5 and~2.7]{HillMazur2019Equivariant}; see also
\cite[Proposition~2.15(f)]{CalleChanMehrleQuigleySpitzNiel2026Spectrum}.
\end{enumerate}
When $G=1$, a $G$-Tambara functor is an ordinary commutative ring.
\end{Def}

\begin{Rem}
The axioms involving only $r,t,$ and $c$ recover the underlying Mackey functor,
and (T5) upgrades this to a Green functor. The genuinely Tambara structure is
carried by the norm maps together with the additional axioms (T4) and (T6). We
intentionally leave (T6) somewhat implicit, since Tambara reciprocity will not
play a major explicit role below.
\end{Rem}

\begin{Rem}
There are many equivalent ways to define a Tambara functor; we have followed
\cite[Definition~2.11]{HillMazur2019Equivariant}. For example, one may define a
Tambara functor as a product-preserving functor $\underline{T}$ from the category
of bispans of finite $G$-sets to sets, together with the property that each
$\underline{T}(U)$ is a commutative ring; see
\cite[Section~8]{Tambara1993Multiplicative}. The description above then comes
from the observation that every finite $G$-set decomposes as a coproduct of
transitive $G$-sets.
\end{Rem}

\begin{Exa}\label{exa:C2-reciprocity}
Let $S$ be a $C_2$--Tambara functor, which we can represent as follows:
\[
\begin{tikzcd}
S(C_2/C_2)
    \ar[d, bend right=50,"{r^{C_2}_e}"']
    \\[1.5cm]
S(C_2/e)
    \ar[u, bend right=50,orange, "t_{e}^{C_2}"']
    \ar[u, blue, "{N}_e^{C_2}"]
    \arrow["{c_{\gamma}}"', from=2-1, to=2-1, loop, in=305, out=235, distance=10mm]
\end{tikzcd}
\]
Let $a,b\in S(C_2/e)$. Then Tambara reciprocity for sums takes the form
\[
N^{C_2}_e(a+b)=N^{C_2}_e(a)+N^{C_2}_e(b)+t^{C_2}_e(a\cdot c_\gamma(b)).
\]
This is a special case of \cite[Corollary~2.6]{HillMazur2019Equivariant}.
Tambara reciprocity for transfers is trivial in this case because there is no proper
chain of subgroups $H'<H<C_2$, so no additional nontrivial formula arises.
\end{Exa}

\begin{Exa}\label{exa:burnside}
The \emph{Burnside Tambara functor} $A_G$ has $A_G(G/H)=A(H)$, the Burnside
ring of $H$, with the standard restriction, transfer, and norm maps. It is the
initial $G$-Tambara functor. In the case $G = C_2$, this looks as follows,
where the $C_2$-action on $\mathbb{Z}$ is trivial:
\[
\begin{tikzcd}
\mathbb{Z}[t]/(t^2-2t)
    \ar[d, bend right=50,
        "t \mapsto 2"'{yshift=-7pt, fill=white, inner sep=1pt}]
    \\[1.5cm]
\mathbb{Z}
    \ar[u, bend right=50, orange,
        "a \mapsto at"'{fill=white, inner sep=1pt}]
    \ar[u, blue,
        "a \mapsto a +\frac{a^2-a}{2}t"{yshift=7pt, fill=white, inner sep=1pt}]
\end{tikzcd}
\]
One easily verifies that the Tambara reciprocity formula in
\Cref{exa:C2-reciprocity} holds.
\end{Exa}

\begin{Exa}\label{exa:constant-tambara}
For any commutative ring $R$, the \emph{constant} Tambara functor
$\underline{R}$ has $\underline{R}(G/H)=R$ for all $H$, with
$r^H_K=\mathrm{id}$, $t^H_K(x)=[H:K]\cdot x$, $N^H_K(x)=x^{[H:K]}$, and
$c_g=\mathrm{id}$. In the case $G = C_2$ and $R = \bbZ$, this looks as follows:
\[
\begin{tikzcd}
\mathbb{Z}
    \ar[d, bend right=50,
        "1"'{yshift=-7pt, fill=white, inner sep=1pt}]
    \\[1.5cm]
\mathbb{Z}
    \ar[u, bend right=50, orange,
        "2"'{fill=white, inner sep=1pt}]
    \ar[u, blue,
        "a \mapsto a^2"{yshift=7pt, fill=white, inner sep=1pt}]
\end{tikzcd}
\]
In this case, the Tambara reciprocity formula is simply the identity
$(a+b)^2=a^2+b^2+2ab$.
\end{Exa}

\begin{Def}\label{def:tambara-ideal}
A \emph{(Tambara) ideal} $I\trianglelefteq T$ is a collection of additive
subgroups $I(G/H)\subseteq T(G/H)$, one for each $H\le G$, satisfying:
\begin{enumerate}
\item[\textup{(I1)}] Each $I(G/H)$ is an ideal of the ring $T(G/H)$.
\item[\textup{(I2)}] $r^H_K\bigl(I(G/H)\bigr)\subseteq I(G/K)$ for all
$K\le H$.
\item[\textup{(I3)}] $t^H_K\bigl(I(G/K)\bigr)\subseteq I(G/H)$ for all
$K\le H$.
\item[\textup{(I4)}] $N^H_K\bigl(I(G/K)\bigr)\subseteq I(G/H)$ for all
$K\le H$.
\item[\textup{(I5)}] $c_g\bigl(I(G/H)\bigr)= I(G/gHg^{-1})$ for all $g\in G$,
$H\le G$.
\end{enumerate}
The ideal $I$ is \emph{proper} if $1\notin I(G/G)$.
\end{Def}

\begin{Rem}
As noted in \cite{wisdom:2508.09360v1} (where it is credited to Natalie Stewart
and David Chan), the condition $N^H_K(I(G/K)) \subseteq N^H_K(0)+I(G/H)$ given
by Nakaoka is not needed when giving a levelwise definition of ideal (as opposed
to indexing on all finite $G$-sets).
\end{Rem}

\begin{Rem}\label{rem:generated-ideal}
By \cite[Proposition~3.2]{Nakaoka2012Ideals} (or directly from the definitions),
the levelwise intersection of Tambara ideals is again a Tambara ideal.

If
 $\{I_\lambda\}_{\lambda\in \Lambda}$ is a family of Tambara ideals, then by
\cite[Proposition~3.13]{Nakaoka2012Ideals} the levelwise sum
\[
\Big(\sum_{\lambda\in \Lambda} I_\lambda\Big)(G/H)
\coloneqq \sum_{\lambda\in \Lambda} I_\lambda(G/H)
\]
is again a Tambara ideal, namely the smallest Tambara ideal containing each
$I_\lambda$. If
$X=\{x_\alpha\}_{\alpha\in A}$ is a collection of elements with
$x_\alpha\in T(G/H_\alpha)$, then $\langle X\rangle$ denotes the Tambara ideal
generated by $X$, i.e., the smallest Tambara ideal containing each $x_\alpha$ at level
$G/H_\alpha$, or equivalently the intersection of all Tambara ideals containing
each $x_\alpha$. For $x\in T(G/H)$, we write $\langle x \rangle$ (or
$\langle x\rangle_H$ if we wish to make the subgroup $H$ clear) for the smallest
Tambara ideal containing $x$ in $T(G/H)$.
\end{Rem}

\begin{Def}\label{def:fg-tambara-ideal}
A Tambara ideal $I\trianglelefteq T$ is \emph{finitely generated} if there exist
finitely many subgroups $H_1,\dots,H_n\le G$ (not necessarily distinct) and
elements $x_i\in T(G/H_i)$ such that $I=\langle x_1,\dots,x_n\rangle$. We write
$\RadFg_G(T)$ for the set of radical finitely generated Tambara ideals, ordered
by inclusion.
\end{Def}

\begin{Def}\label{def:tambara-product}
For ideals $I,J\trianglelefteq T$, the \emph{product} $IJ$ is the Tambara ideal
generated by the levelwise products $\{I(G/H)\cdot J(G/H)\}_{H\le G}$.
\end{Def}

\begin{Rem}\label{rem:product-not-levelwise}
In contrast to the ring case, the product $IJ$ is generally \emph{not} computed
levelwise. Indeed, $(IJ)(G/H)$ may be strictly larger than $I(G/H)\cdot J(G/H)$, since
the Tambara ideal generated by the levelwise products must also be closed under
transfers and norms.
\end{Rem}

The following useful terminology comes from \cite{chan2025tambaraaffineline}.

\begin{Def}\label{def:mult-translate}
Let $x\in T(G/H)$. A \emph{multiplicative translate} of $x$ into $T(G/L)$ is
any element of the form
$N^L_{gKg^{-1}}\bigl(c_{g,K}(r^H_K(x))\bigr)$
for some $K\le H$, $g\in G$ with $gKg^{-1}\le L$. A \emph{generalized product}
of $x\in T(G/H_1)$ and $y\in T(G/H_2)$ is a product $\mu\cdot\nu\in T(G/L)$
where $\mu$ is a multiplicative translate of $x$ into $T(G/L)$ and $\nu$ is a
multiplicative translate of $y$ into $T(G/L)$, for some $L\le G$.
\end{Def}

\begin{Rem}\label{rem:finitely-many-translates}
Since $G$ is finite, for any $x\in T(G/H)$ and any $L\le G$ there are only
finitely many multiplicative translates of $x$ into $T(G/L)$, since they arise
from finitely many choices of $K\le H$ and $g\in G$ with $gKg^{-1}\le L$.
Consequently, for fixed $x$ and $y$, the set of generalized products of $x$ and
$y$ is finite.
\end{Rem}

The following definition first appeared in
\cite[Definition~2.6]{CalleGinnett2023Spectrum}.

\begin{Def}\label{def:Q}
Let $I\trianglelefteq T$ be a Tambara ideal, and let $x\in T(G/H_1)$,
$y\in T(G/H_2)$. We write $\mathcal{Q}(I,x,y)$ for the statement that every
generalized product of $x$ and $y$ lies in $I$.
\end{Def}

\begin{Rem}\label{rem:Q-analogue}
The condition $\mathcal{Q}(I,x,y)$ is the Tambara-theoretic analogue of the
ring-theoretic condition $xy\in I$.
\end{Rem}

\begin{Prop}[{\cite[Corollary~3.12]{Nakaoka2012Ideals}}]\label{prop:product-criterion-Q}
Let $I\trianglelefteq T$ be a Tambara ideal, and let $a\in T(G/H_1)$,
$b\in T(G/H_2)$. Then the following are equivalent:
\begin{enumerate}
\item[\textup{(1)}] $\langle a\rangle\langle b\rangle\subseteq I$.
\item[\textup{(2)}] $\mathcal{Q}(I,a,b)$ holds, i.e., every generalized product
of $a$ and $b$ lies in $I$.
\end{enumerate}
\end{Prop}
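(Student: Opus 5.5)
The plan is to prove both implications by describing $\langle a\rangle$ and $\langle b\rangle$ explicitly enough to control generators of their product.

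For $(1)\Rightarrow(2)$, take a generalized product $\mu\nu\in T(G/L)$. Here $\mu=N^L_{gKg^{-1}}c_{g,K}r^{H_1}_K(a)$ is a multiplicative translate of $a$, and similarly $\nu$ is a multiplicative translate of $b$. By (I2), (I4) and (I5) applied to $\langle a\rangle$, every multiplicative translate of $a$ lies in $\langle a\rangle(G/L)$; likewise $\nu\in\langle b\rangle(G/L)$. Hence $\mu\nu\in\langle a\rangle(G/L)\cdot\langle b\rangle(G/L)\subseteq(\langle a\rangle\langle b\rangle)(G/L)\subseteq I(G/L)$.

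For $(2)\Rightarrow(1)$, it suffices by \Cref{def:tambara-product} to show $\langle a\rangle(G/L)\cdot\langle b\rangle(G/L)\subseteq I(G/L)$ for every $L$, because $I$ is a Tambara ideal containing these levelwise products. I would first establish a normal form for principal ideals: $\langle a\rangle(G/L)$ is the set of finite sums of elements $t^L_M(z\cdot m)$, where $M\le L$, $z\in T(G/M)$ and $m$ is a multiplicative translate of $a$ into $T(G/M)$. To prove this, call the proposed collection $J$ and check that it is a Tambara ideal; minimality is then clear.
\begin{itemize}
\item Closure under multiplication by ring elements follows from Frobenius reciprocity (T5).
\item Closure under transfers and conjugations is immediate from (T1) and (T2).
\item Closure under restriction follows from the additive double coset formula (T3), together with the multiplicative formula (T4). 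The latter shows that the restriction of a multiplicative translate is a product of multiplicative translates; the extra factors can be absorbed into $z$, since one factor suffices.
\item Closure under norms is the main obstacle. One must compute $N^{L'}_L$ of a sum of transfers. Tambara reciprocity (T6) expresses this as a sum of transfers of products of norms of the summands. Each term contains at least one factor that is a norm of a restriction of some $z\cdot m$, and by (T1) and (T4) such a factor is again of the form (ring element)$\cdot$(multiplicative translate of $a$).
\end{itemize}
This computation, or an appeal to a reference for the generation formula, e.g.\ \cite{Nakaoka2012Ideals}, is where the work lies.

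Given the normal form, consider a product $t^L_M(z m)\cdot t^L_{M'}(z' m')$, with $m$ a translate of $a$ and $m'$ a translate of $b$. By Frobenius reciprocity and the double coset formula this equals a sum of terms $t^L_{M''}(w\cdot\tilde m\cdot\tilde m')$, where $M''=M\cap\gamma M'\gamma^{-1}$ and $\tilde m,\tilde m'$ are restrictions, conjugated as needed, of $m$ and $m'$. By (T4), $\tilde m$ is a product of multiplicative translates of $a$ into $T(G/M'')$; we keep one factor and absorb the others into $w$, and do the same for $\tilde m'$. Each term then has the form $t^L_{M''}(w'\mu\nu)$ with $\mu\nu$ a generalized product of $a$ and $b$. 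This lies in $I$ by $\mathcal{Q}(I,a,b)$ together with (I1) and (I3). Summing gives the claim.
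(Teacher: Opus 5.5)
Your proof is correct. The paper gives no argument of its own here: it imports the statement from \cite[Corollary~3.12]{Nakaoka2012Ideals}, which is formulated for Nakaoka's ideals indexed on all finite $G$-sets, with the extra condition involving $N(0)$. Your proposal is therefore a self-contained replacement written directly in the levelwise language of \Cref{def:tambara-ideal}, which avoids translating between the two definitions. Your $(1)\Rightarrow(2)$ is exactly the argument the paper later uses for the easy inclusion in \Cref{cor:product-generators}. Your $(2)\Rightarrow(1)$ computation actually shows a bit more: $\langle a\rangle(G/L)\cdot\langle b\rangle(G/L)$ already consists of sums of transfers of ring multiples of generalized products, with no norms needed, which is a slightly sharper form of \Cref{cor:product-generators}.

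What the citation saves, and what your route has to pay for, is the norm-closure of your normal form. That step needs the full Tambara reciprocity, which the paper deliberately leaves implicit. The point to state explicitly there is that $N^{L'}_L$ is a norm along the surjection $G/L\to G/L'$. Hence every term of the expansion is a transfer of a \emph{nonempty} product of norms of restrictions of the $z_i m_i$, and no empty product (equal to $1$) can appear. This is also why the levelwise definition can drop Nakaoka's $N(0)$ condition. If you use the orbit-level sum and transfer formulas separately rather than the single exponential-diagram formula, the cross terms of the sum formula involve norms of restricted transfers into proper subgroups $H_f<L'$. These are handled by induction on $|L'|$.
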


\begin{Cor}\label{cor:product-generators}
Let $a\in T(G/H_1)$ and $b\in T(G/H_2)$. The product
$\langle a\rangle\langle b\rangle$ is generated as a Tambara ideal by the set of
all generalized products of $a$ and $b$.
\end{Cor}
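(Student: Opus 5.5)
The plan is to prove a double inclusion. Let $S$ be the set of all generalized products of $a$ and $b$. By \Cref{rem:finitely-many-translates}, $S$ is a finite set of elements $\mu\nu\in T(G/L)$, where $L$ ranges over subgroups of $G$. Set $J\coloneqq\langle S\rangle$, the Tambara ideal generated by $S$ as in \Cref{rem:generated-ideal}. The claim is then $J=\langle a\rangle\langle b\rangle$, and both inclusions will come from \Cref{prop:product-criterion-Q}, each time with a suitable choice of the ideal $I$.

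\emph{Inclusion $\langle a\rangle\langle b\rangle\subseteq J$.} Take $I=J$ in \Cref{prop:product-criterion-Q}. By construction, every generalized product of $a$ and $b$ lies in $J$, so $\mathcal{Q}(J,a,b)$ holds. The implication (2)$\Rightarrow$(1) then gives $\langle a\rangle\langle b\rangle\subseteq J$.

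\emph{Inclusion $J\subseteq\langle a\rangle\langle b\rangle$.} Take $I=\langle a\rangle\langle b\rangle$, which is a Tambara ideal by \Cref{def:tambara-product}. The containment $\langle a\rangle\langle b\rangle\subseteq I$ holds trivially, so (1)$\Rightarrow$(2) of \Cref{prop:product-criterion-Q} gives $\mathcal{Q}(I,a,b)$. That is, every element of $S$ lies in $\langle a\rangle\langle b\rangle$. Since $J$ is the smallest Tambara ideal containing $S$, it follows that $J\subseteq\langle a\rangle\langle b\rangle$.

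There is no serious obstacle; the argument is a formal consequence of Nakaoka's criterion. The one point to check is level bookkeeping. "Generated by $S$" must be read as containing each $\mu\nu$ at its own level $T(G/L)$, and $\mathcal{Q}$ must be read with the same convention, so that minimality of $\langle S\rangle$ applies levelwise. As a consistency check, the inclusion $J\subseteq\langle a\rangle\langle b\rangle$ could also be verified directly, without the criterion. A multiplicative translate of $a$ into $T(G/L)$ lies in $\langle a\rangle(G/L)$ by (I2), (I4) and (I5). Hence $\mu\nu\in\langle a\rangle(G/L)\cdot\langle b\rangle(G/L)$, which is contained in $\langle a\rangle\langle b\rangle$ by definition of the product.
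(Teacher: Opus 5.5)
Your proof is correct and matches the paper's: the inclusion $\langle a\rangle\langle b\rangle\subseteq J$ comes from (2)$\Rightarrow$(1) of \Cref{prop:product-criterion-Q}, exactly as in the paper. For the reverse inclusion, your main route applies (1)$\Rightarrow$(2) with $I=\langle a\rangle\langle b\rangle$, which is an equally valid shortcut; the paper instead uses the direct argument you give as your consistency check (each translate lies in $\langle a\rangle(G/L)$ or $\langle b\rangle(G/L)$, so the product lies in the levelwise product).
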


\begin{proof}
Let $J$ denote the Tambara ideal generated by all generalized products of $a$
and $b$. By definition, $\mathcal{Q}(J,a,b)$ holds, so
$\langle a\rangle\langle b\rangle\subseteq J$ by
\Cref{prop:product-criterion-Q}.

For the reverse inclusion, let $z=\mu\cdot\nu\in T(G/L)$ be a generalized
product of $a$ and $b$, where $\mu$ is a multiplicative translate of $a$ and
$\nu$ is a multiplicative translate of $b$. Since $\langle a\rangle$ is a
Tambara ideal containing $a$, it contains every multiplicative translate of $a$,
so $\mu\in \langle a\rangle(G/L)$. Similarly, $\nu\in \langle b\rangle(G/L)$.
Therefore
$z=\mu\cdot\nu\in \langle a\rangle(G/L)\cdot \langle b\rangle(G/L)\subseteq
(\langle a\rangle\langle b\rangle)(G/L)$.
Thus every generalized product lies in $\langle a\rangle\langle b\rangle$, so
$J\subseteq \langle a\rangle\langle b\rangle$, and hence
$\langle a\rangle\langle b\rangle = J$.
\end{proof}

\begin{Cor}\label{cor:fg-product}
The product of two finitely generated Tambara ideals is finitely generated.
\end{Cor}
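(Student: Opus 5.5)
The plan is to reduce to the case of principal ideals and then apply \Cref{cor:product-generators} together with \Cref{rem:finitely-many-translates}. Let $I=\langle a_1,\dots,a_m\rangle$ and $J=\langle b_1,\dots,b_n\rangle$ with $a_i\in T(G/H_i)$ and $b_j\in T(G/K_j)$. We have $I=\sum_i\langle a_i\rangle$ and $J=\sum_j\langle b_j\rangle$; these sums are Tambara ideals by \Cref{rem:generated-ideal}. The key claim is
\[
IJ=\sum_{i,j}\langle a_i\rangle\langle b_j\rangle .
\]
Once this is established, \Cref{cor:product-generators} shows that each $\langle a_i\rangle\langle b_j\rangle$ is generated by the generalized products of $a_i$ and $b_j$. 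By \Cref{rem:finitely-many-translates} there are only finitely many of these. So $IJ$ is generated by the finite union of these sets over all pairs $(i,j)$, and is therefore finitely generated.

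To prove the claim, I would show containment in both directions.

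\emph{The inclusion $\supseteq$.} Each $\langle a_i\rangle\subseteq I$ and $\langle b_j\rangle\subseteq J$. The product is monotone: it is generated by the levelwise products, and these grow when the factors grow. Hence $\langle a_i\rangle\langle b_j\rangle\subseteq IJ$ for every pair $(i,j)$.

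\emph{The inclusion $\subseteq$.} Write $P=\sum_{i,j}\langle a_i\rangle\langle b_j\rangle$. It suffices to show that $I(G/L)\cdot J(G/L)\subseteq P(G/L)$ for every $L$. Take $x\in I(G/L)$ and $y\in J(G/L)$. Since $x\in\langle x\rangle_L$, \Cref{prop:product-criterion-Q} gives $xy\in P$ as soon as $\langle x\rangle\langle y\rangle\subseteq P$. That in turn follows from $\langle x\rangle\subseteq I$, $\langle y\rangle\subseteq J$ and the statement $IJ\subseteq P$ itself, so this route is circular. I would avoid the circularity by working with the ideal quotient instead. Consider
\[
(P:J)\coloneqq\{\,x \;:\; \langle x\rangle\langle b_j\rangle\subseteq P \text{ for all } j\,\}.
\]
By \Cref{prop:product-criterion-Q}, membership in $(P:J)$ is the condition that $\mathcal{Q}(P,x,b_j)$ holds for all $j$. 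The main step is to show that $(P:J)$ is a Tambara ideal. It then contains each $a_i$, hence contains $I$.

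\emph{Why $(P:J)$ should be a Tambara ideal.} The natural definition of the product is the smallest ideal containing all products of elements. The claim amounts to distributivity of the product over sums of ideals. This should follow from the fact that the generalized-product condition is stable under the Tambara operations, which is exactly the content of Nakaoka's proof of \Cref{prop:product-criterion-Q}.

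The real work is therefore the distributivity $IJ=\sum\langle a_i\rangle\langle b_j\rangle$. I would try first to obtain it by iterating the argument of \Cref{prop:product-criterion-Q}. Alternatively, Nakaoka's results likely give it directly: I expect the product to commute with sums, in the sense that $(\sum_\lambda I_\lambda)J=\sum_\lambda I_\lambda J$. I would cite this if it is available. Otherwise I would prove it by checking that $\{x \mid \langle x\rangle J\subseteq P\}$ is closed under sums, restrictions, transfers, norms and conjugations. Closure under norms is expected to be the hardest case, where Tambara reciprocity (T6) enters.
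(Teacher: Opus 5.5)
Your overall structure matches the paper's. You reduce to $IJ=\sum_{i,j}\langle a_i\rangle\langle b_j\rangle$ and then apply \Cref{cor:product-generators} and \Cref{rem:finitely-many-translates}. The gap is the inclusion $IJ\subseteq P$, which you call ``the real work'' but do not prove. You sketch an ideal-quotient argument and leave its key step, that $(P:J)$ is a Tambara ideal, as something that ``should follow'' or might be citable.

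That route also does not escape the circularity you noticed. Closure of $(P:J)$ under restriction, transfer, norm, conjugation and ring multiplication is immediate from monotonicity of the product. Each of these operations sends $x$ into $\langle x\rangle$, so $\langle x'\rangle\langle b_j\rangle\subseteq\langle x\rangle\langle b_j\rangle\subseteq P$. Norms are therefore not the hard case. The only nontrivial closure is under addition. There, $\langle x+x'\rangle\subseteq\langle x\rangle+\langle x'\rangle$ leaves you needing $(\langle x\rangle+\langle x'\rangle)\langle b_j\rangle\subseteq\langle x\rangle\langle b_j\rangle+\langle x'\rangle\langle b_j\rangle$. That is the distributivity you are trying to prove, or else a Tambara-reciprocity computation with $\mathcal{Q}$ that you have not done.

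The missing idea is a fact you already cited: sums of Tambara ideals are computed levelwise (\Cref{rem:generated-ideal}). Hence $I(G/L)=\sum_i\langle a_i\rangle(G/L)$ and $J(G/L)=\sum_j\langle b_j\rangle(G/L)$. By distributivity in the ring $T(G/L)$,
\[
I(G/L)\cdot J(G/L)=\sum_{i,j}\langle a_i\rangle(G/L)\cdot\langle b_j\rangle(G/L)\subseteq\sum_{i,j}\bigl(\langle a_i\rangle\langle b_j\rangle\bigr)(G/L)=P(G/L).
\]
The inclusion is \Cref{def:tambara-product}, and the final equality is again levelwise sums. So $P$ is a Tambara ideal containing all generators of $IJ$, which gives $IJ\subseteq P$.

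Concretely, do not pass to $\langle x\rangle$ for an arbitrary $x\in I(G/L)$. Instead write $x=\sum_i x_i$ and $y=\sum_j y_j$ with $x_i\in\langle a_i\rangle(G/L)$ and $y_j\in\langle b_j\rangle(G/L)$. Then $xy=\sum_{i,j}x_iy_j$, and each term already lies in $P(G/L)$. This is how the paper argues, and it needs neither ideal quotients nor reciprocity.
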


\begin{proof}
Let $I=\langle x_1,\dots,x_m\rangle$ and $J=\langle y_1,\dots,y_n\rangle$.
Then $I=\sum_{i=1}^m \langle x_i\rangle$ and
$J=\sum_{j=1}^n \langle y_j\rangle$. Since Tambara products are defined as the
Tambara ideal generated by the levelwise ring products, and ring multiplication
distributes over finite sums, both sides of
$IJ=\sum_{i,j}\langle x_i\rangle\langle y_j\rangle$
are the Tambara ideal generated by
$\sum_{i,j}\langle x_i\rangle(G/H)\cdot \langle y_j\rangle(G/H)$ at each level
$G/H$. By \Cref{cor:product-generators}, each
$\langle x_i\rangle\langle y_j\rangle$ is generated by the finitely many
generalized products of $x_i$ and $y_j$ (\Cref{rem:finitely-many-translates}),
hence is finitely generated. Therefore $IJ$ is a finite sum of finitely
generated Tambara ideals, and is itself finitely generated.
\end{proof}

We now move on to prime ideals.

\begin{Def}\label{def:tambara-prime}
A proper ideal $P\trianglelefteq T$ is \emph{prime} if for all ideals
$I,J\trianglelefteq T$,
\[
IJ\subseteq P \Longrightarrow I\subseteq P \text{ or } J\subseteq P.
\]
We write $\Spec_{\Nak}(T)$ for the set of prime ideals of $T$.
\end{Def}

\begin{Rem}\label{rem:levels-not-prime}
Unlike for Tambara ideals, the levels $P(G/H)$ of a Nakaoka-prime ideal $P$ are
\emph{not} necessarily prime ideals of the rings $T(G/H)$; they are, however,
radical ideals \cite[Theorem~4.7]{chan2025tambaraaffineline}.
\end{Rem}

Prime ideals can also be characterised in terms of generalized products.

\begin{Thm}[{\cite[Proposition~4.2]{Nakaoka2014Spectrum}}]\label{thm:Q-characterization-prime}
A proper ideal $P\trianglelefteq T$ is prime if and only if for any $a\in T(G/H)$,
$b\in T(G/H')$, the statement $\mathcal{Q}(P,a,b)$ implies $a\in P(G/H)$ or
$b\in P(G/H')$.
\end{Thm}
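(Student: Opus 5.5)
To prove \Cref{thm:Q-characterization-prime}, I would deduce both implications from \Cref{prop:product-criterion-Q}, working throughout with principal Tambara ideals $\langle a\rangle$ and $\langle b\rangle$.

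\emph{Primality implies the $\mathcal{Q}$-condition.} Suppose $P$ is prime and $\mathcal{Q}(P,a,b)$ holds. By \Cref{prop:product-criterion-Q}, $\langle a\rangle\langle b\rangle\subseteq P$. Primality then gives $\langle a\rangle\subseteq P$ or $\langle b\rangle\subseteq P$. In particular $a\in P(G/H)$ or $b\in P(G/H')$.

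\emph{The $\mathcal{Q}$-condition implies primality.} Suppose $IJ\subseteq P$ but $I\not\subseteq P$ and $J\not\subseteq P$.
\begin{enumerate}
\item Choose elements $a\in I(G/H)\setminus P(G/H)$ and $b\in J(G/H')\setminus P(G/H')$, possibly at different levels.
\item Since $\langle a\rangle\subseteq I$ and $\langle b\rangle\subseteq J$, it suffices to check that the product is monotone: $\langle a\rangle\langle b\rangle\subseteq IJ\subseteq P$.
\item Monotonicity is clear from \Cref{def:tambara-product}. The levelwise products satisfy $\langle a\rangle(G/L)\cdot\langle b\rangle(G/L)\subseteq I(G/L)\cdot J(G/L)$. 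Hence the Tambara ideal generated by the former is contained in the one generated by the latter.
\item By \Cref{prop:product-criterion-Q}, $\mathcal{Q}(P,a,b)$ holds. The hypothesis then forces $a\in P$ or $b\in P$, a contradiction.
\end{enumerate}

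There is essentially no obstacle once \Cref{prop:product-criterion-Q} is granted, since all the substance lies in that criterion. The only point needing care is the monotonicity of the Tambara product, which the definition handles directly. Properness of $P$ is assumed on both sides of the equivalence, so it needs no separate treatment.
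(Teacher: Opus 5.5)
Your proof is correct. Both directions follow from \Cref{prop:product-criterion-Q}, used once in each direction, together with monotonicity of the Tambara product. Choosing witnesses at possibly different levels causes no trouble, because the criterion allows $a$ and $b$ to live at arbitrary levels.

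The paper gives no proof of its own for this statement; it simply imports it from Nakaoka. Your deduction uses the same step the paper relies on in the proof of \Cref{thm:points-primes}, namely passing between $\mathcal{Q}(P,a,b)$ and $\langle a\rangle\langle b\rangle\subseteq P$.
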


\begin{Def}\label{def:nakaoka-topology}
The \emph{Nakaoka spectrum} $\Spec_{\Nak}(T)$ is topologised by declaring the
sets
$V_H(x)\coloneqq \{P\in \Spec_{\Nak}(T)\mid x\in P(G/H)\}$,
for $H\le G$ and $x\in T(G/H)$,
to be a subbasis for the closed sets. The complementary subbasic opens are
$D_H(x)\coloneqq \Spec_{\Nak}(T)\setminus V_H(x)$. When $G=1$ this recovers
the classical Zariski topology.
\end{Def}

\begin{Rem}\label{rem:D-basis}
Although the sets $D_H(x)$ are defined as subbasic opens, they in fact form a
basis for the Nakaoka topology. Indeed, let
$P\in D_H(x)\cap D_{H'}(y)$. Then $x\notin P(G/H)$ and $y\notin P(G/H')$. Since
$P$ is prime, the contrapositive of \Cref{thm:Q-characterization-prime} shows
that $\mathcal{Q}(P,x,y)$ fails. Hence there exists a generalized product
$z\in T(G/L)$ of $x$ and $y$ with $z\notin P(G/L)$, so $P\in D_L(z)$.

Moreover, $D_L(z)\subseteq D_H(x)\cap D_{H'}(y)$. For if $Q$ is prime and
$x\in Q(G/H)$, then $Q$, being a Tambara ideal, contains every multiplicative
translate of $x$, hence every generalized product of $x$ and $y$, in particular
$z$. Thus $z\notin Q(G/L)$ implies $x\notin Q(G/H)$, and similarly
$z\notin Q(G/L)$ implies $y\notin Q(G/H')$.
\end{Rem}

\begin{Def}[Nakaoka]\label{def:tambara-radical}
Let $I\trianglelefteq T$ be a Tambara ideal. The \emph{radical} of $I$, denoted
$\sqrt{I}$, is the Tambara ideal defined by
\[
(\sqrt{I})(G/H)\coloneqq
\{\,x\in T(G/H)\mid \exists\, n\ge 1\ \text{such that}\
\langle x\rangle^{n}\subseteq I\,\},
\]
where $\langle x\rangle\trianglelefteq T$ denotes the Tambara ideal generated by
$x$ and $\langle x\rangle^{n}$ is its $n$-fold ideal product. An ideal $I$ is
\emph{radical} if $I=\sqrt{I}$.
\end{Def}

The following is \cite[Corollary~4.7]{ChanSpitz2026RadicalsNilpotents}.

\begin{Prop}[Chan--Spitz]\label{prop:levelwise-radical}
For any Tambara ideal $I\trianglelefteq T$ and subgroup $H\le G$, the ring ideal
$(\sqrt{I})(G/H)\subseteq T(G/H)$ is the ordinary radical of $I(G/H)$. Moreover,
\[
\sqrt{I}=\bigcap_{\substack{P\in\Spec_{\Nak}(T)\\ I\subseteq P}} P.
\]
In particular, $\sqrt{I}\trianglelefteq T$ is a radical Tambara ideal containing
$I$.
\end{Prop}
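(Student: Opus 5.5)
This is a result of Chan and Spitz, and the paper treats it as a black box; still, the following is how I would establish it. The plan is to show three things:
\[
\sqrt{I}\subseteq\bigcap_{P\supseteq I}P,\qquad \sqrt{I(G/H)}\subseteq(\sqrt{I})(G/H),\qquad \Bigl(\bigcap_{P\supseteq I}P\Bigr)(G/H)\subseteq\sqrt{I(G/H)}.
\]
Here $\sqrt{I(G/H)}$ is the ordinary ring radical. Chaining them gives all the claimed equalities. Radicality of $\sqrt{I}$ then follows: intersections of primes are Tambara ideals (\Cref{rem:generated-ideal}), and the levelwise ring radical of a levelwise radical ideal is itself.

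\textbf{First inclusion.} Let $x\in(\sqrt I)(G/H)$, so $\langle x\rangle^n\subseteq I\subseteq P$ for some $n\ge 1$ and every prime $P\supseteq I$. Iterating \Cref{def:tambara-prime} shows that $\langle x\rangle\subseteq P$, hence $x\in P(G/H)$.

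\textbf{Second inclusion.} Suppose $x^m\in I(G/H)$, and we want $\langle x\rangle^n\subseteq I$ for some $n$. By \Cref{cor:product-generators} and induction, $\langle x\rangle^n$ is generated by iterated generalized products of $x$. These are products in $T(G/L)$ of $n$ multiplicative translates $N^L_{gKg^{-1}}c_g r^H_K(x)$. Since $G$ is finite, only finitely many distinct translates occur; call them $\mu_1,\dots,\mu_s$. Each translate of $x^m$ is $\mu_j^m$, because norms, conjugations and restrictions are multiplicative, and $\mu_j^m\in I(G/L)$ since $I$ is closed under these maps. Take $n=s(m-1)+1$. By pigeonhole, any product of $n$ translates contains some $\mu_j$ at least $m$ times, so it lies in $I(G/L)$. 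Hence all generators lie in $I$, and $x\in(\sqrt I)(G/H)$.

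\textbf{Third inclusion (the main obstacle).} Given $x\in T(G/H)$ not in $\sqrt{I(G/H)}$, I must produce a Nakaoka prime $P\supseteq I$ with $x\notin P(G/H)$. I would mimic Krull's argument. Let $S$ be the set of all multiplicative translates of all powers $x^k$, at every level. I need $S$ to be closed under generalized products up to multiplicative translates, meaning that a generalized product of two elements of $S$ should have a translate that is a product of elements of $S$. This uses the multiplicative double coset formula (T4) and (T1)--(T2). Now apply Zorn's lemma to Tambara ideals $J\supseteq I$ disjoint from $S$. The poset is nonempty, since $I$ itself is disjoint from $S$: otherwise some translate $N c r(x^k)\in I$, and restricting back to $H$ via (T4) gives a product of conjugate-restrictions of $x^k$ lying in $I(G/H)$. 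Then I would need the norm-restriction closure to deduce that some power of $x$ lies in $I(G/H)$. This step is delicate. I would try first to restrict to the trivial subgroup and use conjugation invariance (I5) to collect factors into a power, but this only controls elements at level $e$, so more care is needed here. Given a maximal $P$, it is prime by \Cref{thm:Q-characterization-prime}. If $a,b\notin P$, maximality gives elements of $S$ in $P+\langle a\rangle$ and in $P+\langle b\rangle$. Multiplying and using the closure property of $S$ then forces some generalized product of $a$ and $b$ to lie outside $P$, so $\mathcal Q(P,a,b)$ fails. I expect the closure property of $S$ and the nonemptiness check to be the genuine difficulty; this is precisely the content of the Chan--Spitz nilpotence analysis.
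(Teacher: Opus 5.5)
The paper gives no proof of this statement; it cites Chan--Spitz. Your outline therefore has to stand on its own.

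Your first two inclusions are sound. One small correction: a generator of $\langle x\rangle^n$ is a product of \emph{at least} $n$ translates, because restricting a norm via (T4) splits one translate into several. This does not affect the pigeonhole bound.

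The genuine gap is the third inclusion, and it is not merely delicate. With $S$ the set of all multiplicative translates of powers of $x$, the poset you apply Zorn's lemma to can be empty. Take $T=A_{C_2}$, $I=0$ and $x=t-2\in A(C_2)=\bbZ[t]/(t^2-2t)$. Then $x^k=(-2)^{k-1}x\neq 0$, so $x\notin\sqrt{I(C_2/C_2)}$. Yet $r^{C_2}_e(x)=0$, so $0\in S$ and no ideal avoids $S$. Restrictions need not be injective, so translates of $x$ can be nilpotent modulo $I$ when $x$ is not. No manipulation with (T4) or (I5) can repair the nonemptiness check.

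The missing idea is to keep the multiplicative set at the single level $H$. Let $S=\{x^k\}_{k\ge 0}\subseteq T(G/H)$, and apply Zorn's lemma to the Tambara ideals $J\supseteq I$ with $J(G/H)\cap S=\varnothing$. Unions of chains are Tambara ideals, and $I$ belongs to this set exactly because $x\notin\sqrt{I(G/H)}$. A maximal element $P$ is proper, since $1\in P(G/G)$ would give $1=r^G_H(1)\in P(G/H)$.

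$P$ is prime directly from \Cref{def:tambara-prime}, with no closure property of translates and no appeal to $\mathcal{Q}$. Suppose $J_1J_2\subseteq P$ with $J_1,J_2\not\subseteq P$. Maximality and the levelwise description of sums give $x^{k_1}\in P(G/H)+J_1(G/H)$ and $x^{k_2}\in P(G/H)+J_2(G/H)$. Hence
\[
x^{k_1+k_2}\in P(G/H)+J_1(G/H)\cdot J_2(G/H)\subseteq P(G/H)+(J_1J_2)(G/H)\subseteq P(G/H),
\]
a contradiction. Equivalently, for a ring prime $\mathfrak{p}\supseteq I(G/H)$ of $T(G/H)$ with $x\notin\mathfrak{p}$, the largest Tambara ideal whose $H$-level lies in $\mathfrak{p}$ is Nakaoka-prime by the same computation.

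This $P$ contains $I$ and misses $x$ at level $H$, which completes your chain of inclusions.
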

We now establish a Tambara analog of \Cref{lem:ring-radical-product}. We start with the following. 
\begin{Lem}\label{lem:radical-product-levelwise}
    For any Tambara ideals $I, J$ and any  subgroup $H \le G$,
    \[
        \sqrt{(IJ)(G/H)} = \sqrt{I(G/H)} \cap \sqrt{J(G/H)}.
    \]
\end{Lem}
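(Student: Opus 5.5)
We prove the two inclusions separately; here $\sqrt{\,\cdot\,}$ means the ordinary ring radical in $T(G/H)$. Both inclusions use only elementary levelwise facts plus the fact that Tambara ideals are closed under ring multiplication at each level.

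For the inclusion ``$\subseteq$'', it suffices to show $(IJ)(G/H)\subseteq I(G/H)\cap J(G/H)$ for every $H$. Taking ring radicals is monotone, so this gives $\sqrt{(IJ)(G/H)}\subseteq\sqrt{I(G/H)\cap J(G/H)}\subseteq\sqrt{I(G/H)}\cap\sqrt{J(G/H)}$.
\begin{itemize}
\item The levelwise intersection $I\cap J$ is a Tambara ideal by \Cref{rem:generated-ideal}.
\item At every level $K$, each product $I(G/K)\cdot J(G/K)$ is contained in $I(G/K)\cap J(G/K)$, because both factors are ring ideals.
\item Hence $I\cap J$ is a Tambara ideal containing all the levelwise products. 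Since $IJ$ is by \Cref{def:tambara-product} the smallest such Tambara ideal, we get $IJ\subseteq I\cap J$.
\end{itemize}

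For the inclusion ``$\supseteq$'', let $x\in\sqrt{I(G/H)}\cap\sqrt{J(G/H)}$. Then $x^m\in I(G/H)$ and $x^n\in J(G/H)$ for some $m,n\ge1$. Therefore
\[
x^{m+n}=x^m\cdot x^n\in I(G/H)\cdot J(G/H)\subseteq (IJ)(G/H),
\]
where the last containment holds because $IJ$ contains the levelwise products by definition. So $x\in\sqrt{(IJ)(G/H)}$.

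No step is a serious obstacle. The only point needing care is that $(IJ)(G/H)$ may be strictly larger than $I(G/H)J(G/H)$ (\Cref{rem:product-not-levelwise}). This matters only for ``$\subseteq$'', and there it is handled by the minimality argument above rather than by any levelwise description of $IJ$.

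For later use, combining this lemma with \Cref{prop:levelwise-radical} should give $\sqrt{IJ}=\sqrt{I}\cap\sqrt{J}$ as Tambara ideals.
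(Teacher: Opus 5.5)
Your proposal is correct and essentially matches the paper's proof. Both arguments rest on the sandwich $I(G/H)\cdot J(G/H)\subseteq (IJ)(G/H)\subseteq I(G/H)\cap J(G/H)$ followed by taking radicals. The paper cites \Cref{lem:ring-radical-product} for the outer terms, whereas you inline that ring-theoretic argument and spell out why $IJ\subseteq I\cap J$ holds via minimality.
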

\begin{proof}
Note that there are inclusions
    \[
        I(G/H) \cdot J(G/H) \subseteq (IJ)(G/H) \subseteq I(G/H) \cap J(G/H),
    \]
    where the first follows from the definition of the Tambara product
    (\Cref{def:tambara-product}) and the second from $IJ \subseteq I$ and
    $IJ \subseteq J$. Taking radicals, we get
    \[
        \sqrt{I(G/H) \cdot J(G/H)}
        \subseteq \sqrt{(IJ)(G/H)}
        \subseteq \sqrt{I(G/H) \cap J(G/H)}.
    \]
    By \Cref{lem:ring-radical-product} applied in the ring $T(G/H)$, the
    outer two terms are both equal to $\sqrt{I(G/H)} \cap \sqrt{J(G/H)}$,
    and the claim follows.
\end{proof}

\begin{Prop}\label{prop:radical-product-intersection-general}
    For any Tambara ideals $I, J$,
    \[
        \sqrt{IJ} = \sqrt{I} \cap \sqrt{J}.
    \]
\end{Prop}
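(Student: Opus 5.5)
The plan is to check the equality one level at a time, since two Tambara ideals coincide exactly when they agree at every level $G/H$. The two ingredients are \Cref{prop:levelwise-radical}, which identifies each level of a Tambara radical with an ordinary ring radical, and \Cref{lem:radical-product-levelwise}, which handles the product at a single level.

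Fix $H\le G$. First apply \Cref{prop:levelwise-radical} to the Tambara ideal $IJ$ (it is one, by \Cref{def:tambara-product}). This gives
\[
(\sqrt{IJ})(G/H)=\sqrt{(IJ)(G/H)},
\]
where the right-hand side is the radical in the ring $T(G/H)$. Then \Cref{lem:radical-product-levelwise} rewrites this as $\sqrt{I(G/H)}\cap\sqrt{J(G/H)}$.

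Next apply \Cref{prop:levelwise-radical} to $I$ and to $J$ separately. This identifies $\sqrt{I(G/H)}$ with $(\sqrt{I})(G/H)$ and $\sqrt{J(G/H)}$ with $(\sqrt{J})(G/H)$. So the previous expression becomes $(\sqrt{I})(G/H)\cap(\sqrt{J})(G/H)$.

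Finally, intersections of Tambara ideals are computed levelwise (\Cref{rem:generated-ideal}). Hence this last expression is exactly $(\sqrt{I}\cap\sqrt{J})(G/H)$. Since $H$ was arbitrary, $\sqrt{IJ}=\sqrt{I}\cap\sqrt{J}$ as Tambara ideals.

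There is no real obstacle here. The substantive input is Chan--Spitz's levelwise computation of radicals, which we are allowed to assume. The only point needing care is that the radical in \Cref{def:tambara-radical} is defined via powers of $\langle x\rangle$, not levelwise. We therefore have to pass through \Cref{prop:levelwise-radical} at each step rather than reason directly with elements. Done that way, the argument is a single chain of equalities at each level.
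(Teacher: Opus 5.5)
Your proposal is correct and is essentially the paper's own proof. Both work levelwise through the same chain of four equalities: \Cref{prop:levelwise-radical} applied to $IJ$, then \Cref{lem:radical-product-levelwise}, then \Cref{prop:levelwise-radical} applied to $I$ and $J$, and finally the fact that intersections of Tambara ideals are computed levelwise.
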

\begin{proof}
    We argue levelwise. For any subgroup $H \le G$,
    \begin{align*}
        (\sqrt{IJ})(G/H)
        &= \sqrt{(IJ)(G/H)} \\
        &= \sqrt{I(G/H)} \cap \sqrt{J(G/H)} \\
        &= (\sqrt{I})(G/H) \cap (\sqrt{J})(G/H) \\
        &= (\sqrt{I} \cap \sqrt{J})(G/H),
    \end{align*}
    where the first and third equalities follow from \Cref{prop:levelwise-radical},
    the second is \Cref{lem:radical-product-levelwise}, and the fourth is
    the definition of the intersection of Tambara ideals. Since $H \le G$
    was arbitrary, $\sqrt{IJ} = \sqrt{I} \cap \sqrt{J}$.
\end{proof}

\section{The frame of radical Tambara ideals}\label{sec:zariski-tambara}

Fix a finite group $G$ and a $G$-Tambara functor $T$. In this section, we construct the point-free Nakaoka spectrum. The arguments parallel those of
\Cref{sec:ring-zariski}, with the identity $\sqrt{IJ}=\sqrt{I}\cap\sqrt{J}$
(\Cref{prop:radical-product-intersection-general}) playing the same key
role as its ring-theoretic counterpart (\Cref{lem:ring-radical-product}).

\begin{Not}\label{not:radid-tambara}
Let $\RadId_G(T)$ denote the set of radical Tambara ideals of $T$, ordered by
inclusion.
\end{Not}

\begin{Prop}\label{prop:radid-frame}
$\RadId_G(T)$ is a frame. Meets are given by levelwise intersections, and joins
are given by
$\bigvee_{\lambda} I_\lambda = \sqrt{\sum_{\lambda} I_\lambda}$,
where $\sum_{\lambda} I_\lambda$ denotes the Tambara ideal with
$(\sum_{\lambda} I_\lambda)(G/H)=\sum_{\lambda} I_\lambda(G/H)$.
\end{Prop}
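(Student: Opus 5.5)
The plan is to follow the proof of \Cref{prop:radid-frame-ring}'s ring analogue, \Cref{prop:radid-ring-frame}, verbatim, substituting \Cref{prop:radical-product-intersection-general} and \Cref{prop:levelwise-radical} for their ring counterparts.

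\emph{Step 1: meets.} Let $\{I_\lambda\}$ be a family of radical Tambara ideals. By \Cref{rem:generated-ideal}, the levelwise intersection $\bigcap_\lambda I_\lambda$ is a Tambara ideal. It is radical because, by \Cref{prop:levelwise-radical}, the radical is computed levelwise, and an intersection of radical ring ideals is radical. Hence $\RadId_G(T)$ has all meets, given levelwise, and is a complete lattice. The empty meet is $T$, which is radical.

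\emph{Step 2: joins.} Set $J=\sum_\lambda I_\lambda$, a Tambara ideal by \Cref{rem:generated-ideal}. By \Cref{prop:levelwise-radical}, $\sqrt{J}$ is a radical Tambara ideal containing $J$, so it is an upper bound for the $I_\lambda$. If $K$ is any radical upper bound, then $J\subseteq K$. Since $\sqrt{-}$ is monotone (clear from the levelwise description), $\sqrt{J}\subseteq\sqrt{K}=K$. So $\sqrt{J}$ is the join.

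\emph{Step 3: distributivity.} For radical $K$ and radical $J_\lambda$, I want
\[
K\cap\sqrt{\textstyle\sum_\lambda J_\lambda}=\sqrt{\textstyle\sum_\lambda (K\cap J_\lambda)}.
\]
The inclusion $\supseteq$ is automatic. For the chain of equalities, first use \Cref{prop:radical-product-intersection-general} together with $K=\sqrt K$ and $\sqrt{\sqrt{M}}=\sqrt{M}$ (levelwise) to write the left side as
\[
\sqrt{K}\cap\sqrt{\textstyle\sum J_\lambda}=\sqrt{K\cdot\textstyle\sum_\lambda J_\lambda}.
\]

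The key step, and the place where the Tambara setting differs from the ring case, is the identity $K\cdot\sum_\lambda J_\lambda=\sum_\lambda KJ_\lambda$. Both sides are Tambara ideals. The right side contains the levelwise products $K(G/H)\cdot\sum_\lambda J_\lambda(G/H)=\sum_\lambda K(G/H)J_\lambda(G/H)$, which generate the left side, giving $\subseteq$. Conversely, each $KJ_\lambda$ is generated by $K(G/H)J_\lambda(G/H)\subseteq K(G/H)\cdot\sum J_\lambda(G/H)$, giving $\supseteq$. This is the same argument as in \Cref{cor:fg-product}, now for arbitrary families, and it uses only the "generated by levelwise products" definition in \Cref{def:tambara-product}.

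Finally,
\[
\sqrt{\textstyle\sum_\lambda KJ_\lambda}=\sqrt{\textstyle\sum_\lambda\sqrt{KJ_\lambda}}.
\]
The inclusion $\subseteq$ is monotonicity. For $\supseteq$, note $\sqrt{KJ_\lambda}\subseteq\sqrt{\sum_\mu KJ_\mu}$, so the sum lies in a radical ideal and we can take radicals. Then $\sqrt{KJ_\lambda}=K\cap J_\lambda$ by \Cref{prop:radical-product-intersection-general}, which finishes the chain.

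The expected main obstacle is the distribution of the Tambara product over infinite sums, since products are not levelwise (\Cref{rem:product-not-levelwise}). The generator argument above handles it. Everything else reduces to the levelwise radical of \Cref{prop:levelwise-radical}.
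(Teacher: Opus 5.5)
Your proof is correct, and your Steps 1 and 2 are the same as the paper's. The routes diverge at distributivity.

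The paper never uses products there. By \Cref{prop:levelwise-radical}, the join $\sqrt{\sum_\lambda J_\lambda}$ is computed levelwise, meets are levelwise intersections, and the levels $K(G/H)$, $J_\lambda(G/H)$ are radical ring ideals. So the identity $K\cap\sqrt{\sum_\lambda J_\lambda}=\sqrt{\sum_\lambda(K\cap J_\lambda)}$ is checked separately in each ring $T(G/H)$, where it is just distributivity in the frame $\RadId(T(G/H))$ of \Cref{prop:radid-ring-frame}. In effect, $I\mapsto (I(G/H))_{H\le G}$ embeds $\RadId_G(T)$ as a subframe of $\prod_{H\le G}\RadId(T(G/H))$, and distributivity is inherited.

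You instead run the ring-theoretic chain of equalities at the Tambara level. This obliges you to show that the non-levelwise Tambara product distributes over arbitrary sums, $K\cdot\sum_\lambda J_\lambda=\sum_\lambda KJ_\lambda$. Your generator argument for this is correct, and the inclusion $\supseteq$ is just monotonicity of the product.

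What each route buys: yours is longer, but it is the formal quantale-style argument. Apart from monotonicity and idempotence of $\sqrt{-}$, its only input is $\sqrt{IJ}=\sqrt{I}\cap\sqrt{J}$ from \Cref{prop:radical-product-intersection-general}. So it works verbatim for any closure operator on a commutative quantale of ideals satisfying that identity. The paper's route is shorter but leans directly on the levelwise computation of arbitrary joins. In this paper, \Cref{prop:radical-product-intersection-general} is itself proved from \Cref{prop:levelwise-radical}, so neither argument avoids the Chan--Spitz input.
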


\begin{proof}
Intersections of radical Tambara ideals are radical, so $\RadId_G(T)$ has
arbitrary meets.

Let $\{I_\lambda\}_\lambda$ be a family of radical Tambara ideals and set
$J=\sum_{\lambda} I_\lambda$. By \Cref{rem:generated-ideal}, this levelwise sum
is a Tambara ideal. Its radical $\sqrt{J}$ (in the sense of
\Cref{def:tambara-radical}) is a
radical Tambara ideal containing each $I_\lambda$, hence it is an upper bound.
If $K\in \RadId_G(T)$ is any other upper bound, then $J\subseteq K$, so
$\sqrt{J}\subseteq \sqrt{K}=K$ by monotonicity and idempotence of
$\sqrt{(-)}$. Thus $\sqrt{J}$ is the join.

Finally, we verify frame distributivity. Fix $K\in \RadId_G(T)$ and a family
$\{J_\lambda\}_\lambda$ in $\RadId_G(T)$. By \Cref{prop:levelwise-radical}
applied to $\sum_\lambda J_\lambda$, the join $\sqrt{\sum_\lambda J_\lambda}$
is computed levelwise:
$(\sqrt{\sum_\lambda J_\lambda})(G/H)=\sqrt{\sum_\lambda J_\lambda(G/H)}$ in
$T(G/H)$. Since intersections of Tambara ideals are also computed levelwise,
it suffices to check the distributivity identity in each ring $T(G/H)$:
\[
K(G/H)\cap \sqrt{\sum_{\lambda} J_\lambda(G/H)}
=
\sqrt{\sum_{\lambda}\bigl(K(G/H)\cap J_\lambda(G/H)\bigr)}.
\]
This holds because $\RadId(T(G/H))$ is a frame (\Cref{prop:radid-ring-frame}).
\end{proof}

\begin{Def}\label{def:specfrm}
The \emph{frame-theoretic Nakaoka spectrum} of $T$ is
$\Spec_{\Frm}(T)\coloneqq\pt(\RadId_G(T))$, equipped with the topology of
\Cref{def:pt-frame}.
\end{Def}

\begin{Thm}\label{thm:points-primes}
There is a canonical homeomorphism
$\Spec_{\Frm}(T)\cong\Spec_{\Nak}(T)$,
identifying the basic open $U(I)=\{p\mid p(I)=1\}$ with the set
$\{P\in\Spec_{\Nak}(T)\mid I\not\subseteq P\}$. 
\end{Thm}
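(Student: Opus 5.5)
By \Cref{rem:points-meet-prime}, points of $\RadId_G(T)$ correspond bijectively to meet-prime elements. These are radical Tambara ideals $P\neq T$ such that for all radical Tambara ideals $I,J$,
\[
I\cap J\subseteq P \Longrightarrow I\subseteq P \text{ or } J\subseteq P.
\]
So the plan is to show that the meet-prime elements of $\RadId_G(T)$ are exactly the Nakaoka primes. Then we check that the two topologies correspond.

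\textbf{Nakaoka primes are meet-prime.} Let $P$ be a Nakaoka prime. By \Cref{rem:levels-not-prime} (or by \Cref{prop:levelwise-radical}, since $P$ is among the primes containing itself) $P$ is radical, and it is proper. Note that $P\neq T$ as a Tambara ideal is equivalent to $1\notin P(G/G)$, since an ideal containing $1$ at level $G/G$ contains $1$ at every level by restriction. If $I\cap J\subseteq P$, then $IJ\subseteq I\cap J\subseteq P$, so $I\subseteq P$ or $J\subseteq P$.

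\textbf{Meet-prime elements are Nakaoka primes.} Let $P$ be meet-prime and let $I,J$ be arbitrary Tambara ideals with $IJ\subseteq P$. Then by \Cref{prop:radical-product-intersection-general},
\[
\sqrt I\cap\sqrt J=\sqrt{IJ}\subseteq\sqrt P=P.
\]
Since $\sqrt I$ and $\sqrt J$ are radical, meet-primality gives $\sqrt I\subseteq P$ or $\sqrt J\subseteq P$, and hence $I\subseteq P$ or $J\subseteq P$. Properness holds because $P\neq T$. This direction is the only place where real input is needed, and it is entirely absorbed by \Cref{prop:radical-product-intersection-general} together with \Cref{prop:levelwise-radical}. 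I expect no serious obstacle, beyond being careful that meet-primality is only assumed against radical ideals, which is exactly why the radicals must be taken first.

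\textbf{The two constructions are inverse and the topologies match.} The bijection sends a point $p$ to $q_p$, and a prime $P$ to $p_P$. Under it, $p_P(I)=1$ iff $I\not\subseteq P$, so $U(I)$ maps onto $\{P\mid I\not\subseteq P\}$.

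It remains to match topologies. For $x\in T(G/H)$, put $I=\sqrt{\langle x\rangle_H}$. Because $P$ is radical, $I\not\subseteq P$ iff $x\notin P(G/H)$. Hence each subbasic open $D_H(x)$ is of the form $U(I)$, and it follows that the Nakaoka topology is coarser than the transported frame topology.

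Conversely, for a general radical $I$ we have
\[
\{P\mid I\not\subseteq P\}=\bigcup_{H}\bigcup_{x\in I(G/H)}D_H(x),
\]
which is open in the Nakaoka topology. Thus the bijection is a homeomorphism.
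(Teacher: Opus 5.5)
Your proof is correct, and its overall architecture is the same as the paper's. Points are identified with meet-prime elements, the identity $\sqrt{IJ}=\sqrt I\cap\sqrt J$ drives the argument, and the topologies are matched via $D_H(x)=U(\sqrt{\langle x\rangle_H})$ and $U(I)=\bigcup_{H}\bigcup_{x\in I(G/H)}D_H(x)$.

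The one genuine difference is in showing that a meet-prime element is Nakaoka-prime. The paper passes to the element-level criterion of \Cref{thm:Q-characterization-prime}: it assumes $\mathcal{Q}(P,a,b)$, uses \Cref{cor:product-generators} to get $\langle a\rangle\langle b\rangle\subseteq P$, and only then applies \Cref{prop:radical-product-intersection-general} to these principal ideals. You instead apply \Cref{prop:radical-product-intersection-general} directly to arbitrary ideals $I,J$ with $IJ\subseteq P$, and conclude from the definition of primality. That proposition is proved for arbitrary Tambara ideals, so your route is legitimate. It is also shorter, and it needs neither Nakaoka's $\mathcal{Q}$-characterization nor the generalized-product description of $\langle a\rangle\langle b\rangle$. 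The paper's route has the merit of mirroring the classical element-wise argument ($fg\in P\Rightarrow f\in P$ or $g\in P$) used in \Cref{thm:ring-points-primes}.

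You also make explicit two points the paper passes over:
\begin{enumerate}
\item a Nakaoka prime is radical, so it really is an element of $\RadId_G(T)$;
\item $P\neq T$ is equivalent to properness $1\notin P(G/G)$, by restricting $1$ to every level.
\end{enumerate}
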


\begin{proof}
By \Cref{rem:points-meet-prime}, points of $\RadId_G(T)$ correspond to
meet-prime elements, i.e., radical Tambara ideals $P\ne T$ such that
\begin{equation}\label{eq:meet-prime-radid}
I\cap J\subseteq P \Longrightarrow I\subseteq P\ \text{or}\ J\subseteq P
\end{equation}
for all $I,J\in\RadId_G(T)$. We show this is equivalent to $P$ being
Nakaoka-prime (\Cref{def:tambara-prime}).

Firstly, let $P$ be Nakaoka-prime and suppose $I\cap J\subseteq P$ for radical Tambara ideals $I,J$. Since $IJ\subseteq I\cap J\subseteq P$ and $P$ is Nakaoka-prime, $I\subseteq P$ or $J\subseteq P$.

Conversely, suppose that $P$ satisfies \eqref{eq:meet-prime-radid}. Since
$P\ne T$, it is proper. By
\Cref{thm:Q-characterization-prime}, it suffices to show that for all
$a\in T(G/H_1)$ and $b\in T(G/H_2)$,
$\mathcal{Q}(P,a,b)$ implies $a\in P(G/H_1)$ or $b\in P(G/H_2)$.
So assume $\mathcal{Q}(P,a,b)$. By \Cref{cor:product-generators}, the ideal
$\langle a\rangle\langle b\rangle$ is generated by the generalized products of
$a$ and $b$, hence $\langle a\rangle\langle b\rangle\subseteq P$. By
\Cref{prop:radical-product-intersection-general},
\[
\sqrt{\langle a\rangle}\cap \sqrt{\langle b\rangle}
= \sqrt{\langle a\rangle\langle b\rangle}
\subseteq P.
\]
Since $P$ is meet-prime, either $\sqrt{\langle a\rangle}\subseteq P$ or
$\sqrt{\langle b\rangle}\subseteq P$. As
$a\in \sqrt{\langle a\rangle}(G/H_1)$ and
$b\in \sqrt{\langle b\rangle}(G/H_2)$, it follows that $a\in P(G/H_1)$ or
$b\in P(G/H_2)$.

\smallskip\noindent
For the topology, note that $U(I)$ in $\pt(\RadId_G(T))$ corresponds to
$\{P\in\Spec_{\Nak}(T)\mid I\not\subseteq P\}$. The Nakaoka closed set
$V_H(x)$ corresponds to $\{P\mid \sqrt{\langle x\rangle_H}\subseteq P\}$,
since $x\in P(G/H)$ if and only if $\sqrt{\langle x\rangle_H}\subseteq P$ (because $P$ is
radical). Equivalently,
\[
D_H(x)=\{P\mid x\notin P(G/H)\}=U\bigl(\sqrt{\langle x\rangle_H}\bigr),
\]
so every Nakaoka basic open is open in $\Spec_{\Frm}(T)$. Conversely, if
$P\in U(I)$, then $I\not\subseteq P$, so there exist $H\le G$ and
$x\in I(G/H)\setminus P(G/H)$. Then $P\in D_H(x)\subseteq U(I)$, since any
prime $Q$ with $x\notin Q(G/H)$ cannot contain $I$. Hence
\[
U(I)=\bigcup_{H\le G}\bigcup_{x\in I(G/H)} D_H(x),
\]
so every basic open of $\Spec_{\Frm}(T)$ is open in the Nakaoka topology. Thus
the two topologies agree.
\end{proof}

\begin{Thm}\label{thm:spatiality}
The frame $\RadId_G(T)$ is spatial.
\end{Thm}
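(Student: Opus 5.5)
We will use the separation criterion of \Cref{prop:spatial-separation}, following the proof of \Cref{thm:ring-spatial} closely. Take radical Tambara ideals $I,J$ with $I\not\subseteq J$. We must produce a point $p$ of $\RadId_G(T)$ with $p(I)=1$ and $p(J)=0$. By \Cref{rem:points-meet-prime} and the proof of \Cref{thm:points-primes}, points correspond to Nakaoka primes $P$, via $p_P(K)=0$ if and only if $K\subseteq P$. So it suffices to find a prime $P\in\Spec_{\Nak}(T)$ with $J\subseteq P$ and $I\not\subseteq P$.

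First, since $I\not\subseteq J$ levelwise, there is a subgroup $H\le G$ and an element $x\in I(G/H)\setminus J(G/H)$. Second, because $J$ is radical, the second part of \Cref{prop:levelwise-radical} gives
\[
J=\sqrt{J}=\bigcap_{\substack{P\in\Spec_{\Nak}(T)\\ J\subseteq P}} P,
\]
with the intersection computed levelwise. Since $x\notin J(G/H)$, some prime $P\supseteq J$ must have $x\notin P(G/H)$. Third, this $P$ satisfies $I\not\subseteq P$, because $x\in I(G/H)\setminus P(G/H)$. The corresponding point $p_P$ therefore has $p_P(J)=0$ and $p_P(I)=1$, and \Cref{prop:spatial-separation} yields spatiality.

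There is one edge case: $J=T$ cannot occur, since then $I\subseteq J$. If the family of primes containing $J$ is empty, the intersection is interpreted as $T$, and the Chan--Spitz identity forces $J=T$. So the argument is uniform.

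The only substantive input is the description of a radical Tambara ideal as the intersection of the Nakaoka primes containing it. That is exactly \Cref{prop:levelwise-radical}, which we may assume, so I expect no real obstacle. The one point to check is that the correspondence between points and primes in \Cref{thm:points-primes} sends $p_P(K)=0$ exactly to $K\subseteq P$, which is immediate from \Cref{rem:points-meet-prime}.

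As a consequence, combining this with \Cref{thm:points-primes} gives $\RadId_G(T)\cong\Omega(\Spec_{\Nak}(T))$.
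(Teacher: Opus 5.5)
Your proposal is correct and follows the paper's proof essentially step for step. Like the paper, you apply the separation criterion, pick $x\in I(G/H)\setminus J(G/H)$, and use the Chan--Spitz description $J=\bigcap_{P\supseteq J}P$, computed levelwise, to find a prime $P\supseteq J$ with $x\notin P(G/H)$. You also handle the empty-intersection edge case the same way the paper does.
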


\begin{proof}
By \Cref{prop:spatial-separation}, it suffices to show that if $I\not\subseteq J$
for radical Tambara ideals $I,J$, there exists a Nakaoka prime $P$ with
$I\not\subseteq P$ and $J\subseteq P$. Since $I\not\subseteq J$, there exist
$H\le G$ and $x\in I(G/H)\setminus J(G/H)$. By \Cref{prop:levelwise-radical}
applied to $J$ (which is radical, so $J=\sqrt{J}$), we have
$J=\bigcap_{P\supseteq J}P$. In particular, there exists some prime containing
$J$ as otherwise the right-hand side would be $T$, contradicting $x\notin J(G/H)$.
Since intersections are computed levelwise,
$J(G/H)=\bigcap_{P\supseteq J}P(G/H)$. Hence some prime $P\supseteq J$
satisfies $x\notin P(G/H)$, and consequently $I\not\subseteq P$.
\end{proof}

\begin{Cor}\label{cor:frame-opens}
There is an isomorphism of frames
$\RadId_G(T)\cong\Omega(\Spec_{\Nak}(T))$. 
\end{Cor}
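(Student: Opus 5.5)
The isomorphism comes from composing the canonical map $\iota\colon\RadId_G(T)\to\Omega(\pt(\RadId_G(T)))$ of \Cref{def:spatial-frame} with the frame isomorphism induced by the homeomorphism of \Cref{thm:points-primes}.

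First, \Cref{thm:spatiality} says that $\RadId_G(T)$ is spatial. By definition this means $\iota_{\RadId_G(T)}\colon I\mapsto U(I)$ is an isomorphism of frames onto $\Omega(\Spec_{\Frm}(T))$.

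Second, \Cref{thm:points-primes} gives a homeomorphism $h\colon\Spec_{\Nak}(T)\to\Spec_{\Frm}(T)$, sending a prime $P$ to the point $p_P$ of \Cref{rem:points-meet-prime}. By functoriality of $\Omega$ (\Cref{exa:opens-frame}), $h^{-1}\colon\Omega(\Spec_{\Frm}(T))\to\Omega(\Spec_{\Nak}(T))$ is a frame homomorphism, with inverse $(h^{-1})^{-1}$. Hence it is a frame isomorphism.

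Composing the two maps gives a frame isomorphism
\[
\RadId_G(T)\xrightarrow{\ \cong\ }\Omega(\Spec_{\Nak}(T)),\qquad I\longmapsto\{P\in\Spec_{\Nak}(T)\mid I\not\subseteq P\}.
\]
The explicit formula comes from the identification of $U(I)$ in \Cref{thm:points-primes}. In particular, $\sqrt{\langle x\rangle_H}$ is sent to $D_H(x)$.

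There is no real obstacle: all the content sits in \Cref{thm:points-primes,thm:spatiality}. The only check needed is that the identification of $U(I)$ with $\{P\mid I\not\subseteq P\}$ in \Cref{thm:points-primes} is exactly the set-level map underlying $h$, so that the composite has the stated form. This is immediate from $p_P(I)=1\iff I\not\subseteq P$.
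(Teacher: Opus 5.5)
Your proposal is correct and is essentially the paper's own proof: it combines the spatiality of $\RadId_G(T)$ (\Cref{thm:spatiality}) with the homeomorphism $\Spec_{\Frm}(T)\cong\Spec_{\Nak}(T)$ of \Cref{thm:points-primes}. Your explicit description of the composite, $I\mapsto\{P\mid I\not\subseteq P\}$ with $\sqrt{\langle x\rangle_H}\mapsto D_H(x)$, adds detail the paper leaves implicit but does not change the argument.
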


\begin{proof}\sloppy
By \Cref{thm:spatiality}, $\RadId_G(T)\cong\Omega(\Spec_{\Frm}(T))$, and by
\Cref{thm:points-primes}, $\Spec_{\Frm}(T)\cong\Spec_{\Nak}(T)$.
\end{proof}

\section{Spectrality and functoriality}\label{sec:spectrality}

We now show that $\RadId_G(T)$ is coherent, and hence deduce that $\Spec_{\Nak}(T)$ is
spectral. Moreover, we show that a morphism of Tambara functors gives rise to a morphism of \emph{spectral} spaces after applying $\Spec_{\Nak}$. 

For the following, we recall that we write $\RadFg_G(T)$ for the set of radical finitely generated Tambara ideals, ordered by inclusion.

\begin{Lem}\label{lem:radfg-generates}
$\RadFg_G(T)$ generates $\RadId_G(T)$ under joins, i.e., every radical Tambara ideal $I$ satisfies $I=\bigvee_{H\le G}\bigvee_{x\in I(G/H)} \sqrt{\langle x\rangle_H}$.
\end{Lem}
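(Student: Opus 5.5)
The plan is to imitate the first step of the proof of \Cref{thm:ring-coherent}, using the description of joins in \Cref{prop:radid-frame}. First, each $\sqrt{\langle x\rangle_H}$ really lies in $\RadFg_G(T)$. I read $\RadFg_G(T)$ as the radicals of finitely generated Tambara ideals, as in the ring case where the compact elements are $\sqrt{(f_1,\dots,f_n)}$. Under that reading this is immediate: $\langle x\rangle_H$ is finitely generated by \Cref{def:fg-tambara-ideal}, and its radical is a radical Tambara ideal by \Cref{prop:levelwise-radical}. So once the displayed identity is proved, every radical Tambara ideal is a join of elements of $\RadFg_G(T)$, which is the first assertion.

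To prove the identity, fix $I\in\RadId_G(T)$ and write $J$ for the right-hand side. By \Cref{prop:radid-frame},
\[
J=\sqrt{\textstyle\sum_{H\le G}\sum_{x\in I(G/H)}\sqrt{\langle x\rangle_H}}.
\]
For the inclusion $J\subseteq I$, note first that for each $H$ and $x\in I(G/H)$ we have $\langle x\rangle_H\subseteq I$, because $\langle x\rangle_H$ is the smallest Tambara ideal containing $x$ at level $G/H$ (\Cref{rem:generated-ideal}). Monotonicity of the radical and $I=\sqrt{I}$ then give $\sqrt{\langle x\rangle_H}\subseteq I$. So $I$ is an upper bound for the family, and since $J$ is its least upper bound, $J\subseteq I$.

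For $I\subseteq J$, I argue levelwise. Given $H$ and $x\in I(G/H)$, we have $x\in\langle x\rangle_H(G/H)\subseteq\sqrt{\langle x\rangle_H}(G/H)$, and this term appears in the sum, so $x\in J(G/H)$. Since Tambara ideals are determined by their levels, $I\subseteq J$, and the two inclusions give equality.

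The only step I expect to need care is the bookkeeping about what ``radical finitely generated'' means. If $\RadFg_G(T)$ were instead read as ideals that are simultaneously radical and finitely generated as Tambara ideals, then $\sqrt{\langle x\rangle_H}$ need not be finitely generated, and the statement would fail in general. I would therefore fix the convention $\RadFg_G(T)=\{\sqrt{K}\mid K \text{ finitely generated}\}$, matching \Cref{thm:ring-coherent}. Beyond that, every step uses only the order-theoretic description of joins and \Cref{prop:levelwise-radical}.
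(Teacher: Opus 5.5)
Your proof is correct and follows the paper's argument. The inclusion of the join in $I$ (via minimality of $\langle x\rangle_H$ and $I=\sqrt{I}$) is identical; for the reverse inclusion you check elementwise that $x\in\sqrt{\langle x\rangle_H}(G/H)$ lies in the join, whereas the paper first shows $I=\sum_{H}\sum_{x\in I(G/H)}\langle x\rangle_H$ and then takes radicals. Your reading of $\RadFg_G(T)$ as radicals of finitely generated Tambara ideals matches the paper's own usage, e.g.\ $K=\sqrt{\langle x_1,\dots,x_n\rangle}$ in the compactness lemma.
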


\begin{proof}
Let $I\in \RadId_G(T)$. For each $H\le G$ and $x\in I(G/H)$, the principal
Tambara ideal $\langle x\rangle_H$ is contained in $I$ by minimality of
$\langle x\rangle_H$, hence
$\sqrt{\langle x\rangle_H}\subseteq \sqrt{I}=I$. Therefore
$\bigvee_{H\le G}\bigvee_{x\in I(G/H)} \sqrt{\langle x\rangle_H}
\subseteq I$.

For the reverse inclusion, consider the Tambara ideal
$J\coloneqq \sum_{H\le G}\sum_{x\in I(G/H)} \langle x\rangle_H$.
By construction, $J\subseteq I$ and for each $H$ we have
$I(G/H)\subseteq J(G/H)$, since
$x\in \langle x\rangle_H(G/H)\subseteq J(G/H)$. Hence $J(G/H)=I(G/H)$ for
all $H$, so $J=I$. Taking radicals and using that $I$ is radical gives
$I=\sqrt{I}=\sqrt{J}
=\sqrt{\sum_{H\le G}\sum_{x\in I(G/H)} \langle x\rangle_H}
= \bigvee_{H\le G}\bigvee_{x\in I(G/H)} \sqrt{\langle x\rangle_H}$.
\end{proof}

\begin{Lem}\label{lem:radfg-compact}
Every element of $\RadFg_G(T)$ is compact in $\RadId_G(T)$.
\end{Lem}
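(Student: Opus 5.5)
We follow the proof of \Cref{thm:ring-coherent}, replacing ring elements by the finitely many generators at their levels and using \Cref{prop:levelwise-radical} so that radical membership can be tested levelwise. Let $K\in\RadFg_G(T)$, so that $K=\sqrt{\langle x_1,\dots,x_n\rangle}$ with $x_i\in T(G/H_i)$, where $K$ is a radical Tambara ideal which is the radical of a finitely generated one. Interpreting $\RadFg_G(T)$ this way, rather than as a finitely generated ideal that happens to be radical, costs nothing: if $K$ is itself finitely generated, then $K=\sqrt{K}$ is of this form. Suppose $K\subseteq\bigvee_\lambda I_\lambda=\sqrt{\sum_\lambda I_\lambda}$, the join being described in \Cref{prop:radid-frame}.

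First, by \Cref{prop:levelwise-radical}, the level $(\sqrt{\sum_\lambda I_\lambda})(G/H_i)$ is the ordinary ring radical of $\sum_\lambda I_\lambda(G/H_i)$, the sum being levelwise by \Cref{rem:generated-ideal}. Since $x_i\in K(G/H_i)$, there is $N_i\ge1$ with $x_i^{N_i}\in\sum_\lambda I_\lambda(G/H_i)$. An element of a sum of ideals is a finite sum, so $x_i^{N_i}\in\sum_{\lambda\in\Lambda_i}I_\lambda(G/H_i)$ for some finite $\Lambda_i$. Put $\Lambda_0=\bigcup_i\Lambda_i$, which is finite.

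Next, set $S=\sum_{\lambda\in\Lambda_0}I_\lambda$, a Tambara ideal. By the levelwise description of radicals (\Cref{prop:levelwise-radical} again), $x_i\in\sqrt{S}(G/H_i)$ for each $i$. Since $\sqrt{S}$ is a Tambara ideal containing all $x_i$ at the correct levels, minimality gives $\langle x_1,\dots,x_n\rangle\subseteq\sqrt{S}$. Taking radicals and using idempotence yields $K\subseteq\sqrt{S}=\bigvee_{\lambda\in\Lambda_0}I_\lambda$, which is the required finite subcover.

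The only point needing care is avoiding Nakaoka's definition of the radical via $\langle x\rangle^n\subseteq I$, which would force us to control products of Tambara ideals. This is handled entirely by the Chan--Spitz levelwise description. Once that is used, the argument is pure ring theory at the single level $H_i$ together with the minimality of generated ideals.
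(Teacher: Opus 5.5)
Your proof is correct and follows essentially the same route as the paper: \Cref{prop:levelwise-radical} turns membership of each generator $x_i$ in $\sqrt{\sum_\lambda I_\lambda}$ into a ring-radical statement at level $G/H_i$, you extract a finite subfamily, and minimality of $\langle x_1,\dots,x_n\rangle$ finishes the argument. The only cosmetic difference is that you unpack the finite-sum step directly, whereas the paper cites compactness of $\sqrt{(x_i)}$ in $\RadId(T(G/H_i))$ from \Cref{thm:ring-coherent}.
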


\begin{proof}
Let $K\in \RadFg_G(T)$, say $K=\sqrt{\langle x_1,\dots,x_n\rangle}$ with
$x_i\in T(G/H_i)$. Suppose $K\subseteq \bigvee_\lambda I_\lambda
=\sqrt{\sum_\lambda I_\lambda}$. Then each $x_i\in
(\sqrt{\sum_\lambda I_\lambda})(G/H_i)$, which by
\Cref{prop:levelwise-radical} equals the ordinary ring radical
$\sqrt{\sum_\lambda I_\lambda(G/H_i)}$ in $T(G/H_i)$. Equivalently,
$\sqrt{(x_i)}\subseteq\sqrt{\sum_\lambda I_\lambda(G/H_i)}$ in
$\RadId(T(G/H_i))$.

By compactness of $\sqrt{(x_i)}$ in $\RadId(T(G/H_i))$
(\Cref{thm:ring-coherent}), there exists a finite $\Lambda_i$ with
$\sqrt{(x_i)}\subseteq\sqrt{\sum_{\lambda\in\Lambda_i} I_\lambda(G/H_i)}$, i.e.
$x_i\in \sqrt{\sum_{\lambda\in\Lambda_i} I_\lambda(G/H_i)}$. Let
$\Lambda_0=\Lambda_1\cup\cdots\cup\Lambda_n$. Then each
$x_i\in(\sqrt{\sum_{\lambda\in\Lambda_0} I_\lambda})(G/H_i)$ again by
\Cref{prop:levelwise-radical}.

Thus the Tambara ideal $\sqrt{\sum_{\lambda\in\Lambda_0} I_\lambda}$ contains
each $x_i$, hence contains $\langle x_1,\dots,x_n\rangle$, hence contains $K$.
\end{proof}

\begin{Lem}\label{lem:radfg-meets}
$\RadFg_G(T)$ is closed under finite meets in $\RadId_G(T)$.
\end{Lem}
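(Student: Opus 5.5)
The plan is to imitate the ring case (\Cref{thm:ring-coherent}), using \Cref{prop:radical-product-intersection-general} in place of \Cref{lem:ring-radical-product} and \Cref{cor:fg-product} for finite generation. We need two things: that the empty meet $\top=T$ lies in $\RadFg_G(T)$, and that $\RadFg_G(T)$ is closed under binary meets. By induction these give closure under all finite meets.

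For the top element, note that $T=\langle 1\rangle_G$ with $1\in T(G/G)$, and $T$ is radical. Hence $T\in\RadFg_G(T)$.

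For binary meets, let $K_1=\sqrt{I}$ and $K_2=\sqrt{J}$, where $I=\langle x_1,\dots,x_m\rangle$ and $J=\langle y_1,\dots,y_n\rangle$ are finitely generated Tambara ideals. By \Cref{prop:radical-product-intersection-general},
\[
K_1\cap K_2=\sqrt{I}\cap\sqrt{J}=\sqrt{IJ}.
\]
By \Cref{cor:fg-product}, $IJ$ is finitely generated. Concretely, it is generated by the finitely many generalized products of the pairs $x_i,y_j$, using \Cref{cor:product-generators} and \Cref{rem:finitely-many-translates}. Therefore $\sqrt{IJ}$ is the radical of a finitely generated Tambara ideal, so $K_1\cap K_2\in\RadFg_G(T)$. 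The meet in $\RadId_G(T)$ is the levelwise intersection by \Cref{prop:radid-frame}, so this is the right meet.

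The only potential subtlety is definitional. The class $\RadFg_G(T)$ should mean radicals of finitely generated ideals, $\sqrt{\langle x_1,\dots,x_n\rangle}$; this is the reading already used in \Cref{lem:radfg-compact}. It is not literally ``radical and finitely generated'': $\sqrt{IJ}$ need not itself be finitely generated, only the radical of a finitely generated ideal. Under this reading the argument above is complete, and there is no genuine obstacle beyond \Cref{prop:radical-product-intersection-general}, which is already established.
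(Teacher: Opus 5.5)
Your proof is correct and matches the paper's argument: write the two elements as $\sqrt{\langle x_1,\dots,x_m\rangle}$ and $\sqrt{\langle y_1,\dots,y_n\rangle}$, use \Cref{prop:radical-product-intersection-general} to identify their intersection with the radical of the product, and invoke \Cref{cor:fg-product} for finite generation. Two of your additions are consistent with the paper. Your treatment of the empty meet $\top=\langle 1\rangle_G$ is handled separately in the paper, in \Cref{thm:coherent}. Your reading of $\RadFg_G(T)$ as radicals of finitely generated ideals is the one the paper's proof implicitly uses, and it is also the reading in \Cref{lem:radfg-compact}.
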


\begin{proof}
Let $I,J\in\RadFg_G(T)$, say $I=\sqrt{\langle x_1,\dots,x_m\rangle}$ and
$J=\sqrt{\langle y_1,\dots,y_n\rangle}$. By
\Cref{prop:radical-product-intersection-general},
\[
I\cap J
= \sqrt{\langle x_1,\dots,x_m\rangle}\cap\sqrt{\langle y_1,\dots,y_n\rangle}
= \sqrt{\langle x_1,\dots,x_m\rangle\cdot\langle y_1,\dots,y_n\rangle}.
\]
By \Cref{cor:fg-product},
$\langle x_1,\dots,x_m\rangle\cdot\langle y_1,\dots,y_n\rangle$ is finitely
generated, so $I\cap J$ is a radical finitely generated Tambara ideal.
\end{proof}

\begin{Thm}\label{thm:coherent}
The frame $\RadId_G(T)$ is coherent, with compact elements the radical finitely
generated Tambara ideals $\RadFg_G(T)$.
\end{Thm}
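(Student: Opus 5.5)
The plan is to assemble coherence from the three preceding lemmas, following the proof of \Cref{thm:ring-coherent} step by step. By \Cref{lem:radfg-generates}, every radical Tambara ideal is a join of elements $\sqrt{\langle x\rangle_H}$, and these lie in $\RadFg_G(T)$. By \Cref{lem:radfg-compact}, every element of $\RadFg_G(T)$ is compact. By \Cref{lem:radfg-meets}, $\RadFg_G(T)$ is closed under binary meets. For the empty meet, i.e.\ the top element, I will note that $T=\sqrt{\langle 1\rangle_G}$ with $1\in T(G/G)$. This ideal is radical, finitely generated, and therefore compact by \Cref{lem:radfg-compact}. These facts together give the three conditions of \Cref{def:compact-element}, so $\RadId_G(T)$ is coherent.

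It remains to show that every compact element lies in $\RadFg_G(T)$. I will argue exactly as in the ring case. Let $I\in\RadId_G(T)$ be compact. By \Cref{lem:radfg-generates},
\[
I=\bigvee_{H\le G}\bigvee_{x\in I(G/H)}\sqrt{\langle x\rangle_H}.
\]
Compactness then yields finitely many pairs $(H_j,x_j)$ with $x_j\in I(G/H_j)$, $j=1,\dots,n$, such that $I\subseteq\bigvee_{j}\sqrt{\langle x_j\rangle_{H_j}}$. Each $\sqrt{\langle x_j\rangle_{H_j}}$ is contained in $I$, so the reverse inclusion also holds and $I$ equals this finite join.

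The only step requiring any care is identifying this finite join with a single radical finitely generated ideal. I need
\[
\sqrt{\textstyle\sum_j \sqrt{\langle x_j\rangle}}=\sqrt{\langle x_1,\dots,x_n\rangle}.
\]
The inclusion $\supseteq$ is clear, since $\langle x_1,\dots,x_n\rangle=\sum_j\langle x_j\rangle$ by \Cref{rem:generated-ideal}. For $\subseteq$, each $\sqrt{\langle x_j\rangle}$ is contained in $\sqrt{\langle x_1,\dots,x_n\rangle}$ by monotonicity of the radical. The sum is therefore contained in this radical ideal, and taking radicals again gives the inclusion by idempotence (\Cref{prop:levelwise-radical}). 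Hence $I=\sqrt{\langle x_1,\dots,x_n\rangle}\in\RadFg_G(T)$, which completes the identification of the compact elements.
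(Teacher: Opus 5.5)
Your proposal is correct and follows the paper's proof essentially verbatim: coherence comes from \Cref{lem:radfg-generates}, \Cref{lem:radfg-compact} and \Cref{lem:radfg-meets} together with $T=\sqrt{\langle 1\rangle_G}$, and the identification of compact elements comes from applying compactness to the join in \Cref{lem:radfg-generates} and showing the resulting finite join equals $\sqrt{\langle x_1,\dots,x_n\rangle}$. One small point of ordering, shared with the paper: the coherence condition that \emph{all} compact elements are closed under finite meets is only established once your second paragraph shows that every compact element lies in $\RadFg_G(T)$.
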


\begin{proof}
By \Cref{lem:radfg-generates}, $\RadFg_G(T)$ generates $\RadId_G(T)$ under
joins. By \Cref{lem:radfg-compact}, these are compact. By
\Cref{lem:radfg-meets}, they are closed under finite meets. The top element is
$T=\sqrt{\langle 1\rangle_G}$, which is compact by \Cref{lem:radfg-compact}.
Hence $\RadId_G(T)$ is coherent.

Conversely, let $I\in\RadId_G(T)$ be compact. By \Cref{lem:radfg-generates},
\[
I=\bigvee_{H\le G}\bigvee_{x\in I(G/H)} \sqrt{\langle x\rangle_H}.
\]
By compactness, there exist subgroups $H_1,\dots,H_n\le G$ and elements
$x_i\in I(G/H_i)$ such that
\[
I\subseteq \bigvee_{i=1}^n \sqrt{\langle x_i\rangle_{H_i}}.
\]
Since each $\sqrt{\langle x_i\rangle_{H_i}}\subseteq I$, the reverse inclusion
is automatic, so
\[
I=\bigvee_{i=1}^n \sqrt{\langle x_i\rangle_{H_i}}.
\]
Because each $x_i\in I(G/H_i)$, we have
$\langle x_1,\dots,x_n\rangle\subseteq I$, and therefore
\[
\sqrt{\langle x_1,\dots,x_n\rangle}\subseteq I.
\]
On the other hand, each
$\sqrt{\langle x_i\rangle_{H_i}}\subseteq\sqrt{\langle x_1,\dots,x_n\rangle}$,
so
\[
I=\bigvee_{i=1}^n \sqrt{\langle x_i\rangle_{H_i}}
\subseteq \sqrt{\langle x_1,\dots,x_n\rangle}.
\]
Hence
\[
I=\sqrt{\langle x_1,\dots,x_n\rangle}\in\RadFg_G(T),
\]
and the compact elements of $\RadId_G(T)$ are exactly the radical finitely
generated Tambara ideals.
\end{proof}

\begin{Thm}\label{thm:spectrality}
The Nakaoka spectrum $\Spec_{\Nak}(T)$ is a spectral space.
\end{Thm}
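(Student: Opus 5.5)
The plan is to combine three earlier results, exactly as in the proof of \Cref{cor:ring-spectral}. By \Cref{thm:coherent}, the frame $\RadId_G(T)$ is coherent. By \Cref{prop:coherent-spectral}, a coherent frame is spatial and its point space is spectral in the sense of Hochster. Hence $\Spec_{\Frm}(T)=\pt(\RadId_G(T))$ is a spectral space.

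Next, I transport this along \Cref{thm:points-primes}, which gives a homeomorphism $\Spec_{\Frm}(T)\cong\Spec_{\Nak}(T)$. The Nakaoka topology on the right is the one of \Cref{def:nakaoka-topology}. Spectrality is a purely topological property: quasi-compactness, sobriety, and having a basis of quasi-compact opens closed under finite intersections are all preserved by homeomorphisms. It therefore follows that $\Spec_{\Nak}(T)$ is spectral.

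There is no real obstacle here; the substantive work has already been done. The delicate point was the identification of the topologies in \Cref{thm:points-primes}, where the Nakaoka subbasic opens $D_H(x)$ match the opens $U(\sqrt{\langle x\rangle_H})$, together with coherence. For completeness, I would also record what the argument gives explicitly. The quasi-compact opens of $\Spec_{\Nak}(T)$ are the sets $\{P\mid I\not\subseteq P\}$ with $I\in\RadFg_G(T)$, since under the dual equivalence compact elements of the coherent frame correspond to compact opens. Quasi-compactness of the whole space comes from compactness of the top element $T=\sqrt{\langle 1\rangle_G}$.
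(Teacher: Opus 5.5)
Your proposal is correct and follows the paper's proof exactly: coherence of $\RadId_G(T)$ (\Cref{thm:coherent}) plus \Cref{prop:coherent-spectral} makes $\pt(\RadId_G(T))$ spectral, and the homeomorphism of \Cref{thm:points-primes} transfers this to $\Spec_{\Nak}(T)$. Your closing description of the quasi-compact opens is the content of \Cref{rem:compact-opens}.
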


\begin{proof}
By \Cref{thm:coherent}, $\RadId_G(T)$ is coherent and by
\Cref{prop:coherent-spectral}, its point space is spectral. But by
\Cref{thm:points-primes}, $\pt(\RadId_G(T))\cong\Spec_{\Nak}(T)$, so the latter is spectral, as claimed. 
\end{proof}

\begin{Rem}\label{rem:compact-opens}
The compact open subsets of $\Spec_{\Nak}(T)$ correspond to the compact elements
of $\RadId_G(T)$, namely the radical finitely generated Tambara ideals. The
compact opens are $D_I=\{P\mid I\not\subseteq P\}$ for $I\in\RadFg_G(T)$.
\end{Rem}
\begin{Rem}
We now turn to functoriality. Nakaoka \cite[Theorem~4.9]{Nakaoka2012Ideals}
shows that for a morphism $\varphi\colon T\to S$ of $G$-Tambara functors and a
prime $Q\in\Spec_{\Nak}(S)$, the levelwise preimage $\varphi^{-1}(Q)$ is a prime
Tambara ideal of $T$, and that the resulting map
$\varphi^a\colon\Spec_{\Nak}(S)\to\Spec_{\Nak}(T)$,
$Q\mapsto\varphi^{-1}(Q)$, is continuous. We will reprove this from the frame theoretic perspective, and in fact show that $\varphi^a$ is a morphism of spectral spaces.
\end{Rem}
\begin{Def}\label{def:extension-radical}
Let $\varphi\colon T\to S$ be a morphism of $G$-Tambara functors. For a Tambara
ideal $I\trianglelefteq T$, let $\varphi_\ast(I)\trianglelefteq S$ denote the
Tambara ideal of $S$ generated by the subsets
$\varphi_H(I(G/H))\subseteq S(G/H)$ for $H\le G$. If $I$ is radical, define
$\widetilde{\varphi}(I)\coloneqq \sqrt{\varphi_\ast(I)}\in \RadId_G(S)$.
\end{Def}

\begin{Lem}\label{lem:preimage-radical}
Let $\varphi\colon T\to S$ be a morphism of $G$-Tambara functors and let
$J\trianglelefteq S$ be a radical Tambara ideal. Then the levelwise preimage
$\varphi^{-1}(J)$, defined by
$\varphi^{-1}(J)(G/H)\coloneqq \varphi_H^{-1}(J(G/H))$, is a radical Tambara
ideal of $T$.
\end{Lem}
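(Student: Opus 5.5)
The plan is to verify the ideal axioms (I1)--(I5) levelwise, using that $\varphi$ commutes with all the structure maps, and then to obtain radicality from the levelwise description of radicals in \Cref{prop:levelwise-radical}.

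First, (I1) holds because each $\varphi_H\colon T(G/H)\to S(G/H)$ is a ring homomorphism, and the preimage of a ring ideal under a ring homomorphism is a ring ideal. For (I2)--(I5), I would use that a morphism of Tambara functors commutes with restrictions, transfers, norms and conjugations. For example, if $x\in\varphi_K^{-1}(J(G/K))$, then
\[
\varphi_H\bigl(N^H_K(x)\bigr)=N^H_K\bigl(\varphi_K(x)\bigr)\in N^H_K\bigl(J(G/K)\bigr)\subseteq J(G/H),
\]
so $N^H_K(x)\in\varphi^{-1}(J)(G/H)$. Restrictions and transfers are handled identically, the transfer case using only additivity and naturality. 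For (I5), naturality gives $c_g(\varphi^{-1}(J)(G/H))\subseteq\varphi^{-1}(J)(G/gHg^{-1})$. Applying the same inclusion with $g^{-1}$, together with $c_{g^{-1}}\circ c_g=\mathrm{id}$ from (T1), yields equality. Hence $\varphi^{-1}(J)$ is a Tambara ideal.

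For radicality, I would avoid the definition via $\langle x\rangle^n$ and instead use \Cref{prop:levelwise-radical}: $(\sqrt{\varphi^{-1}(J)})(G/H)$ is the ordinary ring radical of $\varphi_H^{-1}(J(G/H))$. So suppose $x^n\in\varphi_H^{-1}(J(G/H))$. Then $\varphi_H(x)^n\in J(G/H)$. Since $J$ is radical, \Cref{prop:levelwise-radical} says $J(G/H)$ is a radical ring ideal, so $\varphi_H(x)\in J(G/H)$, i.e.\ $x\in\varphi^{-1}(J)(G/H)$. Thus $\sqrt{\varphi^{-1}(J)}\subseteq\varphi^{-1}(J)$; the reverse inclusion is automatic.

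The only potential subtlety is this radicality step, where one must know that radicality of a Tambara ideal can be checked levelwise with ordinary ring radicals. That is exactly the Chan--Spitz input, \Cref{prop:levelwise-radical}. Everything else is routine naturality.
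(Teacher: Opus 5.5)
Your proof is correct and follows the same route as the paper: closure under restriction, transfer, norm and conjugation comes from naturality of $\varphi$, and radicality is checked levelwise using that preimages of radical ring ideals under ring homomorphisms are radical. You also make explicit two points the paper leaves implicit: the equality required in (I5), obtained via $c_{g^{-1}}$, and the appeal to \Cref{prop:levelwise-radical} that reduces Tambara radicality to ordinary ring radicality at each level.
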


\begin{proof}
Closure under the Tambara structure maps follows from the fact that $\varphi$
commutes with restriction, transfer, norm, and conjugation. Radicality holds
levelwise. Indeed, $\varphi_H^{-1}(J(G/H))$ is a radical ideal of $T(G/H)$ since
$J(G/H)$ is a radical ideal of $S(G/H)$ and preimages of radical ideals under
ring homomorphisms are radical.
\end{proof}

\begin{Lem}\label{lem:extension-radical-basic}
Let $\varphi\colon T\to S$ be a morphism of $G$-Tambara functors and let
$I\trianglelefteq T$ be a Tambara ideal. Then
$\sqrt{\varphi_\ast(\sqrt{I})}=\sqrt{\varphi_\ast(I)}$.
\end{Lem}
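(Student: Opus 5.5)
The plan is to prove the two inclusions separately. The inclusion $\sqrt{\varphi_\ast(I)}\subseteq\sqrt{\varphi_\ast(\sqrt{I})}$ is immediate. Indeed, $I\subseteq\sqrt{I}$ by \Cref{prop:levelwise-radical}, so levelwise $\varphi_H(I(G/H))\subseteq\varphi_H(\sqrt{I}(G/H))$. Hence $\varphi_\ast(I)\subseteq\varphi_\ast(\sqrt{I})$, because $\varphi_\ast$ is monotone: it is defined as the ideal generated by the images. Taking radicals, which is monotone, gives the claim.

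For the reverse inclusion it suffices to show $\varphi_\ast(\sqrt{I})\subseteq\sqrt{\varphi_\ast(I)}$. Then applying $\sqrt{(-)}$ and idempotence (the right side is radical by \Cref{prop:levelwise-radical}) finishes. Since $\sqrt{\varphi_\ast(I)}$ is a Tambara ideal of $S$ and $\varphi_\ast(\sqrt{I})$ is the smallest Tambara ideal containing the sets $\varphi_H(\sqrt{I}(G/H))$, it is enough to check generators. That is, for each $H\le G$ and $x\in(\sqrt{I})(G/H)$, we must show $\varphi_H(x)\in(\sqrt{\varphi_\ast(I)})(G/H)$.

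This generator check is the only real step. Here I would use the levelwise description of \Cref{prop:levelwise-radical}, rather than the definition via powers of $\langle x\rangle$, to avoid handling how $\varphi$ interacts with Tambara products. By that proposition, $(\sqrt{I})(G/H)$ is the ordinary ring radical of $I(G/H)$, so $x^n\in I(G/H)$ for some $n\ge 1$. Since $\varphi_H$ is a ring homomorphism, $\varphi_H(x)^n=\varphi_H(x^n)\in\varphi_H(I(G/H))\subseteq\varphi_\ast(I)(G/H)$. Applying \Cref{prop:levelwise-radical} again, now to the Tambara ideal $\varphi_\ast(I)$ of $S$, the level $(\sqrt{\varphi_\ast(I)})(G/H)$ is the ring radical of $\varphi_\ast(I)(G/H)$, so $\varphi_H(x)$ lies in it.

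The main (mild) obstacle is conceptual rather than computational. With Nakaoka's original definition of the radical, one would need to relate $\langle\varphi_H(x)\rangle^n$ to $\varphi_\ast(\langle x\rangle^n)$, which requires knowing that $\varphi_\ast$ sends generators and generalized products to generators and generalized products. The Chan--Spitz levelwise result sidesteps this entirely, reducing the lemma to the classical ring-theoretic fact applied level by level.
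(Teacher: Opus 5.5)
Your proposal is correct and follows essentially the same argument as the paper. The easy inclusion comes from monotonicity. For the other, you check generators using the Chan--Spitz levelwise description of the radical: $x^n\in I(G/H)$ gives $\varphi_H(x)^n\in\varphi_\ast(I)(G/H)$, and you then take radicals.
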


\begin{proof}
Since $I\subseteq\sqrt{I}$, we have
$\varphi_\ast(I)\subseteq\varphi_\ast(\sqrt{I})$, hence
$\sqrt{\varphi_\ast(I)}\subseteq\sqrt{\varphi_\ast(\sqrt{I})}$. For the
reverse, let $x\in(\sqrt{I})(G/H)$. By \Cref{prop:levelwise-radical},
$x^n\in I(G/H)$ for some $n\ge 1$, so
$\varphi_H(x)^n=\varphi_H(x^n)\in\varphi_H(I(G/H))\subseteq
\varphi_\ast(I)(G/H)$. Again by \Cref{prop:levelwise-radical},
$\varphi_H(x)\in\sqrt{\varphi_\ast(I)}(G/H)$. Thus every generator of
$\varphi_\ast(\sqrt{I})$ lies in $\sqrt{\varphi_\ast(I)}$, so
$\varphi_\ast(\sqrt{I})\subseteq\sqrt{\varphi_\ast(I)}$. Taking radicals gives
the reverse inclusion.
\end{proof}

\begin{Prop}\label{prop:extension-frame-map}
Let $\varphi\colon T\to S$ be a morphism of $G$-Tambara functors. Then
\[
\widetilde{\varphi}\colon \RadId_G(T)\longrightarrow \RadId_G(S),
\qquad
I\longmapsto \sqrt{\varphi_\ast(I)},
\]
is a frame homomorphism.
\end{Prop}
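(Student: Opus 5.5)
The most efficient route is through the adjunction with the preimage map. By \Cref{lem:preimage-radical}, $\varphi^{-1}\colon \RadId_G(S)\to\RadId_G(T)$ is a well-defined, order-preserving map. We first show that $\widetilde{\varphi}$ is left adjoint to it: for $I\in\RadId_G(T)$ and $J\in\RadId_G(S)$,
\[
\sqrt{\varphi_\ast(I)}\subseteq J \iff \varphi_\ast(I)\subseteq J \iff \varphi_H(I(G/H))\subseteq J(G/H)\ \forall H \iff I\subseteq \varphi^{-1}(J).
\]
The first equivalence holds because $J$ is radical and $\sqrt{-}$ is monotone and idempotent. The second holds because $\varphi_\ast(I)$ is the smallest Tambara ideal containing the images. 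Being a left adjoint between complete lattices, $\widetilde{\varphi}$ preserves all joins computed in $\RadId_G$, in particular $\bot$. This disposes of arbitrary joins without any computation. Alternatively, one can check joins directly using \Cref{lem:extension-radical-basic} together with the fact that $\varphi_\ast$ commutes with sums of ideals.

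Next comes the top element. Since $1\in T(G/G)$ maps to $1\in S(G/G)$, $\varphi_\ast(T)$ contains $1$ at level $G/G$. Closing under restriction, it contains $1$ at every level, so it equals $S$, and hence $\widetilde{\varphi}(T)=S$.

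The main step is binary meets. For $I,J\in\RadId_G(T)$, the inclusion $\widetilde{\varphi}(I\cap J)\subseteq\widetilde{\varphi}(I)\cap\widetilde{\varphi}(J)$ is just monotonicity. For the reverse inclusion I will pass through products. By \Cref{prop:radical-product-intersection-general}, $I\cap J=\sqrt{IJ}$, so by \Cref{lem:extension-radical-basic}, $\widetilde{\varphi}(I\cap J)=\sqrt{\varphi_\ast(IJ)}$. Likewise, $\widetilde{\varphi}(I)\cap\widetilde{\varphi}(J)=\sqrt{\varphi_\ast(I)}\cap\sqrt{\varphi_\ast(J)}=\sqrt{\varphi_\ast(I)\varphi_\ast(J)}$. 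It therefore suffices to show $\varphi_\ast(I)\varphi_\ast(J)\subseteq\sqrt{\varphi_\ast(IJ)}$, and I expect this to be the main obstacle, since products are not levelwise.

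My first attempt is to reduce to generators. Write $\varphi_\ast(I)=\sum\langle\varphi_H(a)\rangle$ over $a\in I(G/H)$, and similarly for $J$. Products distribute over sums, as in \Cref{cor:fg-product}, so it is enough to show $\langle\varphi(a)\rangle\langle\varphi(b)\rangle\subseteq\varphi_\ast(IJ)$ for $a\in I(G/H_1)$ and $b\in J(G/H_2)$. By \Cref{cor:product-generators}, this product is generated by the generalized products of $\varphi(a)$ and $\varphi(b)$. Because $\varphi$ commutes with restriction, conjugation and norm, each such generalized product equals $\varphi_L(\mu\nu)$, where $\mu\nu$ is the corresponding generalized product of $a$ and $b$. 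That element $\mu\nu$ lies in $\langle a\rangle\langle b\rangle\subseteq IJ$, so its image lies in $\varphi_\ast(IJ)$. This gives $\varphi_\ast(I)\varphi_\ast(J)\subseteq\varphi_\ast(IJ)$ without even needing the radical, and meets are preserved.
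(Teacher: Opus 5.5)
Your proof is correct. It reaches the result by a partly different route from the paper's. For joins, the paper computes directly: it uses \Cref{lem:extension-radical-basic} to remove the inner radical in $\widetilde{\varphi}\bigl(\sqrt{\sum_\lambda I_\lambda}\bigr)$, then applies the identity $\varphi_\ast(\sum_\lambda I_\lambda)=\sum_\lambda\varphi_\ast(I_\lambda)$. You instead set up the Galois connection $\widetilde{\varphi}\dashv\varphi^{-1}$ between $\RadId_G(T)$ and $\RadId_G(S)$ via \Cref{lem:preimage-radical}, so preservation of all joins, including $\bot$, is automatic. This is the same chain of equivalences the paper only uses later, in \Cref{prop:points-are-preimages}, and your packaging makes that proposition immediate. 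There, the meet-prime attached to the pulled-back point is $\bigvee\{I\mid\widetilde{\varphi}(I)\subseteq Q\}$, which is just the right adjoint evaluated at $Q$, namely $\varphi^{-1}(Q)$. For binary meets, the core computation is the paper's: generalized products of $\varphi_{H_1}(a)$ and $\varphi_{H_2}(b)$ are images of generalized products of $a$ and $b$. You are more economical on one inclusion and less on the other. You note that $\widetilde{\varphi}(I\cap J)\subseteq\widetilde{\varphi}(I)\cap\widetilde{\varphi}(J)$ is mere monotonicity, which makes the paper's squaring argument ($\varphi_H(x)^2\in\varphi_\ast(I)(G/H)\cdot\varphi_\ast(J)(G/H)$) unnecessary. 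For the other inclusion you aim at $\varphi_\ast(IJ)$, which costs an extra appeal to $I\cap J=\sqrt{IJ}$ and to \Cref{lem:extension-radical-basic}. The paper aims at $\varphi_\ast(I\cap J)$ directly, using only that $\mu\in I(G/L)$ and $\nu\in J(G/L)$ force $\mu\nu\in(I\cap J)(G/L)$. In exchange, your version yields the slightly more informative fact $\varphi_\ast(I)\,\varphi_\ast(J)\subseteq\varphi_\ast(IJ)$, i.e.\ extension of ideals is compatible with Tambara products.
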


\begin{proof}
We verify that $\widetilde{\varphi}$ preserves the top element, arbitrary joins, and finite meets in turn.

\smallskip\noindent
\emph{Top element.}
$\widetilde{\varphi}(T)=\sqrt{\varphi_\ast(T)}=\sqrt{S}=S$, since
$\varphi_H(T(G/H))$ contains $\varphi_H(1)=1$ for each $H$.

\smallskip\noindent
\emph{Joins.}
Let $\{I_\lambda\}\subseteq\RadId_G(T)$. By \Cref{prop:radid-frame},
$\bigvee_\lambda I_\lambda=\sqrt{\sum_\lambda I_\lambda}$. Using
\Cref{lem:extension-radical-basic},
$\widetilde{\varphi}(\bigvee_\lambda I_\lambda)
=\sqrt{\varphi_\ast(\sqrt{\sum_\lambda I_\lambda})}
=\sqrt{\varphi_\ast(\sum_\lambda I_\lambda)}$.
Since $\varphi_\ast(\sum_\lambda I_\lambda)$ is the Tambara ideal generated by
the union of the subsets $\varphi_H(I_\lambda(G/H))$, we have
$\varphi_\ast(\sum_\lambda I_\lambda)=\sum_\lambda\varphi_\ast(I_\lambda)$, so
$\widetilde{\varphi}(\bigvee_\lambda I_\lambda)
=\sqrt{\sum_\lambda\varphi_\ast(I_\lambda)}
=\bigvee_\lambda\widetilde{\varphi}(I_\lambda)$.

\smallskip\noindent
\emph{Finite meets.}
Let $I,J\in\RadId_G(T)$. We show
$\widetilde{\varphi}(I\cap J)=\widetilde{\varphi}(I)\cap\widetilde{\varphi}(J)$.
By \Cref{prop:radical-product-intersection-general} (in $S$), the right-hand side
equals $\sqrt{\varphi_\ast(I)\cdot\varphi_\ast(J)}$. We claim
\begin{equation}\label{eq:ext-meet}
\sqrt{\varphi_\ast(I\cap J)}=\sqrt{\varphi_\ast(I)\cdot\varphi_\ast(J)}.
\end{equation}

For $\supseteq$ write
\[
\varphi_\ast(I)=\sum_{H\le G}\sum_{x\in I(G/H)}\langle \varphi_H(x)\rangle,
\qquad
\varphi_\ast(J)=\sum_{K\le G}\sum_{y\in J(G/K)}\langle \varphi_K(y)\rangle.
\]
Since sums of Tambara ideals are computed levelwise and Tambara products are
defined as the Tambara ideals generated by the levelwise ring products, we have
\[
\varphi_\ast(I)\cdot\varphi_\ast(J)
=
\sum_{H,K}\sum_{x,y}
\langle\varphi_H(x)\rangle\langle\varphi_K(y)\rangle.
\]
By \Cref{cor:product-generators}, each
$\langle\varphi_H(x)\rangle\langle\varphi_K(y)\rangle$ is generated by the
generalized products of $\varphi_H(x)$ and $\varphi_K(y)$. Since $\varphi$
commutes with all structure maps, every such generalized product is
$\varphi_L(\mu\cdot\nu)$ where $\mu\cdot\nu$ is a generalized product of $x$
and $y$ in $T$. Since $\mu\in I(G/L)$ and $\nu\in J(G/L)$, we have
$\mu\cdot\nu\in(I\cap J)(G/L)$, so
$\varphi_L(\mu\cdot\nu)\in\varphi_\ast(I\cap J)(G/L)$. Thus every generator of
$\varphi_\ast(I)\cdot\varphi_\ast(J)$ lies in $\varphi_\ast(I\cap J)$, giving
$\varphi_\ast(I)\cdot\varphi_\ast(J)\subseteq\varphi_\ast(I\cap J)$.

For $\subseteq$ let $x\in(I\cap J)(G/H)$. Then $x\in I(G/H)\cap J(G/H)$,
so $\varphi_H(x)\in\varphi_H(I(G/H))\cap\varphi_H(J(G/H))\subseteq
\varphi_\ast(I)(G/H)\cap\varphi_\ast(J)(G/H)$. In particular,
$\varphi_H(x)^2=\varphi_H(x)\cdot\varphi_H(x)\in
\varphi_\ast(I)(G/H)\cdot\varphi_\ast(J)(G/H)\subseteq
(\varphi_\ast(I)\cdot\varphi_\ast(J))(G/H)$, where the last inclusion is
\Cref{def:tambara-product}. By \Cref{prop:levelwise-radical},
$\varphi_H(x)\in\sqrt{\varphi_\ast(I)\cdot\varphi_\ast(J)}(G/H)$. Since the
latter is a Tambara ideal containing every generator of
$\varphi_\ast(I\cap J)$, we obtain
$\varphi_\ast(I\cap J)\subseteq\sqrt{\varphi_\ast(I)\cdot\varphi_\ast(J)}$,
and taking radicals gives $\subseteq$ in \eqref{eq:ext-meet}.
\end{proof}

\begin{Cor}\label{cor:functoriality-spectrum}
A morphism $\varphi\colon T\to S$ of $G$-Tambara functors induces a spectral
map $\varphi^a\colon\Spec_{\Nak}(S)\to \Spec_{\Nak}(T)$.
\end{Cor}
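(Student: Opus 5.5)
The plan is to get $\varphi^a$ by applying $\pt$ to the frame homomorphism $\widetilde{\varphi}\colon \RadId_G(T)\to\RadId_G(S)$ of \Cref{prop:extension-frame-map}. That gives a continuous map $\pt(\RadId_G(S))\to\pt(\RadId_G(T))$, $p\mapsto p\circ\widetilde{\varphi}$. Transporting it along the homeomorphisms of \Cref{thm:points-primes} produces a continuous map $\Spec_{\Nak}(S)\to\Spec_{\Nak}(T)$. What remains is to identify this map with $Q\mapsto\varphi^{-1}(Q)$ and to check that it is spectral.

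\emph{Identification.} Let $Q$ be a Nakaoka prime of $S$, with associated point $p_Q$, so $p_Q(J)=0$ exactly when $J\subseteq Q$. The composite point $p_Q\circ\widetilde{\varphi}$ corresponds, by \Cref{rem:points-meet-prime}, to the meet-prime
\[
\bigvee\{I\in\RadId_G(T)\mid \sqrt{\varphi_\ast(I)}\subseteq Q\}.
\]
Since $Q$ is radical, $\sqrt{\varphi_\ast(I)}\subseteq Q$ holds if and only if $\varphi_\ast(I)\subseteq Q$. This in turn holds if and only if $\varphi_H(I(G/H))\subseteq Q(G/H)$ for all $H$, that is, if and only if $I\subseteq\varphi^{-1}(Q)$. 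By \Cref{lem:preimage-radical}, $\varphi^{-1}(Q)$ is itself a radical Tambara ideal, so the join above is exactly $\varphi^{-1}(Q)$. This is the Galois-connection step $\widetilde{\varphi}\dashv\varphi^{-1}$. As a by-product, $\varphi^{-1}(Q)$ is meet-prime, hence Nakaoka-prime by \Cref{thm:points-primes}, which reproves Nakaoka's result.

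\emph{Spectrality.} By \Cref{prop:coherent-spectral}, a continuous map between spectral spaces is spectral exactly when the corresponding frame map between coherent frames sends compact elements to compact elements. By \Cref{thm:coherent}, a compact element of $\RadId_G(T)$ has the form $K=\sqrt{\langle x_1,\dots,x_n\rangle}$. By \Cref{lem:extension-radical-basic},
\[
\widetilde{\varphi}(K)=\sqrt{\varphi_\ast(\langle x_1,\dots,x_n\rangle)}.
\]
I claim that $\varphi_\ast(\langle x_1,\dots,x_n\rangle)=\langle\varphi(x_1),\dots,\varphi(x_n)\rangle$. For $\supseteq$, the right side is generated by elements of the left. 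For $\subseteq$, note that $\varphi^{-1}(\langle\varphi(x_i)\rangle_i)$ is a Tambara ideal containing every $x_i$, hence contains $\langle x_1,\dots,x_n\rangle$. The same inclusion can also be checked directly, since $\varphi$ commutes with all structure maps. Therefore $\widetilde{\varphi}(K)$ lies in $\RadFg_G(S)$ and is compact. The preimage of a compact open $U(K)$ is then $U(\widetilde{\varphi}(K))$, which is quasi-compact, so $\varphi^a$ is spectral.

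The main obstacle I expect is the bookkeeping in the identification step: making sure the inverse-image map from $\pt$ really matches the levelwise preimage $\varphi^{-1}(Q)$. The argument above handles it via the adjunction $\varphi_\ast(I)\subseteq Q\iff I\subseteq\varphi^{-1}(Q)$, and I expect everything else to follow directly.
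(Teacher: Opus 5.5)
Your proposal is correct and takes essentially the same route as the paper. You apply $\pt$ to the frame homomorphism $\widetilde{\varphi}$ of \Cref{prop:extension-frame-map}, then get spectrality by showing that $\widetilde{\varphi}\bigl(\sqrt{\langle x_1,\dots,x_n\rangle}\bigr)=\sqrt{\langle\varphi_{H_1}(x_1),\dots,\varphi_{H_n}(x_n)\rangle}$ is compact. Your identification of $\varphi^a(Q)$ with $\varphi^{-1}(Q)$ via $\varphi_\ast(I)\subseteq Q\iff I\subseteq\varphi^{-1}(Q)$ is the argument the paper gives separately in \Cref{prop:points-are-preimages}. Your preimage argument for $\varphi_\ast(\langle x_1,\dots,x_n\rangle)\subseteq\langle\varphi(x_1),\dots,\varphi(x_n)\rangle$ just makes explicit the paper's appeal to $\varphi$ commuting with the structure maps.
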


\begin{proof}
By \Cref{prop:extension-frame-map},
$\widetilde{\varphi}\colon\RadId_G(T)\to\RadId_G(S)$ is a frame homomorphism.
Applying the point functor and using \Cref{thm:points-primes} gives a continuous
map. To see that it is spectral, it suffices to show that
$\widetilde{\varphi}$ sends compact elements to compact elements. Let
$I\in\RadId_G(T)$ be compact. By \Cref{thm:coherent},
$I=\sqrt{\langle x_1,\dots,x_n\rangle}$ for some finite family
$x_i\in T(G/H_i)$. By
\Cref{lem:extension-radical-basic},
$\widetilde{\varphi}(I)=\sqrt{\varphi_\ast(\langle x_1,\dots,x_n\rangle)}$.
Since $\varphi$ commutes with all structure maps,
$\varphi_\ast(\langle x_1,\dots,x_n\rangle)
=\langle\varphi_{H_1}(x_1),\dots,\varphi_{H_n}(x_n)\rangle$,
so $\widetilde{\varphi}(I)
=\sqrt{\langle\varphi_{H_1}(x_1),\dots,\varphi_{H_n}(x_n)\rangle}
\in\RadFg_G(S)$, hence is compact by \Cref{thm:coherent}.
\end{proof}

\begin{Prop}\label{prop:points-are-preimages}
Under the identification of \Cref{thm:points-primes}, the map $\varphi^a$ of
\Cref{cor:functoriality-spectrum} sends $Q\in\Spec_{\Nak}(S)$ to its levelwise
preimage $\varphi^{-1}(Q)\in\Spec_{\Nak}(T)$.
\end{Prop}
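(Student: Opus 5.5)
The plan is to trace a prime $Q\in\Spec_{\Nak}(S)$ through the chain of identifications: first to a point of $\RadId_G(S)$, then through $\widetilde{\varphi}$, and finally back to a meet-prime element of $\RadId_G(T)$.

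\emph{Step 1: the point attached to $Q$.} By \Cref{rem:points-meet-prime} and the proof of \Cref{thm:points-primes}, $Q$ corresponds to the point $p_Q\colon \RadId_G(S)\to 2$ given by $p_Q(J)=0$ if and only if $J\subseteq Q$. Applying $\pt$ to $\widetilde{\varphi}$ sends $p_Q$ to the composite $p_Q\circ\widetilde{\varphi}$.

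\emph{Step 2: the meet-prime attached to the image point.} Again by \Cref{rem:points-meet-prime}, the point $p_Q\circ\widetilde{\varphi}$ corresponds to the meet-prime element
\[
P\coloneqq\bigvee\{I\in\RadId_G(T)\mid \widetilde{\varphi}(I)\subseteq Q\}.
\]
It therefore suffices to show $P=\varphi^{-1}(Q)$. By \Cref{lem:preimage-radical}, $\varphi^{-1}(Q)$ is a radical Tambara ideal, so it is an element of $\RadId_G(T)$.

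\emph{Step 3: a Galois-connection equivalence.} The core of the argument is that, for every $I\in\RadId_G(T)$,
\[
\widetilde{\varphi}(I)\subseteq Q \iff I\subseteq\varphi^{-1}(Q).
\]
\begin{itemize}
\item[($\Rightarrow$)] For $x\in I(G/H)$ we have $\varphi_H(x)\in\varphi_\ast(I)(G/H)\subseteq\sqrt{\varphi_\ast(I)}(G/H)\subseteq Q(G/H)$, so $x\in\varphi^{-1}(Q)(G/H)$.
\item[($\Leftarrow$)] If $I\subseteq\varphi^{-1}(Q)$, then every generator $\varphi_H(x)$ of $\varphi_\ast(I)$ lies in $Q$. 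Since $Q$ is a Tambara ideal, $\varphi_\ast(I)\subseteq Q$, and since $Q$ is radical (\Cref{rem:levels-not-prime}, or because meet-primes of $\RadId_G(S)$ are radical by construction), $\sqrt{\varphi_\ast(I)}\subseteq\sqrt{Q}=Q$.
\end{itemize}

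\emph{Step 4: conclusion.} Taking $I=\varphi^{-1}(Q)$ in Step 3 shows that $\varphi^{-1}(Q)$ belongs to the set whose join defines $P$, so $\varphi^{-1}(Q)\subseteq P$. Conversely, by Step 3 every member of that set is contained in $\varphi^{-1}(Q)$, so their join satisfies $P\subseteq\varphi^{-1}(Q)$. Hence $P=\varphi^{-1}(Q)$. As a by-product, this recovers Nakaoka's result that $\varphi^{-1}(Q)$ is prime: it equals a meet-prime element of $\RadId_G(T)$, and by \Cref{thm:points-primes} such elements are exactly the Nakaoka primes.

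There is no serious obstacle; the work is bookkeeping. The only care needed is to use the explicit form of the correspondence in \Cref{rem:points-meet-prime} in both directions consistently, and to note that $\varphi^{-1}(Q)$ is radical before treating it as an element of $\RadId_G(T)$.
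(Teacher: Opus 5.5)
Your proposal is correct and follows the paper's proof essentially step for step. You identify the image point with the meet-prime $\bigvee\{I\in\RadId_G(T)\mid \widetilde{\varphi}(I)\subseteq Q\}$, prove the equivalence $\widetilde{\varphi}(I)\subseteq Q \iff I\subseteq\varphi^{-1}(Q)$ using that $Q$ is radical, and conclude using that $\varphi^{-1}(Q)$ is radical by \Cref{lem:preimage-radical}.
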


\begin{proof}
By \Cref{rem:points-meet-prime}, the point of $\RadId_G(T)$ corresponding to
$\varphi^a(Q)$ is the meet-prime element
$\bigvee\{I\in\RadId_G(T)\mid\widetilde{\varphi}(I)\subseteq Q\}$. Since $Q$ is
radical,
\[
\widetilde{\varphi}(I)\subseteq Q
\iff
\sqrt{\varphi_\ast(I)}\subseteq Q
\iff
\varphi_\ast(I)\subseteq Q
\iff
I\subseteq\varphi^{-1}(Q).
\]
The middle equivalence uses that $Q$ is radical, so $\sqrt{\varphi_\ast(I)}\subseteq Q$
iff $\varphi_\ast(I)\subseteq Q$. For the last equivalence $\varphi_\ast(I)$ is
by definition the smallest Tambara ideal of $S$ containing the subsets
$\varphi_H(I(G/H))$ for all $H\le G$, so $\varphi_\ast(I)\subseteq Q$ if and only if
$\varphi_H(I(G/H))\subseteq Q(G/H)$ for every $H$, if and only if $I(G/H)\subseteq
\varphi_H^{-1}(Q(G/H))=\varphi^{-1}(Q)(G/H)$ for every $H$, if and only if
$I\subseteq\varphi^{-1}(Q)$. The join of all radical Tambara ideals contained
in $\varphi^{-1}(Q)$ is $\varphi^{-1}(Q)$ itself, since $\varphi^{-1}(Q)$ is
radical by \Cref{lem:preimage-radical}. Hence $\varphi^a(Q)=\varphi^{-1}(Q)$.
\end{proof}

\section{Consequences of the frame-theoretic perspective}\label{sec:consequences}

In this section we collect several structural results about the Nakaoka spectrum
that follow cleanly from the frame-theoretic description. We begin with the
following, which is \cite[Corollary~4.10]{Nakaoka2012Ideals}.

\begin{Prop}\label{prop:closed-immersion}
Let $I\trianglelefteq T$ be any Tambara ideal. The quotient map
$\pi\colon T\twoheadrightarrow T/I$ induces an isomorphism of frames
\[
\RadId_G(T/I)\cong {\uparrow}\sqrt{I}
\coloneqq \{J\in \RadId_G(T)\mid \sqrt{I}\subseteq J\}.
\]
Consequently, there is a homeomorphism
$\Spec_{\Nak}(T/I)\cong V(I)=V(\sqrt{I})
\coloneqq \{P\in\Spec_{\Nak}(T)\mid I\subseteq P\}$.
\end{Prop}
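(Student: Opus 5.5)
The plan is to establish a correspondence between Tambara ideals of $T/I$ and Tambara ideals of $T$ containing $I$, via levelwise preimage and image under $\pi$, and then check that it restricts to radical ideals. Here $T/I$ is the levelwise quotient $T(G/H)/I(G/H)$ with the induced structure maps, and $\pi_H$ is surjective at every level.

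First, for a Tambara ideal $J'\trianglelefteq T/I$, the levelwise preimage $\pi^{-1}(J')$ is a Tambara ideal containing $I$; this is the argument of \Cref{lem:preimage-radical}. Conversely, for $J\supseteq I$, the levelwise image $\pi_H(J(G/H))=J(G/H)/I(G/H)$ is already a Tambara ideal of $T/I$, because surjectivity lets each structure map on $T/I$ be computed on representatives. For norms, the check is that $N^H_K(x+i)$ with $x\in J(G/K)$ and $i\in I(G/K)$ is congruent to $N^H_K(x)$ modulo $I(G/H)$. I expect this to be the main point needing care. It should follow from $T/I$ being a Tambara functor with $\pi$ a morphism (Nakaoka): we have $\pi(N(x+i))=N(\pi(x+i))=N(\pi(x))=\pi(N(x))$. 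Thus $\pi_\ast(J)$ is simply the levelwise image. The usual ring-theoretic bijection $J\mapsto J/I$ at each level then shows that these constructions are mutually inverse, order-preserving bijections between Tambara ideals of $T/I$ and Tambara ideals of $T$ containing $I$.

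Next I restrict to radical ideals. By \Cref{prop:levelwise-radical}, radicality is a levelwise ring condition, and for rings, $J/I$ is radical in $R/I$ if and only if $J$ is radical in $R$ (when $J\supseteq I$). A radical $J$ contains $I$ if and only if it contains $\sqrt{I}$, so the bijection restricts to an order isomorphism $\RadId_G(T/I)\cong{\uparrow}\sqrt{I}$. Since ${\uparrow}\sqrt{I}$ is closed under arbitrary joins and finite meets in $\RadId_G(T)$, it is a frame, and an order isomorphism of frames is a frame isomorphism.

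Finally, for the homeomorphism I apply $\pt$ and \Cref{thm:points-primes}. Points of ${\uparrow}\sqrt{I}$ are its meet-prime elements. A meet-prime element $P\supseteq\sqrt{I}$ of ${\uparrow}\sqrt{I}$ is meet-prime in $\RadId_G(T)$: given $A\cap B\subseteq P$, replace $A,B$ by $A\vee\sqrt{I}$ and $B\vee\sqrt{I}$, whose meet is $(A\cap B)\vee\sqrt{I}\subseteq P$ by distributivity. The converse is immediate. So the points are exactly the Nakaoka primes containing $I$.

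The topologies match. Basic opens $U(J)$ for $J\in{\uparrow}\sqrt{I}$ correspond to $\{P\in V(I): J\not\subseteq P\}$, and these are the subspace opens $U(J)\cap V(I)$ for $J\in\RadId_G(T)$, since $U(J)\cap V(I)=U(J\vee\sqrt{I})\cap V(I)$. The correspondence sends a prime $P'$ of $T/I$ to $\pi^{-1}(P')$, consistent with \Cref{prop:points-are-preimages}.
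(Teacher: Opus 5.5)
Your proposal is correct and takes essentially the same route as the paper. Both establish the correspondence between Tambara ideals of $T/I$ and Tambara ideals of $T$ containing $I$ (the paper cites Nakaoka's Proposition~2.12 instead of re-deriving it), restrict to radical ideals using the levelwise description of radicals, conclude that an order isomorphism of frames is a frame isomorphism, and then pass to points. Two steps the paper only asserts are made explicit in your version: the distributivity argument that meet-primes of ${\uparrow}\sqrt{I}$ are meet-prime in $\RadId_G(T)$, and the check that the topology on $\pt({\uparrow}\sqrt{I})$ is the subspace topology on $V(I)$.
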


\begin{proof}
By \cite[Proposition~2.12]{Nakaoka2012Ideals}, the Tambara ideals of $T/I$ correspond
bijectively to the Tambara ideals of $T$ containing $I$, via $\bar J\mapsto
\pi^{-1}(\bar J)$. Under this correspondence, radical ideals match. Indeed, at
each level $G/H$, ring ideals $\bar J(G/H)\subseteq T(G/H)/I(G/H)$ correspond
to ring ideals $J(G/H)\supseteq I(G/H)$ in $T(G/H)$, and $\bar J(G/H)$ is
radical in $T(G/H)/I(G/H)$ if and only if $J(G/H)$ is radical in $T(G/H)$; combined
with \Cref{prop:levelwise-radical}, this yields the claim. A radical Tambara
ideal $J$ with $J\supseteq I$ automatically satisfies $J=\sqrt{J}\supseteq
\sqrt{I}$, so the correspondence restricts to a bijection between
$\RadId_G(T/I)$ and ${\uparrow}\sqrt{I}$. The correspondence preserves the
inclusion order, hence preserves meets (which are intersections on both sides)
and joins (which on both sides are the radical of the sum); therefore it is
an isomorphism of frames.

Taking points yields a homeomorphism
$\pt(\RadId_G(T/I))\cong\pt({\uparrow}\sqrt{I})$. By \Cref{thm:points-primes},
the left-hand side identifies with $\Spec_{\Nak}(T/I)$. The points of
${\uparrow}\sqrt{I}$ are precisely the prime Tambara ideals
$P\in\Spec_{\Nak}(T)$ such that $\sqrt{I}\subseteq P$ (equivalently, the
meet-prime elements of $\RadId_G(T)$ that lie in ${\uparrow}\sqrt{I}$), and this
is equivalent to $I\subseteq P$. Thus
$\pt({\uparrow}\sqrt{I})\cong V(I)$, and therefore
$\Spec_{\Nak}(T/I)\cong V(I)$.
\end{proof}

\begin{Def}\label{def:nilradical}
The \emph{nilradical} of a $G$-Tambara functor $T$ is
$\mathfrak{N}(T)\coloneqq \sqrt{0}$, the radical of the zero ideal. By
\Cref{prop:levelwise-radical},
$\mathfrak{N}(T)(G/H)=\mathrm{nil}(T(G/H))$ is the ordinary nilradical of
$T(G/H)$ for each $H\le G$, and
$\mathfrak{N}(T)=\bigcap_{P\in\Spec_{\Nak}(T)}P$.
\end{Def}

\begin{Prop}\label{prop:nilradical-prime-iff-irreducible}
Suppose $T\ne 0$. The nilradical $\mathfrak{N}(T)=\sqrt{0}$ is a prime Tambara
ideal if and only if $\Spec_{\Nak}(T)$ is irreducible.
\end{Prop}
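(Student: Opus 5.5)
The plan is to translate both sides into frame-theoretic statements about $\RadId_G(T)\cong\Omega(\Spec_{\Nak}(T))$ (\Cref{cor:frame-opens}). Recall that a space $X$ is irreducible if and only if it is nonempty and any two nonempty opens meet. Equivalently, $\bot\neq\top$ in $\Omega(X)$, and $U\cap V=\emptyset$ implies $U=\emptyset$ or $V=\emptyset$. In other words, $X$ is irreducible exactly when the bottom element $\bot$ of $\Omega(X)$ is meet-prime in the sense of \Cref{def:meet-prime}.

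Under the isomorphism of \Cref{cor:frame-opens}, the bottom element of $\RadId_G(T)$ is the smallest radical Tambara ideal, namely $\sqrt{0}=\mathfrak{N}(T)$. So $\Spec_{\Nak}(T)$ is irreducible if and only if $\mathfrak{N}(T)$ is a meet-prime element of $\RadId_G(T)$. By the equivalence established in the proof of \Cref{thm:points-primes}, a radical Tambara ideal is meet-prime if and only if it is Nakaoka-prime. Since $\mathfrak{N}(T)$ is radical, the claim follows.

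The remaining details are the properness and nonemptiness conditions. Meet-prime requires $\mathfrak{N}(T)\neq T$, and Nakaoka-prime requires properness, i.e.\ $1\notin\mathfrak{N}(T)(G/G)$. These match the nonemptiness of $\Spec_{\Nak}(T)$. By \Cref{def:nilradical}, $\mathfrak{N}(T)=\bigcap_P P$, so $\mathfrak{N}(T)=T$ exactly when there are no primes, i.e.\ when $\bot=\top$ in $\Omega(\Spec_{\Nak}(T))$.

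The hypothesis $T\neq0$ is not logically required for the equivalence, since both sides fail when $\Spec_{\Nak}(T)=\emptyset$. It is worth noting that $T\neq 0$ gives $1\neq0$ in $T(G/G)$ and hence $1\notin\mathrm{nil}(T(G/G))$. So $\mathfrak{N}(T)$ is proper, and by the intersection formula some prime exists.

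I expect the only delicate point is the bookkeeping showing that the frame isomorphism sends $\mathfrak{N}(T)$ to the empty open set. This holds because $U(I)=\{P\mid I\not\subseteq P\}$ is empty precisely when $I$ is contained in every prime, i.e.\ when $I\subseteq\mathfrak{N}(T)$. The rest is formal.
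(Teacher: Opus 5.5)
Your proposal is correct and follows essentially the same route as the paper: identify $\mathfrak{N}(T)$ with the bottom element of $\RadId_G(T)\cong\Omega(\Spec_{\Nak}(T))$, which is the empty open set; note that irreducibility is exactly meet-primality of $\bot$; and invoke the equivalence between meet-prime and Nakaoka-prime from the proof of \Cref{thm:points-primes}. Your treatment of properness versus nonemptiness is a little more careful than the paper's, which uses $T\neq 0$ only to ensure $\mathfrak{N}(T)\neq T$. In particular, you observe that $T\neq 0$ forces a prime to exist and that both sides fail when $T=0$.
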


\begin{proof}
Since $T\ne 0$, $\mathfrak{N}(T)\ne T$, so the meet-prime condition on
$\mathfrak{N}(T)$ is meaningful. By \Cref{thm:points-primes},
$\mathfrak{N}(T)$ is Nakaoka-prime if and only if it is meet-prime in
$\RadId_G(T)$. Since $\RadId_G(T)$ is spatial
(\Cref{thm:spatiality}) with bottom element $\mathfrak{N}(T)$, it remains to
interpret meet-primality of the bottom element topologically. Under the
identification $\pt(\RadId_G(T))\cong \Spec_{\Nak}(T),$
the bottom element corresponds to the empty open subset. Thus $\bot$ is meet-prime precisely when $U\cap V=\varnothing \Longrightarrow U=\varnothing \text{ or } V=\varnothing$ for opens $U,V\subseteq \Spec_{\Nak}(T)$, which is exactly irreducibility.
\end{proof}

\begin{Def}\label{def:reduced}
A $G$-Tambara functor $T$ is \emph{reduced} if $\mathfrak{N}(T)=0$, i.e.\ if
each ring $T(G/H)$ has no nonzero nilpotent elements. The \emph{reduction} of
$T$ is $T_{\mathrm{red}}\coloneqq T/\mathfrak{N}(T)$.
\end{Def}

\begin{Prop}\label{prop:reduction-homeomorphism}
The projection $\pi\colon T\twoheadrightarrow T_{\mathrm{red}}$ induces an
isomorphism of frames $\RadId_G(T_{\mathrm{red}})\cong\RadId_G(T)$ and a
homeomorphism $\Spec_{\Nak}(T_{\mathrm{red}})\cong\Spec_{\Nak}(T)$.
\end{Prop}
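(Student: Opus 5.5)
This is a direct specialization of \Cref{prop:closed-immersion} to the ideal $I=\mathfrak{N}(T)=\sqrt{0}$. That proposition gives an isomorphism of frames
\[
\RadId_G(T_{\mathrm{red}})=\RadId_G(T/\mathfrak{N}(T))\cong{\uparrow}\sqrt{\mathfrak{N}(T)},
\]
induced by $\bar J\mapsto\pi^{-1}(\bar J)$. It also gives a homeomorphism $\Spec_{\Nak}(T_{\mathrm{red}})\cong V(\mathfrak{N}(T))$. So the plan is to show that both the up-set and the closed set are everything.

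\emph{Step 1: the up-set is all of $\RadId_G(T)$.} I would first note that $\sqrt{\mathfrak{N}(T)}=\mathfrak{N}(T)$. This holds because $\sqrt{(-)}$ is idempotent: $\sqrt{I}$ is a radical Tambara ideal by \Cref{prop:levelwise-radical}, or, levelwise, the radical of a ring's nilradical is the nilradical. Next, every $J\in\RadId_G(T)$ contains $0$, hence $\mathfrak{N}(T)=\sqrt{0}\subseteq\sqrt{J}=J$. Thus $\mathfrak{N}(T)$ is the bottom element of $\RadId_G(T)$, and ${\uparrow}\mathfrak{N}(T)=\RadId_G(T)$. The frame isomorphism of \Cref{prop:closed-immersion} is therefore literally $\RadId_G(T_{\mathrm{red}})\cong\RadId_G(T)$, and it is induced by $\pi$ via preimage, as claimed.

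\emph{Step 2: the homeomorphism.} Every prime $P$ is radical (\Cref{thm:points-primes} identifies primes with meet-prime elements of $\RadId_G(T)$), so it contains the bottom element $\mathfrak{N}(T)$. Alternatively, this follows from $\mathfrak{N}(T)=\bigcap_P P$ in \Cref{def:nilradical}. Hence $V(\mathfrak{N}(T))=\Spec_{\Nak}(T)$, and \Cref{prop:closed-immersion} yields $\Spec_{\Nak}(T_{\mathrm{red}})\cong\Spec_{\Nak}(T)$. Equivalently, one can apply $\pt$ to the frame isomorphism from Step 1 and invoke \Cref{thm:points-primes} on both sides.

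To confirm that this homeomorphism is the one induced by $\pi$, I would appeal to \Cref{prop:points-are-preimages}, which shows it is $Q\mapsto\pi^{-1}(Q)$, i.e.\ $\pi^a$. There is no real obstacle here. The only points needing care are that $\mathfrak{N}(T)$ is radical, so that it is the bottom of the frame, and that the identification in \Cref{prop:closed-immersion} is compatible with $\pi^a$; both are immediate from the results cited.
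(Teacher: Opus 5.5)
Your proof is correct and takes the paper's route: you specialize \Cref{prop:closed-immersion} to $I=\mathfrak{N}(T)$ after showing ${\uparrow}\mathfrak{N}(T)=\RadId_G(T)$. The differences are minor: you get $\mathfrak{N}(T)\subseteq J$ from monotonicity of $\sqrt{(-)}$ applied to $0\subseteq J$, where the paper uses $J=\bigcap_{P\supseteq J}P\supseteq\bigcap_P P$ from \Cref{prop:levelwise-radical}, and your check via \Cref{prop:points-are-preimages} that the homeomorphism is $\pi^a$ makes explicit what the paper leaves implicit.
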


\begin{proof}
Every radical Tambara ideal $I$ contains $\mathfrak{N}(T)$. Indeed, by
\Cref{prop:levelwise-radical},
\[
I=\sqrt{I}=\bigcap_{P\supseteq I}P\supseteq \bigcap_{\text{all }P}P
=\mathfrak{N}(T).
\]
Thus $\RadId_G(T)={\uparrow}\mathfrak{N}(T)$, and the result follows from
\Cref{prop:closed-immersion} applied to $I=\mathfrak{N}(T)$.
\end{proof}

\begin{Rem}
This recovers \cite[Corollary~4.13]{Nakaoka2012Ideals}. From the frame-theoretic
perspective, passage to the reduction is invisible; every radical Tambara ideal
contains $\mathfrak{N}(T)$, so the frame $\RadId_G(T)$ depends only on
$T_{\mathrm{red}}$.
\end{Rem}

\begin{Prop}[The Chinese remainder theorem]\label{prop:coprime-decomposition}
Let $I,J\in\RadId_G(T)$ be radical Tambara ideals with $I\vee J=T$ and
$I\wedge J=\mathfrak{N}(T)$. Equivalently, for every $H\le G$,
$I(G/H)+J(G/H)=T(G/H)$ and
$I(G/H)\cap J(G/H)=\mathrm{nil}(T(G/H))$.
Then:
\begin{enumerate}
\item[\textup{(i)}] $I$ is a complemented element of $\RadId_G(T)$, with
complement $J$.
\item[\textup{(ii)}] There is a homeomorphism
$\Spec_{\Nak}(T)\cong V(I)\sqcup V(J)$,
where $V(I)$ and $V(J)$ are clopen.
\item[\textup{(iii)}] There is an isomorphism of Tambara functors
$T_{\mathrm{red}}\cong (T/I)\times (T/J)$.
\end{enumerate}
\end{Prop}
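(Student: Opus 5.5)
I will first justify the ``equivalently'' clause, then prove (i), (ii), (iii) in turn, working levelwise wherever possible.

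For the equivalence, recall that by \Cref{prop:radid-frame} the join is $I\vee J=\sqrt{I+J}$, and by \Cref{prop:levelwise-radical} this is computed levelwise. So $I\vee J=T$ if and only if $1\in\sqrt{I(G/H)+J(G/H)}$ for all $H$, which happens if and only if $I(G/H)+J(G/H)=T(G/H)$ for all $H$. For meets, $I\wedge J=I\cap J$ levelwise, and $\mathfrak{N}(T)(G/H)=\mathrm{nil}(T(G/H))$ by \Cref{def:nilradical}, so the meet condition is exactly the levelwise one.

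Part (i) is then immediate. In $\RadId_G(T)$ the bottom element is $\mathfrak{N}(T)$ and the top is $T$, so $J$ is by definition a complement of $I$. Complements in a distributive lattice are unique.

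For (ii), I pass through \Cref{cor:frame-opens}, under which $I$ corresponds to the open set $D_I=\{P\mid I\not\subseteq P\}$. The frame isomorphism turns $I\vee J=\top$ and $I\wedge J=\bot$ into $D_I\cup D_J=\Spec_{\Nak}(T)$ and $D_I\cap D_J=\varnothing$. Hence $D_I$ and $D_J$ are complementary opens, so both are clopen. Moreover $V(I)$ is the complement of $D_I$, which is $D_J$, and likewise $V(J)=D_I$. This gives the topological decomposition $\Spec_{\Nak}(T)=V(I)\sqcup V(J)$ into clopen pieces. Each piece is $\Spec_{\Nak}(T/I)$ or $\Spec_{\Nak}(T/J)$ by \Cref{prop:closed-immersion}, if one wants that form.

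For (iii), I consider the map $T\to (T/I)\times(T/J)$, taken levelwise with the product Tambara structure. It is a Tambara morphism, because each projection is and products of Tambara functors are computed levelwise. Its levelwise kernel is $I\cap J=\mathfrak{N}(T)$, so it factors through an injective morphism $T_{\mathrm{red}}\to(T/I)\times(T/J)$. Surjectivity can be checked levelwise, since a bijective morphism of Tambara functors is an isomorphism (the inverse automatically commutes with all structure maps). At each level we have $I(G/H)+J(G/H)=T(G/H)$, so the ring-theoretic Chinese remainder theorem gives surjectivity of $T(G/H)\to T(G/H)/I(G/H)\times T(G/H)/J(G/H)$.

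The main point to check carefully is that quotients $T/I$ are levelwise $T(G/H)/I(G/H)$ and that products of Tambara functors are levelwise. The first is the standard construction in \cite{Nakaoka2012Ideals}; the second holds because norms on a product act componentwise.
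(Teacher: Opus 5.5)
Your proposal is correct. For the equivalence and (i) it matches the paper; you read off meets as levelwise intersections directly from \Cref{prop:radid-frame}, where the paper routes this through \Cref{prop:radical-product-intersection-general}.

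For (ii) the paper argues with primes. No prime lies in $V(I)\cap V(J)$, since it would contain $\sqrt{I+J}=T$. Every prime lies in $V(I)\cup V(J)$, because it contains $\mathfrak{N}(T)=I\wedge J$ and is meet-prime. You instead transport $I\vee J=\top$ and $I\wedge J=\bot$ along $I\mapsto D_I$ and read off $V(I)=D_J$ and $V(J)=D_I$. This is the same content expressed on opens. It only uses that $I\mapsto U(I)$ is a frame homomorphism, so spatiality is not actually needed.

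The genuine difference is in (iii). The paper cites Nakaoka's Corollary~3.19, which for a coprime pair gives $IJ=I\cap J$ and $T/(IJ)\cong(T/I)\times(T/J)$. You prove the isomorphism directly:
\begin{enumerate}
\item the canonical morphism $T\to(T/I)\times(T/J)$ has levelwise kernel $I\cap J=\mathfrak{N}(T)$, so it descends to a levelwise injection from $T_{\mathrm{red}}$;
\item this injection is levelwise surjective by the ring-theoretic Chinese remainder theorem;
\item a levelwise bijective Tambara morphism is an isomorphism.
\end{enumerate}
Your route is self-contained and never touches the Tambara product $IJ$. That product is not computed levelwise and is the delicate part of Nakaoka's statement. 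Your only inputs are that quotients and products of Tambara functors are formed levelwise, which you correctly flag as the points to check. Nakaoka's result also yields the identity $IJ=I\cap J$, but (iii) does not need it.
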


\begin{proof}
First note that the levelwise formulation in the statement is equivalent to the
frame-theoretic one. Since $I$ and $J$ are radical,
\Cref{prop:radical-product-intersection-general} gives
$I\wedge J=I\cap J$, so
\[
I\wedge J=\mathfrak{N}(T)
\iff
I(G/H)\cap J(G/H)=\mathrm{nil}(T(G/H))
\quad\text{for all }H\le G.
\]
Also, $I\vee J=T$ means $\sqrt{I+J}=T$. By \Cref{prop:levelwise-radical}, this
is equivalent to
\[
\sqrt{I(G/H)+J(G/H)}=T(G/H)
\quad\text{for all }H\le G,
\]
which in turn is equivalent to
\[
I(G/H)+J(G/H)=T(G/H)
\quad\text{for all }H\le G.
\]

For (i), note that $\mathfrak{N}(T)$ is the bottom element of $\RadId_G(T)$, so
the assumptions say exactly that $I\vee J=\top$ and $I\wedge J=\bot$. Thus $I$
and $J$ are complements.

For (ii), a complemented element in a spatial frame determines a clopen
decomposition of the point space. Explicitly, $V(I)$ and $V(J)$ are closed by
definition. They are disjoint since $I\vee J=T$, so no prime can contain both.
They cover $\Spec_{\Nak}(T)$ because every prime contains
$\mathfrak{N}(T)=I\wedge J$, and meet-primality then forces it to contain
$I$ or $J$.

For (iii), by the equivalence just established, $I+J=T$. Thus $I$ and $J$ are
coprime in the sense of \cite[Section~3]{Nakaoka2012Ideals}. By
\cite[Corollary~3.19]{Nakaoka2012Ideals}, $IJ=I\cap J=\mathfrak{N}(T)$, and
there is an isomorphism of $G$-Tambara functors
\[
T_{\mathrm{red}}=T/\mathfrak{N}(T)=T/(IJ)\xrightarrow{\cong}
(T/I)\times(T/J).\qedhere
\]
\end{proof}

\begin{Rem}
\Cref{prop:coprime-decomposition} is essentially a repackaging of
\cite[Corollary~3.19]{Nakaoka2012Ideals} from the frame-theoretic perspective.
Nakaoka's result applies to any coprime pair of ideals and produces
$T/(I\cap J)\cong (T/I)\times(T/J)$; the additional hypothesis
$I\wedge J=\mathfrak{N}(T)$ is what replaces $T/(I\cap J)$ by
$T_{\mathrm{red}}$ and turns the statement into one about complemented
elements of the frame $\RadId_G(T)$.
\end{Rem}

\begin{Cor}\label{cor:connectedness}
The following are equivalent:
\begin{enumerate}
\item[\textup{(i)}] $\Spec_{\Nak}(T)$ is disconnected.
\item[\textup{(ii)}] $\RadId_G(T)$ has a nontrivial complemented element.
\item[\textup{(iii)}] There exist coprime radical Tambara ideals
$I,J\trianglelefteq T_{\mathrm{red}}$ with $I\cap J=0$ and $I,J\ne 0$.
\item[\textup{(iv)}] There exist nontrivial $G$-Tambara functors $T_1,T_2$
with $T_{\mathrm{red}}\cong T_1\times T_2$.
\end{enumerate}
\end{Cor}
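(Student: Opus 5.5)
I will prove the cycle (i)$\Leftrightarrow$(ii), (ii)$\Leftrightarrow$(iii), and (iii)$\Leftrightarrow$(iv), working throughout via the identification $\RadId_G(T)\cong\Omega(\Spec_{\Nak}(T))$ of \Cref{cor:frame-opens}.

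For (i)$\Leftrightarrow$(ii), I use that under this isomorphism complemented elements of the frame correspond exactly to clopen subsets of $\Spec_{\Nak}(T)$. In any frame of opens $\Omega(X)$, an open $U$ has a complement precisely when $X\setminus U$ is open. A nontrivial complemented element, meaning one different from $\bot=\mathfrak{N}(T)$ and $\top=T$, is then a clopen set other than $\varnothing$ and $X$. This is exactly disconnectedness.

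For (ii)$\Leftrightarrow$(iii), I pass to $T_{\mathrm{red}}$ using \Cref{prop:reduction-homeomorphism}, which gives $\RadId_G(T)\cong\RadId_G(T_{\mathrm{red}})$. In $T_{\mathrm{red}}$ the bottom element is $\mathfrak{N}(T_{\mathrm{red}})=0$. A complemented pair $I,J$ in $\RadId_G(T_{\mathrm{red}})$ satisfies $I\wedge J=0$ and $I\vee J=\top$. As in the proof of \Cref{prop:coprime-decomposition}, the condition $I\vee J=\top$ is equivalent levelwise to $I+J=T_{\mathrm{red}}$, i.e. coprimality, and $I\wedge J=I\cap J$. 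Nontriviality of $I$ gives $I\ne 0$, and $I\ne\top$ forces $J\ne 0$. Conversely, given coprime radical $I,J\ne 0$ with $I\cap J=0$, they are complements. Neither is $\top$, since $J=\top$ would force $I=I\cap J=0$. The main thing to check here is that the isomorphism of \Cref{prop:closed-immersion} sends bottom to bottom and top to top, which is clear because it is a frame isomorphism.

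For (iii)$\Rightarrow$(iv), I apply \Cref{prop:coprime-decomposition}(iii) to $T_{\mathrm{red}}$, which is reduced, so $(T_{\mathrm{red}})_{\mathrm{red}}=T_{\mathrm{red}}$. This gives $T_{\mathrm{red}}\cong (T_{\mathrm{red}}/I)\times(T_{\mathrm{red}}/J)$. Each factor is nonzero: if $T_{\mathrm{red}}/I=0$ then $I=T_{\mathrm{red}}$, so $J=I\cap J=0$, contradicting $J\ne 0$. For (iv)$\Rightarrow$(iii), let $T_{\mathrm{red}}\cong T_1\times T_2$ and take $I=0\times T_2$ and $J=T_1\times 0$, computed levelwise.

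The step I expect to be the main obstacle is verifying that these levelwise product ideals are genuinely Tambara ideals. Closure under restriction, transfer and conjugation is immediate. Norms need more care, because $N^H_K$ is not additive. Still, the product Tambara functor has componentwise norms, so $N(0,y)=(N(0),N(y))$, and we need $N^H_K(0)=0$ in $T_1$. This fails in general, since for instance the Burnside functor has $N(0)\neq0$. So I would instead invoke Nakaoka's treatment of products, in the framework of \cite[Section~3]{Nakaoka2012Ideals}, for the fact that the kernels of the two projections $T_1\times T_2\to T_i$ are Tambara ideals; the kernel of a morphism of Tambara functors is an ideal.

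With that in hand, $I\cap J=0$ and $I+J$ contains $(1,1)$ at each level, so $I$ and $J$ are coprime. They are radical because each quotient $T_{\mathrm{red}}/I\cong T_1$ is reduced, being a factor of a reduced functor, and they are nonzero because $T_1,T_2\neq 0$.
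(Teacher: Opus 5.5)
Your proposal follows the paper's proof essentially step for step. It gets (i)$\Leftrightarrow$(ii) from clopen subsets of $\Omega(\Spec_{\Nak}(T))$, (ii)$\Leftrightarrow$(iii) by passing to $T_{\mathrm{red}}$ and translating $I\vee J=\top$ levelwise into $I+J=T_{\mathrm{red}}$, and (iii)$\Leftrightarrow$(iv) from \Cref{prop:coprime-decomposition}(iii) and the kernels of the two projections. Your explicit checks that the complements are nontrivial and that $T_{\mathrm{red}}/I$ and $T_{\mathrm{red}}/J$ are nonzero fill in details the paper leaves implicit.

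The one paragraph you should discard is the ``main obstacle''.

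\textbf{The claim is false.} $N^H_K(0)=0$ never fails, and it does not fail for the Burnside functor: \Cref{exa:burnside} gives $N^{C_2}_e(0)=0+\tfrac{0^2-0}{2}t=0$. For $K\le H$ the norm is taken along the surjection $G/K\to G/H$, and $N^H_K(0)=0$ holds in every Tambara functor. For $C_2$, put $a=b=0$ in \Cref{exa:C2-reciprocity} to get $N(0)=2N(0)$. In general, argue by induction on $[H:K]$ using reciprocity for sums, or note that the exponential of $\varnothing$ along a surjection is empty. Norms along non-surjective maps of finite $G$-sets, such as $\varnothing\to G/H$, need not preserve $0$. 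That is the source of Nakaoka's $N(0)$-corrected axiom, which the paper remarks is superfluous for the levelwise definition used here.

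\textbf{Your workaround contradicts the worry.} The kernel of $T_1\times T_2\to T_1$ is levelwise exactly $0\times T_2(G/H)$. Its closure under norms in the sense of (I4) is precisely the statement $N^H_K(0)=0$ in $T_1$. If that failed, ``kernels are ideals'' would fail for the paper's notion of ideal too; it would hold only for Nakaoka's weaker axiom.

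\textbf{Fix.} Replace the paragraph with the one-line check $N^H_K(0,y)=(N^H_K(0),N^H_K(y))=(0,N^H_K(y))$. With that change your proof is complete and matches the paper's.
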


\begin{proof}
(i)$\Leftrightarrow$(ii): By \Cref{cor:frame-opens},
$\RadId_G(T)\cong \Omega(\Spec_{\Nak}(T))$. A complemented element of
$\Omega(\Spec_{\Nak}(T))$ is exactly a clopen subset. To see this, note if $U$ has complement $V$, then $U\cap V=\varnothing$ and $U\cup V=\Spec_{\Nak}(T)$, so $U$ is clopen.
Conversely, a clopen subset has open complement and is therefore complemented.
Thus $\Spec_{\Nak}(T)$ is disconnected if and only if $\RadId_G(T)$ has a nontrivial
complemented element.

(ii)$\Leftrightarrow$(iii): By \Cref{prop:reduction-homeomorphism}, we may
replace $T$ by $T_{\mathrm{red}}$. Since $T_{\mathrm{red}}$ is reduced, its
nilradical is zero, so the bottom element of $\RadId_G(T_{\mathrm{red}})$ is
$0$. A pair of nontrivial complements is exactly a pair of radical Tambara
ideals $I,J$ with $I\vee J=T_{\mathrm{red}}$, $I\wedge J=0$, and $I,J\neq 0$.
By \Cref{prop:radical-product-intersection-general}, $I\wedge J=I\cap J$ for
radical ideals, so $I\wedge J=0$ is equivalent to $I\cap J=0$, and
$I\vee J=T_{\mathrm{red}}$ is equivalent to $I+J=T_{\mathrm{red}}$ by the same
argument as in \Cref{prop:coprime-decomposition}, so $I$ and $J$ are coprime.

(iii)$\Leftrightarrow$(iv): The forward direction follows from
\Cref{prop:coprime-decomposition}(iii). For the converse, suppose
$T_{\mathrm{red}}\cong T_1\times T_2$ with $T_1,T_2\ne 0$. Let
$I=\ker(T_{\mathrm{red}}\twoheadrightarrow T_1)$ and
$J=\ker(T_{\mathrm{red}}\twoheadrightarrow T_2)$; at each level $G/H$ these
are respectively $0\times T_2(G/H)$ and $T_1(G/H)\times 0$ in
$T_1(G/H)\times T_2(G/H)$. They are Tambara ideals with $I\cap J=0$ and
$I+J=T_{\mathrm{red}}$. Since $T_{\mathrm{red}}$ is reduced, each $T_i(G/H)$
is reduced, and therefore each $I(G/H)$ and $J(G/H)$ is a radical ring ideal;
by \Cref{prop:levelwise-radical}, $I$ and $J$ are radical Tambara ideals.
They are nonzero since $T_1,T_2\ne 0$.
\end{proof}

\begin{Rem}
\Cref{cor:connectedness} recovers part of
\cite[Proposition~4.15]{Nakaoka2012Ideals}. The equivalence
(i)$\Leftrightarrow$(ii) is purely frame-theoretic and avoids any direct
prime-ideal argument.
\end{Rem}

\bibliographystyle{alpha}
\bibliography{bib}

\end{document}